\newtheorem{proposition}{Proposition}[section]
\newtheorem{theorem}{Theorem}[section]
\newtheorem{corollary}{Corollary}[section]
\theoremstyle{definition}
\newtheorem{definition}{Definition}[section]
\newtheorem{remark}{Remark}[section]
\newtheorem{example}{Example}[section]
\newtheorem{conjecture}{Conjecture}[section]
\numberwithin{equation}{section}
\begin{document}


\title[Generalized Artin pattern and Scholz's conjecture]{Generalized Artin pattern of heterogeneous multiplets \\
of dihedral fields and proof of Scholz's conjecture}

\author{Daniel C. Mayer}
\address{Naglergasse 53\\8010 Graz\\Austria}
\email{algebraic.number.theory@algebra.at}
\urladdr{http://www.algebra.at}

\thanks{Research supported by the Austrian Science Fund (FWF): projects J 0497-PHY and P 26008-N25}

\subjclass[2000]{Primary 11R11, 11R16, 11R20, 11R27, 11R29, secondary 11R37, 11Y40}
\keywords{Quadratic \(p\)-ring class groups, real quadratic fields, totally real cubic fields, dihedral fields,
discriminants, conductors, multiplicity, Galois cohomology of unit groups,
differential principal factorization types, capitulation types, \(p\)-class group structure,
\(p\)-ring class fields mod \(f\), ramified abelian extensions}

\date{April 12, 2019}


\begin{abstract}
The concept of Artin transfer pattern \(((\ker(T_{K,N_i}))_i,(\mathrm{Cl}_p(N_i))_i)\)
for homogeneous multiplets \((N_1,\ldots,N_m)\)
of unramified cyclic prime degree \(p\) extensions \(N_i/K\) of a base field \(K\)
with \(p\)-class transfer homomorphisms \(T_{K,N_i}:\,\mathrm{Cl}_p(K)\to\mathrm{Cl}_p(N_i)\)
is generalized for heterogeneous multiplets of ramified extensions.
By application to quadratic subfields \(K\)
of dihedral fields \(N\) of degree \(2p\) with an odd prime \(p\),
a conjecture of Scholz
concerning the index of subfield units, \((U_N:U_0)\), 
for ramified extensions \(N/K\) with conductor \(f>1\)
is verified computationally.
\end{abstract}

\maketitle


\section{Introduction}
\label{s:Intro}
\noindent
The concept of \textit{Artin pattern}
was introduced by ourselves
\cite{Ma2016}
in a new technique
for finding the metabelian Galois group \(\mathrm{G}_p^2(K)=\mathrm{Gal}(\mathrm{F}_p^2(K)/K)\)
of the second Hilbert \(p\)-class field \(\mathrm{F}_p^2(K)\)
of a number field \(K\) with non-cyclic \(p\)-class group
\cite{Ma2012},
for an arbitrary prime \(p\ge 2\).
The strategy of \textit{pattern recognition via Artin transfers}
\cite{Ma2011}
is a method for using number theoretic information
on the \textit{unramified} cyclic extensions \(N/K\) of relative degree \(p\)
in a group theoretic search for \(G=\mathrm{G}_p^2(K)\)
with the aid of the \(p\)-group generation algorithm
\cite{Nm,Ob}.
According to the Artin reciprocity law of class field theory
\cite{Ar1927},
the kernels and targets of the \(p\)-class extension homomorphisms
\(T_{K,N}:\,\mathrm{Cl}_p(K)\to\mathrm{Cl}_p(N)\),
\(\mathfrak{a}\cdot\mathcal{P}_K\mapsto\mathfrak{a}\mathcal{O}_N\cdot\mathcal{P}_N\),
correspond to the kernels and targets of the \textit{transfer homomorphisms}
\(T_{G,H}:\,G/G^\prime\to H/H^\prime\)
from \(G\) to its maximal subgroups \(H<G\) of index \(p\)
\cite{Ar1929,Ma2010}.
In
\cite{Ma2016}
we defined the \textit{Artin pattern} of \(K\) as the collection
\(\mathrm{AP}(K)=(\varkappa(K),\tau(K))\) of kernels 
\(\varkappa(K)=(\ker(T_{K,N}))_N\),
and targets
\(\tau(K)=(\mathrm{Cl}_p(N))_N\) of \(p\)-class extensions,
where \(N\) varies over all
unramified cyclic extensions \(N/K\) of relative degree \(p\).

In the present article, our principal aim is to use \textit{extended Artin patterns}
for a \textit{systematic classification} of
\textit{ramified} cyclic extensions \(N/K\) of odd prime degree \(p\)
over quadratic or cyclotomic fields \(K\)
with the aid of transfer data and principal factorization types,
rather than for finding a suitable automorphism group.

The precise definition of \textit{homogeneous} and \textit{heterogeneous multiplets}
of dihedral fields is given in \S\
\ref{s:Multiplets}
using concepts of the multiplicity theory of dihedral discriminants
\cite{Ma1992,Ma2014}.

After the illustration of two important special situations involving heterogeneous multiplets,
which are crucial for verifying the truth of several hypotheses of Scholz in
\cite{So},
\S\
\ref{s:Pattern}
introduces the \textit{generalized Artin pattern} of a heterogeneous multiplet.
In \S\
\ref{s:ObjectsInvariants}
we distinguish between multiplets of \textit{objects}
and associated multiplets of \textit{invariants}.

For obtaining deeper structural insight,
we sort the components of the Artin pattern
by \textit{differential principal factorization} (DPF) \textit{types}
in \S\
\ref{s:Trichotomy},
where we prove that the \(\mathbb{F}_p\)-vector space \(\mathcal{P}_{N/K}/\mathcal{P}_K\)
of primitive ambiguous principal ideals of a number field extension \(N/K\)
can be endowed with a natural trichotomic direct product structure.

Finally,
as an application of the notions of multiplets, Artin patterns and DPF types,
the Conjecture of Scholz
\cite{So}
is stated, refined, and proved completely in \S\
\ref{s:ScholzConjecture}.


\section{Homogeneous and heterogeneous multiplets of dihedral fields}
\label{s:Multiplets}
\noindent
In the sequel we fix an \textit{odd} prime number \(p\ge 3\).
\begin{definition}
\label{dfn:Multiplets}
\noindent
Let \((N_1,\ldots,N_m)\) be a multiplet of \(m\) pairwise non-isomorphic
dihedral fields of degree \(2p\)
which share a common quadratic subfield \(K<N_i\) \((1\le i\le m)\)
with discriminant \(d_K\).
\begin{enumerate}
\item
The multiplet \((N_1,\ldots,N_m)\) is called \textit{homogeneous} if
\begin{enumerate}
\item
all members share a common class field theoretic conductor \(f=f(N_i/K)\), and
\item
the number \(m=m_p(d_K,f)\) is the \(p\)-multiplicity of \(f\) with respect to \(K\)
\cite{Ma1991,Ma2014}.
\end{enumerate}
\item
The multiplet \((N_1,\ldots,N_m)\) is called \textit{heterogeneous} if
\begin{enumerate}
\item
all members possess conductors \(f_i=f(N_i/K)\)
which are divisors of an assigned \(p\)-admissible conductor \(f_0\) for \(K\)
\cite{Ma2014},
and
\item
the number \(m=\sum_{f\mid f_0}\,m_p(d_K,f)\) is the sum of all \(p\)-multiplicities
of divisors \(f\) of \(f_0\) with respect to \(K\).
\end{enumerate}
\end{enumerate}
\end{definition}


\section{Generalized Artin pattern}
\label{s:Pattern}
\noindent
Since the extended Artin pattern of a composite conductor
has a structure with high complexity,
we abstain from maximal possible generality
and restrict ourselves to the simplest non-trivial situation
where \(f_0=q\in\mathbb{P}\) is a \(p\)-admissible \textit{prime conductor}
for a quadratic field \(K\) with \(p\)-class rank
\(\varrho_p=\mathrm{rank}_p(\mathrm{Cl}(K))\ge 1\),
that is, \(K\) has class number divisible by \(p\).
Then \(f_0\) has only two divisors,
\(f=1\) corresponding to \textit{unramified} abelian extensions \(N_{1,i}/K\), \(1\le i\le m_p(d_K,1)\), and
\(f=q\) corresponding to \textit{ramified} abelian extensions \(N_{q,j}/K\), \(1\le j\le m_p(d_K,q)\).


\begin{example}
\label{exm:Multiplet}
\noindent
Before we give an abstract definition of the extended Artin pattern,
we put \(p=3\) the smallest odd prime,
and we start with two illuminating examples for heterogeneous multiplets.
\begin{itemize}
\item
Suppose \(K\) has minimal positive \(3\)-class rank \(\varrho_3=1\),
then a heterogeneous multiplet with \textit{free} prime conductor \(q\),
having \(p\)-defect \(\delta_p(q)=0\)
\cite{Ma2014},
possesses
\begin{equation}
\label{eqn:Rank1}
m=m_3(d_K,1)+m_3(d_K,q)=\frac{1}{2}(3^{\varrho_3+1}-1)=\frac{1}{2}(3^{1+1}-1)=\frac{1}{2}(9-1)=4
\end{equation}
members,
being a \textit{quartet}, and
consists of an unramified homogeneous \textit{singulet} \(N_{1,1}\),
since \(m_3(d_K,1)=\frac{1}{2}(3^{\varrho_3}-1)=\frac{1}{2}(3^{1}-1)=1\),
and a ramified homogeneous \textit{triplet} \((N_{q,1},N_{q,2},N_{q,3})\),
since \(m_3(d_K,q)=4-1=3\).
The situation is illustrated in Figure
\ref{fig:Quartet}.
\item
Assume that \(K\) has \(3\)-class rank \(\varrho_3=2\)
and \(q\) is free with \(p\)-defect \(\delta_p(q)=0\),
then a heterogeneous multiplet with conductor \(q\)
possesses
\begin{equation}
\label{eqn:Rank2}
m=m_3(d_K,1)+m_3(d_K,q)=\frac{1}{2}(3^{\varrho_3+1}-1)=\frac{1}{2}(3^{2+1}-1)=\frac{1}{2}(27-1)=13
\end{equation}
members,
being a \textit{tridecuplet}, and
consists of an unramified homogeneous \textit{quartet} \((N_{1,1},\ldots,\) \(N_{1,4})\),
since \(m_3(d_K,1)=\frac{1}{2}(3^{\varrho_3}-1)=\frac{1}{2}(3^{2}-1)=4\),
and a ramified homogeneous \textit{nonet} \((N_{q,1},\ldots,N_{q,9})\),
since \(m_3(d_K,q)=13-4=9\).
The situation is illustrated in Figure
\ref{fig:Tridecuplet}.
\end{itemize}
\end{example}


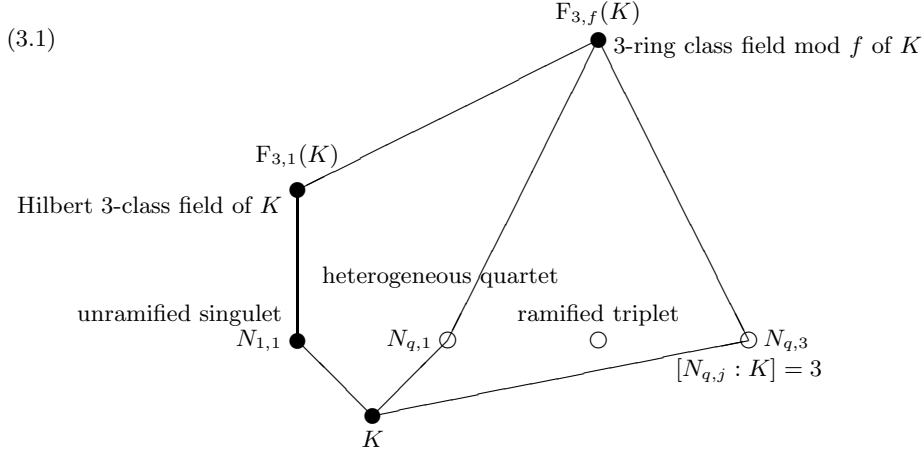
\begin{figure}[ht]
\caption{Heterogeneous Quartet over \(K\)}
\label{fig:Quartet}

{\small

\setlength{\unitlength}{1.0cm}
\begin{picture}(10,6)(-8,-9)



\put(-5,-9){\circle*{0.2}}
\put(-5,-9.2){\makebox(0,0)[ct]{\(K\)}}

\put(-5,-9){\line(-1,1){1}}
\put(-5,-9){\line(1,1){1}}
\put(-5,-9){\line(5,1){5}}

\put(-4.1,-7.3){\makebox(0,0)[cb]{heterogeneous quartet}}
\put(-6.2,-7.8){\makebox(0,0)[rb]{unramified singulet}}
\put(-6,-8){\circle*{0.2}}
\put(-6.2,-8){\makebox(0,0)[rc]{\(N_{1,1}\)}}

\put(-6,-8){\line(0,1){2}}

\put(-9.2,-4){\makebox(0,0)[rc]{(3.1)}}
\put(-6,-6){\circle*{0.2}}
\put(-6,-5.7){\makebox(0,0)[cb]{\(\mathrm{F}_{3,1}(K)\)}}
\put(-6.2,-6.2){\makebox(0,0)[rc]{Hilbert \(3\)-class field of \(K\)}}



\put(0,-8.4){\makebox(0,0)[cc]{\(\lbrack N_{q,j}:K\rbrack=3\)}}
\put(-2,-4){\line(-2,-1){4}}
\put(-2,-4){\line(-1,-2){2}}
\put(-2,-4){\line(1,-2){2}}

\put(-2,-7.8){\makebox(0,0)[cb]{ramified triplet}}
\put(-4,-8){\circle{0.2}}
\put(-2,-8){\circle{0.2}}
\put(0,-8){\circle{0.2}}
\put(-4.2,-8){\makebox(0,0)[rc]{\(N_{q,1}\)}}
\put(0.2,-8){\makebox(0,0)[lc]{\(N_{q,3}\)}}

\put(-2,-4){\circle*{0.2}}
\put(-2,-3.8){\makebox(0,0)[cb]{\(\mathrm{F}_{3,f}(K)\)}}
\put(-1.8,-4.1){\makebox(0,0)[lc]{\(3\)-ring class field mod \(f\) of \(K\)}}


\end{picture}

}

\end{figure}


\begin{figure}[ht]
\caption{Heterogeneous Tridecuplet over \(K\)}
\label{fig:Tridecuplet}

{\small

\setlength{\unitlength}{1.0cm}
\begin{picture}(10,7)(-8,-9)



\put(-5,-9){\circle*{0.2}}
\put(-5,-9.2){\makebox(0,0)[ct]{\(K\)}}

\put(-5,-9){\line(-5,1){5}}
\put(-5,-9){\line(-1,1){1}}
\put(-5,-9){\line(1,1){1}}
\put(-5,-9){\line(5,1){5}}

\put(-10,-8){\line(2,1){2}}
\put(-6,-8){\line(-2,1){2}}

\put(-5.5,-7.3){\makebox(0,0)[cb]{heterogeneous tridecuplet}}
\put(-8,-7.9){\makebox(0,0)[cb]{unramified quartet}}
\put(-10,-8){\circle*{0.2}}
\put(-8.7,-8){\circle*{0.2}}
\put(-7.3,-8){\circle*{0.2}}
\put(-6,-8){\circle*{0.2}}
\put(-10.2,-8){\makebox(0,0)[rc]{\(N_{1,1}\)}}
\put(-5.8,-8){\makebox(0,0)[lc]{\(N_{1,4}\)}}

\put(-8,-7){\circle*{0.2}}

\put(-8,-7){\line(0,1){2}}

\put(-9.2,-3){\makebox(0,0)[rc]{(3.2)}}
\put(-8,-5){\circle*{0.2}}
\put(-8,-4.8){\makebox(0,0)[cb]{\(\mathrm{F}_{3,1}(K)\)}}
\put(-7.8,-5.2){\makebox(0,0)[lc]{Hilbert \(3\)-class field of \(K\)}}



\put(0,-8.4){\makebox(0,0)[cc]{\(\lbrack N_{q,j}:K\rbrack=3\)}}
\put(0,-3){\line(-4,-1){8}}
\put(0,-3){\line(-4,-5){4}}
\put(0,-3){\line(0,-1){5}}

\put(-2,-7.8){\makebox(0,0)[cb]{ramified nonet}}
\put(-4,-8){\circle{0.2}}
\put(-3.5,-8){\circle{0.2}}
\put(-3,-8){\circle{0.2}}
\put(-2.5,-8){\circle{0.2}}
\put(-2,-8){\circle{0.2}}
\put(-1.5,-8){\circle{0.2}}
\put(-1,-8){\circle{0.2}}
\put(-0.5,-8){\circle{0.2}}
\put(0,-8){\circle{0.2}}
\put(-4.2,-8){\makebox(0,0)[rc]{\(N_{q,1}\)}}
\put(0.2,-8){\makebox(0,0)[lc]{\(N_{q,9}\)}}

\put(0,-3){\circle*{0.2}}
\put(0,-2.8){\makebox(0,0)[cb]{\(\mathrm{F}_{3,f}(K)\)}}
\put(0.2,-3.1){\makebox(0,0)[lc]{\(3\)-ring class field mod \(f\) of \(K\)}}


\end{picture}

}

\end{figure}
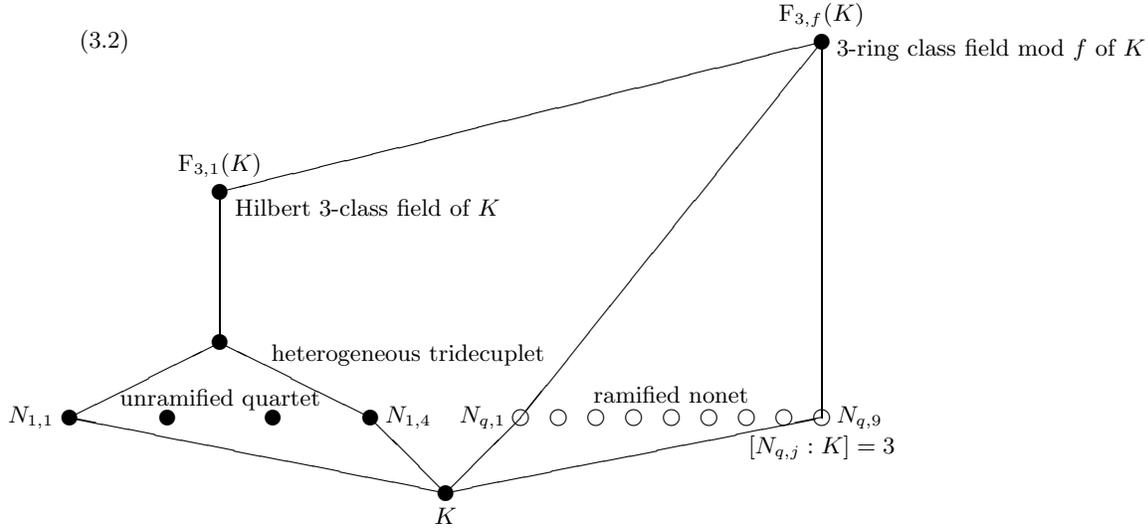


\begin{definition}
\label{dfn:Pattern}
The \textit{generalized} or \textit{extended Artin pattern} \(\mathrm{AP}(\mathbf{M})\)
of the heterogeneous multiplet \(\mathbf{M}=((N_{1,i})_i,(N_{q,j})_j)\)
consists of the kernels and targets of the \(p\)-class extension homomorphisms
\(T_{K,N_{1,i}}:\,\mathrm{Cl}_p(K)\to\mathrm{Cl}_p(N_{1,i})\), \(1\le i\le m(1)\), and
\(T_{K,N_{q,j}}:\,\mathrm{Cl}_p(K)\to\mathrm{Cl}_p(N_{q,j})\), \(1\le j\le m(q)\),
where we abbreviate \(m(f)=m_p(d_K,f)\), since \(p\) and \(K\) are fixed. Explicitly:
\begin{equation}
\label{eqn:Pattern}
\begin{aligned}
\mathrm{AP}(\mathbf{M}) &= ((\varkappa_u(\mathbf{M}),\tau_u(\mathbf{M})),(\varkappa_r(\mathbf{M}),\tau_r(\mathbf{M}))), \text{ where} \\
\varkappa_u(\mathbf{M}) &= (\ker(T_{K,N_{1,i}}))_i,\ \tau_u(\mathbf{M})=(\mathrm{Cl}_p(N_{1,i}))_i\text{ (\(u\ldots\)unramified), and} \\
\varkappa_r(\mathbf{M}) &= (\ker(T_{K,N_{q,j}}))_j,\ \tau_r(\mathbf{M})=(\mathrm{Cl}_p(N_{q,j}))_j\text{ (\(r\ldots\)ramified)}.
\end{aligned}
\end{equation}
\end{definition}


\begin{example}
\label{exm:Rank1}
\noindent
Let \(p=3\) be the smallest odd prime,
\(K\) be a \textit{real} quadratic field with \(3\)-class rank \(\varrho_3=1\),
and \(f=2\) be a free admissible prime conductor for \(K\).
According to Formula
\eqref{eqn:Rank1}
in Example
\ref{exm:Multiplet},
\(K\) gives rise to
a singulet \(N_{1,1}\) of unramified absolute \(3\)-class fields, and
a triplet \((N_{2,1},N_{2,2},N_{2,3})\) of ramified \(3\)-ring class fields modulo \(2\).
The generalized Artin pattern of the heterogeneous multiplet
\((N_{1,1};(N_{2,j})_{j=1}^3)\) is
\((\ker(T_{K,N_{1,1}}),\mathrm{Cl}_3(N_{1,1});(\ker(T_{K,N_{2,j}}),\mathrm{Cl}_3(N_{2,j}))_{j=1}^3)\).
\end{example}


\section{Multiplets of objects and associated multiplets of invariants}
\label{s:ObjectsInvariants}
\noindent
Since we want to supplement our description of dihedral fields
with differential principal factorization (DPF) types,
additionally to the generalized Artin patterns,
we briefly explain that we shall have to deal with
two distinct kinds of closely related multiplets.
Suppose, we have a mapping,
\begin{equation}
\label{eqn:ObjectsInvariants}
I:\,\mathrm{Objects}\to\mathrm{Invariants},\ X\mapsto I(X),
\end{equation}
from certain objects to invariants of these objects,
then a \textit{multiplet of objects}, \((X_1,\ldots,X_m)\),
with some non-negative integer \(m\),
will be mapped to an \textit{associated multiplet of invariants},
\(I(X_1,\ldots,X_m)=(I(X_1),\ldots,I(X_m))\).

\begin{example}
\label{exm:ObjectsInvariants}
\noindent
In Theorem
\ref{thm:MainCubic}
we shall see that the DPF type of pure cubic fields \(L=\mathbb{Q}(\sqrt[3]{d})\)
is a mapping \(L\mapsto T(L)\in\lbrace\alpha,\beta,\gamma\rbrace\).
Therefore, a multiplet \((L_1,\ldots,L_m)\) of pure cubic fields
has an associated multiplet of DPF types \((T(L_1),\ldots,T(L_m))\hat{=}(\alpha^x,\beta^y,\gamma^z)\) with \(x+y+z=m\).
\end{example}


\section{Trichotomy of primitive ambiguous principal ideals}
\label{s:Trichotomy}
\noindent
Our intention in this section is
to supplement the generalized Artin pattern by \textit{differential principal factorization} (DPF) \textit{types}
and to establish
a common theoretical framework for
the classification
\begin{itemize}
\item
of dihedral fields \(N/\mathbb{Q}\) of degree \(2p\) with an odd prime \(p\),
viewed as \(p\)-ring class fields over a quadratic field \(K\),
and
\item
of pure metacyclic fields \(N=K(\sqrt[p]{D})\) of degree \((p-1)\cdot p\) with an odd prime \(p\),
viewed as Kummer extensions of a cyclotomic field \(K=\mathbb{Q}(\zeta_p)\),
\end{itemize}
by the following arithmetical invariants:
\begin{enumerate}
\item
the \(\mathbb{F}_p\)-dimensions of subspaces
of the space \(\mathcal{P}_{N/K}/\mathcal{P}_K\) of primitive ambiguous principal ideals,
which are also called \textit{differential principal factors}, of \(N/K\),
\item
the \textit{capitulation kernel} \(\ker(T_{N/K})\)
of \(T_{N/K}:\,\mathrm{Cl}_p(K)\to\mathrm{Cl}_p(N)\),
the transfer homomorphism of \(p\)-classes from \(K\) to \(N\), and
\item
the \textit{Galois cohomology} \(\mathrm{H}^0(G,U_N)\), \(\mathrm{H}^1(G,U_N)\) of the unit group \(U_N\)
as a module over the automorphism group \(G=\mathrm{Gal}(N/K)\simeq C_p\).
\end{enumerate}

\begin{figure}[ht]
\caption{Dihedral and Metacyclic Situation}
\label{fig:DihedralMetacyclic}

{\small

\setlength{\unitlength}{1.0cm}
\begin{picture}(12,5)(-7,-9)



\put(-6,-9){\circle*{0.2}}
\put(-6,-9.2){\makebox(0,0)[ct]{\(\mathbb{Q}\)}}
\put(-7,-9){\makebox(0,0)[rc]{rational field}}

\put(-6,-9){\line(2,1){2}}
\put(-5,-8.7){\makebox(0,0)[lt]{\(\lbrack K:\mathbb{Q}\rbrack=2\)}}

\put(-4,-8){\circle*{0.2}}
\put(-4,-8.2){\makebox(0,0)[lt]{\(K=\mathbb{Q}(\sqrt{d})\)}}
\put(-3,-8){\makebox(0,0)[lc]{quadratic field}}


\put(-6.2,-7.5){\makebox(0,0)[rc]{\(\lbrack L:\mathbb{Q}\rbrack=p\)}}
\put(-6,-9){\line(0,1){3}}
\put(-4,-8){\line(0,1){3}}
\put(-3.8,-6.5){\makebox(0,0)[lc]{cyclic extension}}



\put(-6,-6){\circle{0.2}}
\put(-6,-5.8){\makebox(0,0)[cb]{\(L\)}}
\put(-6.1,-6.2){\makebox(0,0)[rt]{\(L_{1},\ldots,L_{p-1}\)}}
\put(-7,-6){\makebox(0,0)[rc]{\(p\) conjugates}}

\put(-6,-6){\line(2,1){2}}

\put(-4,-5){\circle*{0.2}}
\put(-4,-4.8){\makebox(0,0)[cb]{\(N=L\cdot K\)}}
\put(-3,-5){\makebox(0,0)[lc]{dihedral field}}
\put(-3,-5.5){\makebox(0,0)[lc]{of degree \(2p\)}}

\put(-0.9,-9.4){\line(0,1){5}}


\put(2,-9){\circle*{0.2}}
\put(2,-9.2){\makebox(0,0)[ct]{\(\mathbb{Q}\)}}
\put(1,-9){\makebox(0,0)[rc]{rational field}}

\put(2,-9){\line(2,1){2}}
\put(3,-8.7){\makebox(0,0)[lt]{\(\lbrack K:\mathbb{Q}\rbrack=p-1\)}}
\put(3,-8.5){\circle*{0.2}}

\put(4,-8){\circle*{0.2}}
\put(4,-8.2){\makebox(0,0)[lt]{\(K=\mathbb{Q}(\zeta_p)\)}}
\put(5,-8){\makebox(0,0)[lc]{cyclotomic field}}


\put(1.8,-7.5){\makebox(0,0)[rc]{\(\lbrack L:\mathbb{Q}\rbrack=p\)}}
\put(2,-9){\line(0,1){3}}
\put(3,-6.5){\vector(0,1){0.8}}
\put(3,-6.8){\makebox(0,0)[cc]{intermediate}}
\put(3,-7.2){\makebox(0,0)[cc]{fields}}
\put(3,-7.5){\vector(0,-1){0.8}}
\put(4,-8){\line(0,1){3}}
\put(4.2,-6.5){\makebox(0,0)[lc]{Kummer extension}}



\put(2,-6){\circle{0.2}}
\put(1.8,-5.7){\makebox(0,0)[cb]{\(L=\mathbb{Q}(\sqrt[p]{D})\)}}
\put(1.9,-6.2){\makebox(0,0)[rt]{\(L_{1},\ldots,L_{p-1}\)}}
\put(1,-6){\makebox(0,0)[rc]{\(p\) conjugates}}

\put(2,-6){\line(2,1){2}}
\put(3,-5.5){\circle{0.2}}

\put(4,-5){\circle*{0.2}}
\put(4,-4.8){\makebox(0,0)[cb]{\(N=L\cdot K\)}}
\put(5,-5){\makebox(0,0)[lc]{metacyclic field}}
\put(5,-5.5){\makebox(0,0)[lc]{of degree \((p-1)p\)}}


\end{picture}

}

\end{figure}
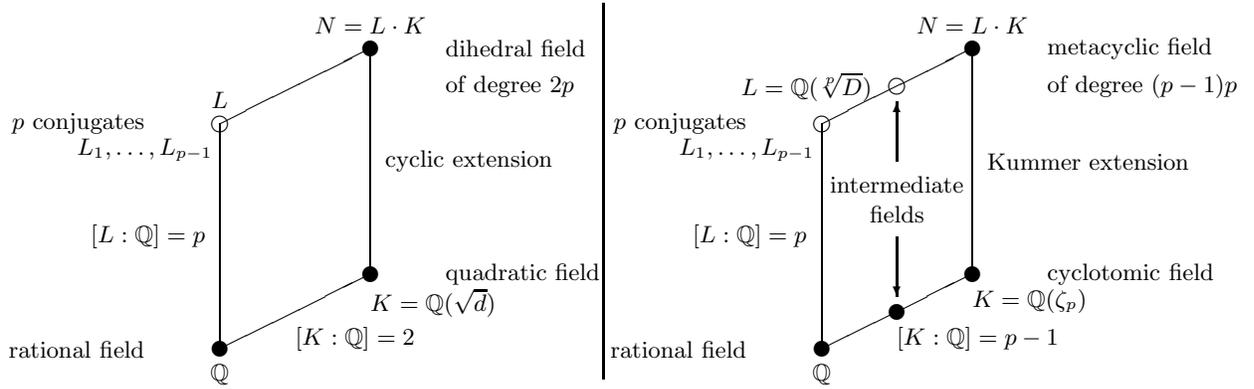


\subsection{Primitive ambiguous ideals}
\label{ss:PrmAmbIdl}
\noindent
Let \(p\ge 2\) be a prime number,
and \(N/K\) be a relative extension of number fields with degree \(p\)
(\textit{not} necessarily Galois).

\begin{definition}
\label{dfn:Ambiguous}
The group \(\mathcal{I}_N\) of fractional ideals of \(N\)
contains the \textit{subgroup of ambiguous ideals} of \(N/K\),
denoted by the symbol
\(\mathcal{I}_{N/K}:=\lbrace\mathfrak{A}\in\mathcal{I}_N\mid\mathfrak{A}^p\in\mathcal{I}_K\rbrace\).
The quotient \(\mathcal{I}_{N/K}/\mathcal{I}_K\) is called
the \(\mathbb{F}_p\)-\textit{vector space of primitive ambiguous ideals} of \(N/K\).
\end{definition}

\begin{proposition}
\label{prp:PrimitiveAmbiguous}
Let \(\mathfrak{L}_1,\ldots,\mathfrak{L}_t\) be the totally ramified prime ideals of \(N/K\),
then a basis and the dimension of the space \(\mathcal{I}_{N/K}/\mathcal{I}_K\) over \(\mathbb{F}_p\) are finite and given by
\begin{equation}
\label{eqn:PrimitiveAmbiguous}
\mathcal{I}_{N/K}/\mathcal{I}_K
\simeq\prod_{i=1}^t\,(\langle\mathfrak{L}_i\rangle/\langle\mathfrak{L}_i^p\rangle)
\simeq\mathbb{F}_p^t, \quad
\dim_{\mathbb{F}_p}(\mathcal{I}_{N/K}/\mathcal{I}_K)=t,
\end{equation}
whereas \(\mathcal{I}_{N/K}\) is an \textit{infinite} abelian group containing \(\mathcal{I}_K\).
\end{proposition}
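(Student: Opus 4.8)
The plan is to reduce everything to elementary arithmetic of exponent vectors in the free abelian group \(\mathcal{I}_N\). Since \(\mathcal{I}_N\) is freely generated by the prime ideals \(\mathfrak{L}\) of \(N\), every \(\mathfrak{A}\in\mathcal{I}_N\) is encoded by its valuations \(a_\mathfrak{L}=v_\mathfrak{L}(\mathfrak{A})\), almost all zero. First I would record the two membership conditions in these terms: for a prime \(\mathfrak{p}\) of \(K\) with factorization \(\mathfrak{p}\mathcal{O}_N=\prod_{i}\mathfrak{L}_i^{e_i}\), an extended ideal \(\mathfrak{b}\mathcal{O}_N\) has \(v_{\mathfrak{L}_i}=e_i\,v_\mathfrak{p}(\mathfrak{b})\), so \(\mathfrak{A}\in\mathcal{I}_K\) amounts to the exponent vector \((a_{\mathfrak{L}_i})_i\) being an integer multiple of \((e_i)_i\) at every \(\mathfrak{p}\); whereas \(\mathfrak{A}\in\mathcal{I}_{N/K}\), i.e. \(\mathfrak{A}^p\in\mathcal{I}_K\), amounts to the existence of an integer \(w_\mathfrak{p}\) with \(p\,a_{\mathfrak{L}_i}=e_i\,w_\mathfrak{p}\) for all \(i\mid\mathfrak{p}\). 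Both conditions are local at \(\mathfrak{p}\), so the whole computation splits as a (restricted) direct product over the primes \(\mathfrak{p}\) of \(K\).

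The decisive step is a case analysis of the local quotient at each \(\mathfrak{p}\), governed by the fundamental identity \(\sum_i e_if_i=p\) with \(p\) prime. If \(\mathfrak{p}\) is not totally ramified, then either \(\mathfrak{p}\) is inert (\(g=1,\,e_1=1\)) or it splits (\(g\ge 2\)); in the latter case every \(e_i\le e_if_i\le p-1\), so in all non-totally-ramified cases some \(e_{i_0}\) is coprime to \(p\). The relation \(p\,a_{i_0}=e_{i_0}w_\mathfrak{p}\) then forces \(p\mid w_\mathfrak{p}\), say \(w_\mathfrak{p}=p\,v\), whence \(a_{\mathfrak{L}_i}=e_i\,v\) for all \(i\); that is, the \(\mathcal{I}_{N/K}\)-condition already forces the exponent vector to be an integer multiple of \((e_i)_i\), i.e. to lie in \(\mathcal{I}_K\). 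Thus the local quotient is trivial away from totally ramified primes. At a totally ramified \(\mathfrak{p}_i\) one has \(g=1,\,e_1=p\), the \(\mathcal{I}_{N/K}\)-condition \(p\,a_{\mathfrak{L}_i}=p\,w_{\mathfrak{p}_i}\) is vacuous (any integer \(a_{\mathfrak{L}_i}\) is admissible), while membership in \(\mathcal{I}_K\) requires \(p\mid a_{\mathfrak{L}_i}\); hence the local quotient is \(\langle\mathfrak{L}_i\rangle/\langle\mathfrak{L}_i^p\rangle\simeq\mathbb{Z}/p\mathbb{Z}\), using \(\mathfrak{L}_i^p=\mathfrak{p}_i\mathcal{O}_N\in\mathcal{I}_K\).

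Assembling the local pictures gives the global isomorphism: every \(\mathfrak{A}\in\mathcal{I}_{N/K}\) is, modulo \(\mathcal{I}_K\), congruent to \(\prod_{i=1}^t\mathfrak{L}_i^{a_i}\) with the \(\mathfrak{L}_i\) the totally ramified primes, and \(\mathfrak{L}_i^p\in\mathcal{I}_K\) lets one reduce each exponent modulo \(p\). To finish I would check that the residues \(\mathfrak{L}_1\mathcal{I}_K,\ldots,\mathfrak{L}_t\mathcal{I}_K\) are \(\mathbb{F}_p\)-linearly independent: if \(\prod_i\mathfrak{L}_i^{a_i}\in\mathcal{I}_K\) with \(0\le a_i<p\), then reading off the valuation at each \(\mathfrak{p}_i\) forces \(p\mid a_i\), hence \(a_i=0\). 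This yields \(\mathcal{I}_{N/K}/\mathcal{I}_K\simeq\prod_{i=1}^t(\langle\mathfrak{L}_i\rangle/\langle\mathfrak{L}_i^p\rangle)\simeq\mathbb{F}_p^t\) with \(\dim_{\mathbb{F}_p}=t<\infty\). The two remaining assertions are immediate: \(\mathfrak{A}\in\mathcal{I}_K\) gives \(\mathfrak{A}^p\in\mathcal{I}_K\), so \(\mathcal{I}_K\subseteq\mathcal{I}_{N/K}\); and \(\mathcal{I}_K\) is free abelian of infinite rank, whence \(\mathcal{I}_{N/K}\) is infinite.

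The main obstacle is the non-Galois mixed-ramification possibility, where a single \(\mathfrak{p}\) may split with unequal ramification indices \(e_i\): one must be certain that partial ramification never contributes to the quotient. The argument above handles this uniformly, but its validity rests entirely on \(p\) being prime together with \(\sum_i e_if_i=p\), which guarantees both that a totally ramified prime has \(g=1,\,e_1=p\) and that every non-totally-ramified prime has some \(e_{i_0}\) coprime to \(p\). I would make sure these two consequences of primality are stated explicitly, since they are exactly what collapses all the non-totally-ramified local contributions and isolates the \(t\) totally ramified primes as the only source of the \(\mathbb{F}_p\)-factors.
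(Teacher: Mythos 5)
Your proposal is correct and follows essentially the same route as the paper: the paper's proof is a three-line sketch that invokes the decomposition law to assert that \(\mathcal{I}_{N/K}/\mathcal{I}_K\) is generated by the totally ramified primes, and your exponent-vector case analysis --- using the fundamental identity \(\sum_i e_if_i=p\) with \(p\) prime to produce an \(e_{i_0}\) coprime to \(p\) at every non-totally-ramified prime, which collapses the local contribution into \(\mathcal{I}_K\) --- is precisely the detail the paper leaves implicit, and it is worked out correctly even in the non-Galois, mixed-ramification case. The one claim you assert without justification is \(t<\infty\), which is part of the statement; the paper covers it by citing the theorem that the primes ramifying in \(N/K\) divide the relative discriminant (equivalently, the different), of which there are only finitely many. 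Add that one line and your argument is complete.
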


\begin{proof}
According to the definition of \(\mathcal{I}_{N/K}\),
the quotient \(\mathcal{I}_{N/K}/\mathcal{I}_K\) is an \textit{elementary} abelian \(p\)-group.
By the decomposition law for prime ideals of \(K\) in \(N\), the space
\(\mathcal{I}_{N/K}/\mathcal{I}_K\)
is generated by the \textit{totally ramified} prime ideals (with ramification index \(e=p\)) of \(N/K\),
that is to say
\(\mathcal{I}_{N/K}=\langle\mathfrak{L}\in\mathbb{P}_N\mid\mathfrak{L}^p\in\mathbb{P}_K\rangle\mathcal{I}_K\).
According to the theorem on prime ideals dividing the discriminant,
the number \(t\) of totally ramified prime ideals \(\mathfrak{L}_1,\ldots,\mathfrak{L}_t\) of \(N/K\)
is \textit{finite}.
\end{proof}


If \(L\) is another subfield of \(N\)
such that \(N=L\cdot K\) is the compositum of \(L\) and \(K\),
and \(N/L\) is of degree \(q\) \textit{coprime} to \(p\),
then the relative norm homomorphism \(N_{N/L}\) induces an \textit{epimorphism}
\begin{equation}
\label{eqn:InducedNorm}
N_{N/L}:\,\mathcal{I}_{N/K}/\mathcal{I}_K\to\mathcal{I}_{L/F}/\mathcal{I}_F,
\end{equation}
where \(F:=L\cap K\) denotes the intersection of \(L\) and \(K\) in Figure
\ref{fig:RelativeSubfields}.
Thus, by the isomorphism theorem,
we have proved:
\begin{theorem}
\label{thm:QualitativeDichotomy}
There are the following two isomorphisms between \(\mathbb{F}_p\)-vector spaces:
\begin{equation}
\label{eqn:Dichotomy}
\begin{aligned}
(\mathcal{I}_{N/K}/\mathcal{I}_K)/\ker(N_{N/L}) &\simeq \mathcal{I}_{L/F}/\mathcal{I}_F \quad \text{(quotient)}, \\
\mathcal{I}_{N/K}/\mathcal{I}_K &\simeq (\mathcal{I}_{L/F}/\mathcal{I}_F)\times\ker(N_{N/L}) \quad \text{(direct product)}.
\end{aligned}
\end{equation}
\end{theorem}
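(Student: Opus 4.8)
The plan is to read off both isomorphisms of \eqref{eqn:Dichotomy} from the induced norm epimorphism \eqref{eqn:InducedNorm}, together with the linear structure supplied by Proposition \ref{prp:PrimitiveAmbiguous}.

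First I would record that both the domain \(\mathcal{I}_{N/K}/\mathcal{I}_K\) and the codomain \(\mathcal{I}_{L/F}/\mathcal{I}_F\) of \(N_{N/L}\) are \(\mathbb{F}_p\)-vector spaces. Indeed, by Definition \ref{dfn:Ambiguous} any class represented by an ambiguous ideal is annihilated by \(p\), since \(\mathfrak{A}^p\in\mathcal{I}_K\), respectively \(\mathfrak{A}^p\in\mathcal{I}_F\); hence each quotient is an elementary abelian \(p\)-group, the first being even finite-dimensional by Proposition \ref{prp:PrimitiveAmbiguous}. Because scalar multiplication over \(\mathbb{F}_p\) is merely iterated addition, the relative norm \(N_{N/L}\), being a homomorphism of abelian groups, is automatically \(\mathbb{F}_p\)-linear; and by \eqref{eqn:InducedNorm} it is surjective.

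Granting this, the first (quotient) isomorphism in \eqref{eqn:Dichotomy} is exactly the isomorphism theorem for the linear epimorphism \(N_{N/L}\): its image \(\mathcal{I}_{L/F}/\mathcal{I}_F\) is isomorphic to the domain modulo the kernel \(\ker(N_{N/L})\). For the second (direct-product) isomorphism I would invoke the fact that every short exact sequence of \(\mathbb{F}_p\)-vector spaces
\[
0\longrightarrow\ker(N_{N/L})\longrightarrow\mathcal{I}_{N/K}/\mathcal{I}_K\overset{N_{N/L}}{\longrightarrow}\mathcal{I}_{L/F}/\mathcal{I}_F\longrightarrow 0
\]
splits, since a subspace of a vector space is always a direct summand: concretely, one extends a basis of \(\ker(N_{N/L})\) to a basis of the whole space, and the span of the adjoined vectors furnishes a linear section of \(N_{N/L}\). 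In multiplicative notation this internal splitting is precisely the asserted direct product.

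I do not anticipate a substantive obstacle in the theorem itself: once \eqref{eqn:InducedNorm} is in hand as an epimorphism, both statements are formal consequences of the isomorphism theorem and of the splitting of exact sequences over a field. The genuine arithmetic content --- that \(N_{N/L}\) descends to the quotients and maps \emph{onto} \(\mathcal{I}_{L/F}/\mathcal{I}_F\) --- lives in \eqref{eqn:InducedNorm}, which is established before the statement; reproving that surjectivity, by tracking how the totally ramified primes of \(N/K\) behave under the relative norm to \(L/F\), is where the real work would lie.
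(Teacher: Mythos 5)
Your proposal is correct and follows exactly the paper's route: the paper likewise derives both isomorphisms from the induced norm epimorphism \eqref{eqn:InducedNorm} "by the isomorphism theorem," with the direct-product form being the automatic splitting over \(\mathbb{F}_p\). You are in fact more explicit than the paper about why the quotients are \(\mathbb{F}_p\)-vector spaces and why the sequence splits, and you correctly locate the genuine arithmetic content in the surjectivity of \(N_{N/L}\), which the paper establishes (or rather asserts) before stating the theorem.
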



\begin{figure}[ht]
\caption{Hasse Subfield Diagram of \(N/F\)}
\label{fig:RelativeSubfields}

{\small

\setlength{\unitlength}{1.0cm}
\begin{picture}(5,5)(-7,-9)



\put(-6,-9){\circle*{0.2}}
\put(-6,-9.2){\makebox(0,0)[ct]{\(F=L\cap K\)}}
\put(-7,-9){\makebox(0,0)[rc]{base field}}

\put(-6,-9){\line(2,1){2}}
\put(-5,-8.7){\makebox(0,0)[lt]{\(\lbrack K:F\rbrack=q\)}}

\put(-4,-8){\circle*{0.2}}
\put(-4,-8.2){\makebox(0,0)[ct]{\(K\)}}
\put(-3,-8){\makebox(0,0)[lc]{field of degree \(q\)}}


\put(-6.2,-7.5){\makebox(0,0)[rc]{\(\lbrack L:F\rbrack=p\)}}
\put(-6,-9){\line(0,1){3}}
\put(-4,-8){\line(0,1){3}}



\put(-6,-6){\circle*{0.2}}
\put(-6,-5.8){\makebox(0,0)[cb]{\(L\)}}
\put(-7,-6){\makebox(0,0)[rc]{field of degree \(p\)}}

\put(-6,-6){\line(2,1){2}}

\put(-4,-5){\circle*{0.2}}
\put(-4,-4.8){\makebox(0,0)[cb]{\(N=L\cdot K\)}}
\put(-3,-5){\makebox(0,0)[lc]{compositum of degree \(p\cdot q\)}}


\end{picture}

}

\end{figure}
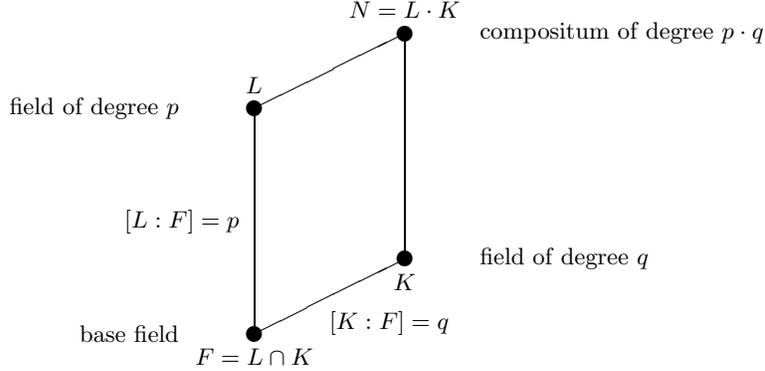


\begin{definition}
\label{dfn:Dichotomy}
Since the relative different of \(N/K\) is essentially given by
\(\mathfrak{D}_{N/K}=\prod_{i=1}^t\,\mathfrak{L}_i^{p-1}\)
the space \(\mathcal{I}_{N/K}/\mathcal{I}_K\simeq\prod_{i=1}^t\,(\langle\mathfrak{L}_i\rangle/\langle\mathfrak{L}_i^p\rangle)\)
of primitive ambiguous ideals of \(N/K\) is also called
the space of \textit{differential factors} of \(N/K\).
The two subspaces in the direct product decomposition of
\(\mathcal{I}_{N/K}/\mathcal{I}_K\) in formula
\eqref{eqn:Dichotomy}
are called \\
subspace \(\mathcal{I}_{L/F}/\mathcal{I}_F\) of \textit{absolute} differential factors of \(L/F\) and \\
subspace \(\ker(N_{N/L})\) of \textit{relative} differential factors of \(N/K\).
\end{definition}


\subsection{Splitting off the norm kernel}
\label{ss:NormKernel}
\noindent
The second isomorphism in formula
\eqref{eqn:Dichotomy}
causes a \textit{dichotomic decomposition}
of the space \(\mathcal{I}_{N/K}/\mathcal{I}_K\) of primitive ambiguous ideals of \(N/K\)
into two components, whose dimensions can be given
under the following conditions:

\begin{theorem}
\label{thm:QuantitativeDichotomy}
Let \(p\ge 3\) be an odd prime and put \(q=2\).
Among the prime ideals of \(L\) which are totally ramified over \(F\),
denote by \(\mathfrak{p}_1,\ldots,\mathfrak{p}_s\) those which split in \(N\),
\(\mathfrak{p}_i\mathcal{O}_N=\mathfrak{P}_i\mathfrak{P}_i^\prime\) for \(1\le i\le s\),
and by \(\mathfrak{q}_1,\ldots,\mathfrak{q}_n\) those which remain inert in \(N\),
\(\mathfrak{q}_j\mathcal{O}_N=\mathfrak{Q}_j\) for \(1\le j\le n\).
Then the space \(\mathcal{I}_{N/K}/\mathcal{I}_K\) of primitive ambiguous ideals of \(N/K\)
is the direct product of
the subspace \(\mathcal{I}_{L/F}/\mathcal{I}_F\) of \textbf{absolute differential factors} of \(L/F\)
and the subspace \(\ker(N_{N/L})\) of \textbf{relative differential factors} of \(N/K\),
whose dimensions over \(\mathbb{F}_p\) are given by
\begin{equation}
\label{eqn:AbsAndRel}
\begin{aligned}
\mathcal{I}_{L/F}/\mathcal{I}_F
&\simeq \prod_{i=1}^{s}\,(\langle\mathfrak{p}_i\rangle/\langle\mathfrak{p}_i^p\rangle)
\times\prod_{j=1}^{n}\,(\langle\mathfrak{q}_j\rangle/\langle\mathfrak{q}_j^p\rangle)
\simeq\mathbb{F}_p^{s+n}, \\
\ker(N_{N/L})
&\simeq \prod_{i=1}^{s}\,\Bigl(\langle\mathfrak{P}_i(\mathfrak{P}_i^\prime)^{p-1}\rangle/\langle(\mathfrak{P}_i(\mathfrak{P}_i^\prime)^{p-1})^p\rangle\Bigr)
\simeq\mathbb{F}_p^s.
\end{aligned}
\end{equation}
Consequently, the complete space of differential factors has dimension \(\dim_{\mathbb{F}_p}(\mathcal{I}_{N/K}/\mathcal{I}_K)=n+2s\).
\end{theorem}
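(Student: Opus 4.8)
The plan is to take the direct product decomposition \(\mathcal{I}_{N/K}/\mathcal{I}_K \simeq (\mathcal{I}_{L/F}/\mathcal{I}_F)\times\ker(N_{N/L})\) furnished by Theorem \ref{thm:QualitativeDichotomy} as given, and to compute the two factors separately, reducing the whole statement to Proposition \ref{prp:PrimitiveAmbiguous} together with a careful bookkeeping of ramification and residue degrees in the tower \(F\subset K,L\subset N\). For the \emph{absolute} factor I would apply Proposition \ref{prp:PrimitiveAmbiguous} directly to the degree \(p\) extension \(L/F\): by hypothesis its totally ramified prime ideals are precisely \(\mathfrak{p}_1,\ldots,\mathfrak{p}_s\) and \(\mathfrak{q}_1,\ldots,\mathfrak{q}_n\), a total of \(s+n\), so \(\mathcal{I}_{L/F}/\mathcal{I}_F\simeq\mathbb{F}_p^{s+n}\) with the basis displayed in the first line of \eqref{eqn:AbsAndRel}.

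The crux is to pin down the totally ramified primes of \(N/K\), which by Proposition \ref{prp:PrimitiveAmbiguous} control \(\dim_{\mathbb{F}_p}(\mathcal{I}_{N/K}/\mathcal{I}_K)\). For each split prime I have \(e(\mathfrak{p}_i/F)=p\) and \(e(\mathfrak{P}_i/\mathfrak{p}_i)=1\), hence \(e(\mathfrak{P}_i/F)=p\); since \(e(\cdot/F)\) factors through \(e(\cdot/K)\) and \([K:F]=2\) is coprime to \(p\), the intermediate index \(e(\mathfrak{P}_i\cap K/F)\) must be \(1\) and \(e(\mathfrak{P}_i/K)=p\), so \(\mathfrak{P}_i\), and likewise \(\mathfrak{P}_i'\), are totally ramified over \(K\); the same argument applied to an inert prime shows \(\mathfrak{Q}_j\) is totally ramified over \(K\). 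Conversely, using that \(\mathrm{Gal}(N/F)\) is dihedral of order \(2p\) and that every inertia group is one of its subgroups, I would show that a prime of \(N\) totally ramified over \(K\) has inertia group equal to \(\mathrm{Gal}(N/K)\) already over \(F\), is therefore unramified over \(L\), and hence lies above a totally ramified prime of \(L/F\) that either splits or remains inert in \(N/L\). This identifies the totally ramified primes of \(N/K\) as exactly \(\mathfrak{P}_1,\mathfrak{P}_1',\ldots,\mathfrak{P}_s,\mathfrak{P}_s',\mathfrak{Q}_1,\ldots,\mathfrak{Q}_n\), whence \(\dim_{\mathbb{F}_p}(\mathcal{I}_{N/K}/\mathcal{I}_K)=2s+n\).

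For the \emph{relative} factor I would evaluate the norm epimorphism of \eqref{eqn:InducedNorm} on this basis: the split generators satisfy \(N_{N/L}(\mathfrak{P}_i)=N_{N/L}(\mathfrak{P}_i')=\mathfrak{p}_i\) because of residue degree \(1\), while \(N_{N/L}(\mathfrak{Q}_j)=\mathfrak{q}_j^{2}\), which still generates \(\langle\mathfrak{q}_j\rangle/\langle\mathfrak{q}_j^p\rangle\) since \(\gcd(2,p)=1\); this re-confirms surjectivity onto \(\mathbb{F}_p^{s+n}\). Rank–nullity then forces \(\dim_{\mathbb{F}_p}\ker(N_{N/L})=(2s+n)-(s+n)=s\). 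I would exhibit an explicit basis by noting that \(N_{N/L}\!\left(\mathfrak{P}_i(\mathfrak{P}_i')^{p-1}\right)=\mathfrak{p}_i^{p}\equiv 0\) in \(\mathcal{I}_{L/F}/\mathcal{I}_F\), so the \(s\) classes \(\mathfrak{P}_i(\mathfrak{P}_i')^{p-1}\) lie in the kernel and are manifestly \(\mathbb{F}_p\)-independent, hence a basis, yielding the second line of \eqref{eqn:AbsAndRel}; combining the two factor dimensions through the direct product of Theorem \ref{thm:QualitativeDichotomy} gives \(\dim_{\mathbb{F}_p}(\mathcal{I}_{N/K}/\mathcal{I}_K)=n+2s\).

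The hardest part, I expect, is the second step — the \emph{exact} determination of the totally ramified primes of \(N/K\), and in particular the converse inclusion that rules out any extraneous ramified primes and guarantees that the totally ramified primes of \(L/F\) are unramified (not ramified) in the quadratic \(N/L\), so that the split/inert dichotomy is genuinely exhaustive. This hinges on tame ramification and the coprimality \(\gcd(2,p)=1\); the residue characteristic \(2\) (wild) case, where an inertia group could in principle be the full \(\mathrm{Gal}(N/F)\), is the one place I would treat with extra care.
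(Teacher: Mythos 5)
Your proposal follows essentially the same route as the paper: invoke the direct product of Theorem \ref{thm:QualitativeDichotomy}, apply Proposition \ref{prp:PrimitiveAmbiguous} to \(L/F\) to get \(t=n+s\) and to \(N/K\) to get \(t=n+2s\), and deduce \(\dim\ker(N_{N/L})=s\). The paper's own proof is much terser and outsources the "constitution of the norm kernel" to an external reference, whereas you compute it explicitly (norm values \(N_{N/L}(\mathfrak{P}_i)=\mathfrak{p}_i\), \(N_{N/L}(\mathfrak{Q}_j)=\mathfrak{q}_j^2\), rank--nullity, and the basis \(\mathfrak{P}_i(\mathfrak{P}_i^\prime)^{p-1}\)); that part is correct and makes the argument self-contained.

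One correction on the point you rightly single out as delicate. The dangerous residue characteristic for the exhaustiveness of the split/inert dichotomy is \(p\), not \(2\). If a prime of \(N\) had inertia group equal to the full dihedral group \(\mathrm{Gal}(N/F)\simeq D_p\), then in residue characteristic \(2\) its wild inertia subgroup would be a \(2\)-Sylow of \(D_p\), which is \emph{not} normal in \(D_p\) for odd \(p\) -- contradiction; so that case is actually harmless, as is the tame case (cyclic inertia). In residue characteristic \(p\), however, the wild part \(C_p\) \emph{is} normal in \(D_p\) with cyclic quotient \(C_2\), so inertia \(=D_p\) is perfectly possible: it occurs precisely when \(p\) divides both \(d_K\) and the conductor \(f\) (for \(p=3\) this configuration appears in the paper's own tables, e.g.\ \(f=3\) or \(f=2\cdot 3^2\) with \(d_K\equiv 3,6\ (\mathrm{mod}\ 9)\)). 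In that situation a totally ramified prime of \(L/F\) ramifies in \(N/L\), is neither split nor inert, and the counts \(t=n+s\) and \(t=n+2s\) both pick up an extra term. So your planned verification, if carried out only for tame primes and residue characteristic \(2\), would silently pass over the one case where the dichotomy genuinely fails; the theorem (and the paper's proof) implicitly assume every totally ramified prime of \(L/F\) is unramified in \(N/L\), and you should either state that hypothesis or restrict to conductors with \(\gcd(f,d_K)\) prime to \(p\). Apart from relocating that caveat, your argument is complete and agrees with the paper's.
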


\begin{proof}
Whereas the qualitative formula
\eqref{eqn:Dichotomy}
is valid for any prime \(p\ge 2\) and any integer \(q>1\) with \(\gcd(p,q)=1\),
the quantitative description of the norm kernel \(\ker(N_{N/L})\) is only feasible
if we put \(q=2\) and therefore have to select an odd prime \(p\ge 3\).
Replacing \(N\) by \(L\) and \(K\) by \(F\) in formula
\eqref{eqn:Dichotomy},
we get \(t=n+s\) and thus the first isomorphism of formula
\eqref{eqn:AbsAndRel}.
For \(N\) and \(K\), however, we obtain \(t=n+2s\).
We point out that, if \(s=0\),
that is, if none of the totally ramified primes of \(L/F\) splits in \(N\),
then the induced norm mapping \(N_{N/L}\) in formula
\eqref{eqn:InducedNorm}
is an isomorphism.
For the constitution of the norm kernel, see
\cite[Thm. 3.4 and Cor. 3.3(3)]{Ma2018}.
\end{proof}


\subsection{Primitive ambiguous principal ideals}
\label{ss:PrmAmbPrcIdl}
\noindent
The preceding result concerned \textit{primitive ambiguous} \textbf{ideals} of \(N/K\),
which can be interpreted as ideal factors of the \textit{relative different} \(\mathfrak{D}_{N/K}\).
Formula
\eqref{eqn:PrimitiveAmbiguous}
and Theorem
\ref{thm:QuantitativeDichotomy}
show that the \(\mathbb{F}_p\)-dimension of the space \(\mathcal{I}_{N/K}/\mathcal{I}_K\)
increases indefinitely with the number \(t\) of totally ramified primes of \(N/K\).

\noindent
Now we restrict our attention to the space \(\mathcal{P}_{N/K}/\mathcal{P}_K\)
of \textit{primitive ambiguous} \textbf{principal ideals} or \textit{differential principal factors} (DPF) of \(N/K\).
We shall see that fundamental constraints from Galois cohomology
prohibit an infinite growth of its dimension over \(\mathbb{F}_p\),
for quadratic fields \(K\).


\subsection{Splitting off the capitulation kernel}
\label{ss:CapitulationKernel}
\noindent
We have to cope with a difficulty
which arises in the case of a non-trivial class group
\(\mathrm{Cl}(K)=\mathcal{I}_K/\mathcal{P}_K>1\),
because then \(\mathcal{P}_{N/K}/\mathcal{P}_K\) cannot be viewed as a subgroup of \(\mathcal{I}_{N/K}/\mathcal{I}_K\).
Therefore we must separate the \textit{capitulation kernel} of \(N/K\),
that is the kernel of the \textit{transfer} homomorphism
\(T_{N/K}:\,\mathrm{Cl}(K)\to\mathrm{Cl}(N)\), \(\mathfrak{a}\cdot\mathcal{P}_K\mapsto(\mathfrak{a}\mathcal{O}_N)\cdot\mathcal{P}_N\),
which extends classes of \(K\) to classes of \(N\):
\begin{equation}
\label{eqn:Capitulation}
\ker(T_{N/K})
=\lbrace\mathfrak{a}\cdot\mathcal{P}_K\mid(\exists\,A\in N)\,\mathfrak{a}\mathcal{O}_N=A\mathcal{O}_N\rbrace
=(\mathcal{I}_K\cap\mathcal{P}_N)/\mathcal{P}_K.
\end{equation}
On the one hand,
\(\ker(T_{N/K})=(\mathcal{I}_K\cap\mathcal{P}_N)/\mathcal{P}_K\) is a subgroup of \(\mathcal{I}_K/\mathcal{P}_K=\mathrm{Cl}(K)\),
consisting of capitulating ideal classes of \(K\).
On the other hand,
since \(\mathcal{I}_K\le\mathcal{I}_{N/K}\) consists of ambiguous ideals of \(N/K\),
\(\ker(T_{N/K})=(\mathcal{I}_K\cap\mathcal{P}_N)/\mathcal{P}_K\) is a subgroup of \(\mathcal{P}_{N/K}/\mathcal{P}_K\),
consisting of special primitive ambiguous principal ideals of \(N/K\),
and we can form the quotient
\begin{equation}
\label{eqn:QuotientSeparation}
(\mathcal{P}_{N/K}/\mathcal{P}_K)/\bigl((\mathcal{I}_K\cap\mathcal{P}_N)/\mathcal{P}_K\bigr)
\simeq\mathcal{P}_{N/K}/(\mathcal{I}_K\cap\mathcal{P}_N)=\mathcal{P}_{N/K}/(\mathcal{I}_K\cap\mathcal{P}_{N/K})
\simeq(\mathcal{P}_{N/K}\cdot\mathcal{I}_K)/\mathcal{I}_K.
\end{equation}
This quotient relation of \(\mathbb{F}_p\)-vector spaces is equivalent with a direct product relation
\begin{equation}
\label{eqn:ProductSeparation}
\mathcal{P}_{N/K}/\mathcal{P}_K
\simeq(\mathcal{P}_{N/K}\cdot\mathcal{I}_K)/\mathcal{I}_K\times\ker(T_{N/K}).
\end{equation}
Since \((\mathcal{P}_{N/K}\cdot\mathcal{I}_K)/\mathcal{I}_K\le\mathcal{I}_{N/K}/\mathcal{I}_K\) is an actual inclusion,
the factorization of \(\mathcal{I}_{N/K}/\mathcal{I}_K\) in formula
\eqref{eqn:Dichotomy}
restricts to a factorization
\begin{equation}
\label{eqn:PrincipalDichotomy}
(\mathcal{P}_{N/K}\cdot\mathcal{I}_K)/\mathcal{I}_K
\simeq(\mathcal{P}_{L/F}/\mathcal{P}_F)\times\Bigl(\ker(N_{N/L})\cap\bigl((\mathcal{P}_{N/K}\cdot\mathcal{I}_K)/\mathcal{I}_K\bigr)\Bigr),
\end{equation}
provided that \(F\) is a field with trivial class group \(\mathrm{Cl}(F)\),
that is \(\mathcal{I}_F=\mathcal{P}_F\)
and thus \(\mathcal{P}_{L/F}/\mathcal{P}_F\le\mathcal{I}_{L/F}/\mathcal{I}_F\).
Combining the formulas
\eqref{eqn:ProductSeparation}
and
\eqref{eqn:PrincipalDichotomy}
for the rational base field \(F=\mathbb{Q}\) ,
we obtain:


\begin{theorem}
\label{thm:Trichotomy}
There is a \textbf{trichotomic decomposition}
of the space \(\mathcal{P}_{N/K}/\mathcal{P}_K\) of differential principal factors of \(N/K\)
into three components,
\begin{equation}
\label{eqn:Trichotomy}
\mathcal{P}_{N/K}/\mathcal{P}_K\simeq
\mathcal{P}_{L/\mathbb{Q}}/\mathcal{P}_{\mathbb{Q}}
\times\Bigl(\ker(N_{N/L})\cap\bigl((\mathcal{P}_{N/K}\mathcal{I}_K)/\mathcal{I}_K\bigr)\Bigr)
\times\ker(T_{N/K}),
\end{equation}
the \textbf{absolute principal factors}, \(\mathcal{P}_{L/\mathbb{Q}}/\mathcal{P}_{\mathbb{Q}}\), of \(L/\mathbb{Q}\), \\
the \textbf{relative principal factors}, \(\ker(N_{N/L})\cap\bigl((\mathcal{P}_{N/K}\mathcal{I}_K)/\mathcal{I}_K\bigr)\), of \(N/K\), and \\
the \textbf{capitulation kernel}, \(\ker(T_{N/K})\), of \(N/K\).
\end{theorem}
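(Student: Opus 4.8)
The plan is to obtain the three-fold decomposition by concatenating the two direct-product splittings already recorded in \eqref{eqn:ProductSeparation} and \eqref{eqn:PrincipalDichotomy}, specialized to the rational base field \(F=L\cap K=\mathbb{Q}\). The enabling structural fact throughout is that \(\mathcal{P}_{N/K}/\mathcal{P}_K\) and all the spaces appearing beside it are elementary abelian \(p\)-groups, i.e. \(\mathbb{F}_p\)-vector spaces; hence every short exact sequence of them splits, and each quotient relation below is automatically equivalent to a direct-product relation.

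First I would exploit the dual description of the capitulation kernel in \eqref{eqn:Capitulation}. Written as \((\mathcal{I}_K\cap\mathcal{P}_N)/\mathcal{P}_K\), it is at once a subgroup of \(\mathrm{Cl}(K)=\mathcal{I}_K/\mathcal{P}_K\) and, because \(\mathcal{I}_K\) consists of ambiguous ideals of \(N/K\), a subgroup of \(\mathcal{P}_{N/K}/\mathcal{P}_K\). The second isomorphism theorem then delivers the quotient \eqref{eqn:QuotientSeparation}, which splits into the first direct product \eqref{eqn:ProductSeparation}, peeling \(\ker(T_{N/K})\) off \(\mathcal{P}_{N/K}/\mathcal{P}_K\) with complement \((\mathcal{P}_{N/K}\cdot\mathcal{I}_K)/\mathcal{I}_K\).

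Next I would decompose that complement. Over \(F=\mathbb{Q}\) one has \(\mathcal{I}_{\mathbb{Q}}=\mathcal{P}_{\mathbb{Q}}\), so the principal absolute factors satisfy \(\mathcal{P}_{L/\mathbb{Q}}/\mathcal{P}_{\mathbb{Q}}\le\mathcal{I}_{L/\mathbb{Q}}/\mathcal{I}_{\mathbb{Q}}\) and may legitimately be regarded as a subspace of the ambient space \(\mathcal{I}_{N/K}/\mathcal{I}_K\) via the dichotomy of Theorem \ref{thm:QualitativeDichotomy}. I would then apply the induced norm \(N_{N/L}\) of \eqref{eqn:InducedNorm}, restricted to the sublattice \((\mathcal{P}_{N/K}\cdot\mathcal{I}_K)/\mathcal{I}_K\): its kernel is \(\ker(N_{N/L})\cap\bigl((\mathcal{P}_{N/K}\mathcal{I}_K)/\mathcal{I}_K\bigr)\) and its image is \(\mathcal{P}_{L/\mathbb{Q}}/\mathcal{P}_{\mathbb{Q}}\), so splitting gives \eqref{eqn:PrincipalDichotomy}. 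Substituting \eqref{eqn:PrincipalDichotomy} into \eqref{eqn:ProductSeparation} produces the asserted triple product \eqref{eqn:Trichotomy}.

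The main obstacle is the one genuinely arithmetic step hidden in the previous paragraph: verifying that the ambient norm splitting of \(\mathcal{I}_{N/K}/\mathcal{I}_K\) really does restrict to the principal sublattice. A direct-sum decomposition of a vector space need not restrict to an arbitrary subspace; here it works only because the image \(N_{N/L}\bigl((\mathcal{P}_{N/K}\mathcal{I}_K)/\mathcal{I}_K\bigr)\) equals exactly \(\mathcal{P}_{L/\mathbb{Q}}/\mathcal{P}_{\mathbb{Q}}\) --- the norm of a primitive ambiguous principal ideal of \(N/K\) is a primitive ambiguous principal ideal of \(L/\mathbb{Q}\), and every such principal differential factor is attained --- and because this image sits inside the sublattice itself, which is precisely what the triviality of \(\mathrm{Cl}(\mathbb{Q})\) guarantees. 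Once these two containments are checked, the splitting of \(\mathbb{F}_p\)-spaces is formal and the three factors assemble as claimed; I would expect the bookkeeping of which principal ideals realize the absolute versus the relative directions to be the most delicate part to write out in full.
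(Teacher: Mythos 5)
Your proposal is correct and follows essentially the same route as the paper: peel off the capitulation kernel via \eqref{eqn:QuotientSeparation}--\eqref{eqn:ProductSeparation}, then restrict the norm-kernel dichotomy of Theorem~\ref{thm:QualitativeDichotomy} to the sublattice \((\mathcal{P}_{N/K}\mathcal{I}_K)/\mathcal{I}_K\) using \(\mathcal{I}_{\mathbb{Q}}=\mathcal{P}_{\mathbb{Q}}\), and combine. If anything, you are more explicit than the paper about the one nontrivial verification --- that the restricted norm maps \((\mathcal{P}_{N/K}\mathcal{I}_K)/\mathcal{I}_K\) \emph{onto} \(\mathcal{P}_{L/\mathbb{Q}}/\mathcal{P}_{\mathbb{Q}}\) so that the ambient splitting really does restrict --- which the paper passes over with the single clause ``restricts to a factorization.''
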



\subsection{Galois cohomology}
\label{ss:GaloisCohomology}
\noindent
For establishing a quantitative version of the qualitative formula
\eqref{eqn:Trichotomy},
we suppose that \(N/K\) is a cyclic relative extension of odd prime degree \(p\)
and we use the Galois cohomology of the unit group \(U_N\)
as a module over the automorphism group \(G=\mathrm{Gal}(N/K)=\langle\sigma\rangle\simeq C_p\).
In fact, we combine a theorem of Iwasawa
\cite{Iw}
on the first cohomology \(\mathrm{H}^1(G,U_N)\)
with a theorem of Hasse
\cite{Ha1927}
on the Herbrand quotient of \(U_N\)
\cite{Hb},
and we use Dirichlet's theorem on the torsion-free unit rank of \(K\):
\begin{equation}
\label{eqn:Herbrand}
\begin{aligned}
\mathrm{H}^1(G,U_N) &\simeq (U_N\cap\ker(N_{N/K}))/U_N^{\sigma-1}\simeq\mathcal{P}_{N/K}/\mathcal{P}_K\ \text{(Iwasawa)}, \\
\mathrm{H}^0(G,U_N) &\simeq U_K/N_{N/K}(U_N), \text{ with } (U_K:N_{N/K}(U_N))=p^U,\ 0\le U\le r_1+r_2-\theta, \\
\frac{\#\mathrm{H}^1(G,U_N)}{\#\mathrm{H}^0(G,U_N)} &= \lbrack N:K\rbrack=p \quad \text{(Hasse)},
\end{aligned}
\end{equation}
where \((r_1,r_2)\) is the signature of \(K\), and \(\theta=0\) if \(K\) contains the \(p\)th roots of unity, but \(\theta=1\) else.


\begin{corollary}
\label{cor:Trichotomy}
If \(N/K\) is cyclic of odd prime degree \(p\ge 3\),
then the \(\mathbb{F}_p\)-dimensions of the spaces of differential principal factors in Theorem
\ref{thm:Trichotomy}
are connected by the \textbf{fundamental equation}
\begin{equation}
\label{eqn:Dimensions}
U+1=A+R+C,\quad \text{where}
\end{equation}
\(A:=\dim_{\mathbb{F}_p}(\mathcal{P}_{L/\mathbb{Q}}/\mathcal{P}_{\mathbb{Q}})\) is the dimension of absolute principal factors, \\
\(R:=\dim_{\mathbb{F}_p}\Bigl(\ker(N_{N/L})\cap\bigl((\mathcal{P}_{N/K}\mathcal{I}_K)/\mathcal{I}_K\bigr)\Bigr)\)
is the dimension of relative principal factors, and \\
\(C:=\dim_{\mathbb{F}_p}(\ker(T_{N/K}))\) is the dimension of the capitulation karnel.
\end{corollary}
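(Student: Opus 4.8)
The plan is to read off the left-hand side $U+1$ from the Galois cohomology in \eqref{eqn:Herbrand} and the right-hand side $A+R+C$ from the trichotomic direct product in Theorem \ref{thm:Trichotomy}, and then to observe that the two quantities both compute the single invariant $\dim_{\mathbb{F}_p}(\mathcal{P}_{N/K}/\mathcal{P}_K)$. The corollary is thus a dimension count applied to two different descriptions of the same space.

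First I would handle the right-hand side. Because $N/K$ is cyclic of prime degree $p$, every group occurring in \eqref{eqn:Trichotomy} is an elementary abelian $p$-group, hence an $\mathbb{F}_p$-vector space, and the isomorphism there is a genuine direct product of such spaces. Since $\dim_{\mathbb{F}_p}$ is additive on direct products, Theorem \ref{thm:Trichotomy} yields at once
\[
\dim_{\mathbb{F}_p}(\mathcal{P}_{N/K}/\mathcal{P}_K)=A+R+C,
\]
with $A$, $R$, $C$ the three summands named in the statement.

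Next I would compute the same dimension cohomologically. By the Iwasawa isomorphism recorded in \eqref{eqn:Herbrand} we have $\mathrm{H}^1(G,U_N)\simeq\mathcal{P}_{N/K}/\mathcal{P}_K$; as $G\simeq C_p$, the group $\mathrm{H}^1(G,U_N)$ is annihilated by $p$, so it is an $\mathbb{F}_p$-vector space and $\#\mathrm{H}^1(G,U_N)=p^{\dim_{\mathbb{F}_p}(\mathrm{H}^1(G,U_N))}$. Hasse's Herbrand-quotient formula in \eqref{eqn:Herbrand} gives $\#\mathrm{H}^1(G,U_N)/\#\mathrm{H}^0(G,U_N)=[N:K]=p$, while the index computation $\#\mathrm{H}^0(G,U_N)=(U_K:N_{N/K}(U_N))=p^U$ forces $\#\mathrm{H}^1(G,U_N)=p^{U+1}$, and therefore
\[
\dim_{\mathbb{F}_p}(\mathcal{P}_{N/K}/\mathcal{P}_K)=U+1.
\]
Equating the two expressions for this dimension produces the fundamental equation \eqref{eqn:Dimensions}.

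The argument is bookkeeping once the two structural inputs are in hand, so there is no serious obstacle; the single point that deserves care is that the Iwasawa isomorphism must be read as an isomorphism of $\mathbb{F}_p$-vector spaces, not merely of abelian groups, so that the passage from group order to $\mathbb{F}_p$-dimension is legitimate. This is automatic in the present setting, since both sides are elementary abelian as quotients of groups of exponent $p$, which is exactly why the number $U+1$ appearing on the cohomological side is the honest $\mathbb{F}_p$-dimension that matches $A+R+C$.
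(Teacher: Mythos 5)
Your argument is exactly the one the paper intends: the corollary is stated immediately after \eqref{eqn:Herbrand} precisely so that $\dim_{\mathbb{F}_p}(\mathcal{P}_{N/K}/\mathcal{P}_K)$ is computed once as $U+1$ via the Iwasawa isomorphism together with Hasse's Herbrand quotient, and once as $A+R+C$ via the additivity of dimension over the direct product in Theorem \ref{thm:Trichotomy}. Your added remark that the Iwasawa isomorphism guarantees $\mathcal{P}_{N/K}/\mathcal{P}_K$ is annihilated by $p$ (so that orders translate into $\mathbb{F}_p$-dimensions) is a correct and worthwhile observation that the paper leaves implicit.
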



\begin{corollary}
\label{cor:Estimates}
Under the assumptions \(p\ge 3\), \(q=2\) of Theorem
\ref{thm:QualitativeDichotomy},
in particular for \(N\) dihedral of degree \(2p\),
the dimensions in Corollary
\ref{cor:Trichotomy}
are bounded by the following \textbf{estimates}
\begin{equation}
\label{eqn:Estimates}
0\le A\le\min(n+s,m), \quad
0\le R\le\min(s,m), \quad
0\le C\le\min(\varrho_p,m),
\end{equation}
where \(\varrho_p:=\mathrm{rank}_p(\mathrm{Cl}(K))\),
and \(m:=1+r_1+r_2-\theta\) denotes the cohomological maximum of \(U+1\).
In particular, \\
\(m=2\) for real quadratic \(K\) with \((r_1,r_2)=(2,0)\) or \(K=\mathbb{Q}(\sqrt{-3})\) if \(p=3\), \\
\(m=1\) for imaginary quadratic \(K\) with \((r_1,r_2)=(0,1)\), except \(K=\mathbb{Q}(\sqrt{-3})\) when \(p=3\).
\end{corollary}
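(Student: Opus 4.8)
The plan is to assemble three ingredients that are already available: the fundamental equation of Corollary \ref{cor:Trichotomy}, the cohomological bound on \(U\) recorded in \eqref{eqn:Herbrand}, and the ambient-space dimensions of Theorem \ref{thm:QuantitativeDichotomy}. The lower bounds \(0\le A\), \(0\le R\), \(0\le C\) are immediate, since each quantity is the \(\mathbb{F}_p\)-dimension of a vector space.

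For the upper bounds I would first invoke the fundamental equation \(A+R+C=U+1\) together with the Hasse--Dirichlet estimate \(0\le U\le r_1+r_2-\theta\) of \eqref{eqn:Herbrand}. These combine to give \(A+R+C=U+1\le 1+r_1+r_2-\theta=m\). Because all three summands are non-negative, each is individually dominated by the total: \(A\le A+R+C\le m\), and likewise \(R\le m\) and \(C\le m\). This supplies the ``\(m\)'' alternative in each of the three minima.

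Next I would bound each dimension by the size of its natural ambient space, using the trichotomic decomposition of Theorem \ref{thm:Trichotomy} with \(F=\mathbb{Q}\). The absolute principal factors \(\mathcal{P}_{L/\mathbb{Q}}/\mathcal{P}_{\mathbb{Q}}\) embed into \(\mathcal{I}_{L/\mathbb{Q}}/\mathcal{I}_{\mathbb{Q}}\simeq\mathbb{F}_p^{n+s}\) by Theorem \ref{thm:QuantitativeDichotomy}, whence \(A\le n+s\). The relative principal factors sit inside \(\ker(N_{N/L})\simeq\mathbb{F}_p^{s}\), so \(R\le s\). The capitulation kernel \(\ker(T_{N/K})\) is a subgroup of the \(p\)-class group \(\mathrm{Cl}_p(K)\) and is elementary abelian, hence of \(p\)-rank at most \(\varrho_p=\mathrm{rank}_p(\mathrm{Cl}(K))\), giving \(C\le\varrho_p\). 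Taking the minimum of each ambient bound with \(m\) yields the three displayed estimates.

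Finally I would evaluate \(m=1+r_1+r_2-\theta\) case by case, the only point requiring any care. For real quadratic \(K\) one has \((r_1,r_2)=(2,0)\); since for \(p\ge 3\) the primitive \(p\)th roots of unity are non-real, \(K\) cannot contain them, so \(\theta=1\) and \(m=2\). For imaginary quadratic \(K\) one has \((r_1,r_2)=(0,1)\), and generically \(K\) contains no primitive \(p\)th root of unity, forcing \(\theta=1\) and \(m=1\). The sole exception is \(K=\mathbb{Q}(\sqrt{-3})=\mathbb{Q}(\zeta_3)\) with \(p=3\), where the cube roots of unity do lie in \(K\), so \(\theta=0\) and \(m=1+0+1-0=2\). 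There is no genuine obstacle here: the corollary merely repackages the fundamental equation and the ambient dimensions, and the one subtlety is the determination of \(\theta\)---equivalently, whether \(K\) contains the \(p\)th roots of unity---which is exactly what separates the generic imaginary quadratic field from the exceptional field \(\mathbb{Q}(\zeta_3)\).
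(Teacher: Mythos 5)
Your proof is correct and follows exactly the route the paper intends (the corollary is stated there without a written proof, as an immediate consequence of the preceding results): the fundamental equation \(U+1=A+R+C\) of Corollary \ref{cor:Trichotomy} combined with the Hasse--Dirichlet bound \(U\le r_1+r_2-\theta\) from \eqref{eqn:Herbrand} yields the bound by \(m\), the ambient dimensions of Theorem \ref{thm:QuantitativeDichotomy} give \(A\le n+s\) and \(R\le s\), and the elementary abelian capitulation kernel inside \(\mathrm{Cl}_p(K)\) gives \(C\le\varrho_p\). Your determination of \(\theta\), including the exceptional case \(K=\mathbb{Q}(\sqrt{-3})=\mathbb{Q}(\zeta_3)\) for \(p=3\), matches the statement.
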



\begin{remark}
\label{rmk:Estimates}
For \(N\) pure metacyclic of degree \((p-1)p\),
the space \(\mathcal{P}_{L/\mathbb{Q}}/\mathcal{P}_{\mathbb{Q}}\) of absolute principal factors
contains the one-dimensional subspace \(\Delta=\langle\sqrt[p]{D}\rangle\) generated by the \textit{radicals}, and thus
\begin{equation}
\label{eqn:EstimatesMet}
1\le A\le\min(t,m), \
0\le R\le m-1, \
0\le C\le\min(\varrho_p,m-1),
\end{equation}
where \(m=\frac{p+1}{2}\) for cyclotomic \(K\) with \((r_1,r_2)=(0,\frac{p-1}{2})\). \\
In particular \(C=0\) for a \textit{regular} prime \(p\), for instance \(p<37\).
\end{remark}


\begin{remark}
\label{rmk:Inclusion}
We mentioned that in general
\(\mathcal{P}_{N/K}/\mathcal{P}_K\) cannot be viewed as a subgroup of \(\mathcal{I}_{N/K}/\mathcal{I}_K\).
In fact, for a dihedral field \(N\) which is unramified with conductor \(f=1\) over \(K\),
we have \(n=s=0\),
consequently \(A=R=0\),
and \(\mathcal{I}_{N/K}/\mathcal{I}_K\simeq 0\) is the nullspace,
whereas \(\mathcal{P}_{N/K}/\mathcal{P}_K\simeq\ker(T_{N/K})\) is at least one-dimensional,
according to Hilbert's Theorem 94
\cite{Hi},
and at most two-dimensional by the estimate \(C\le\min(\varrho_p,m)\le\min(\varrho_p,2)\le 2\).
\end{remark}


In the next two sections, we apply the results of \S\S\
\ref{ss:PrmAmbIdl}
--
\ref{ss:GaloisCohomology}
to various extensions \(N/K\).


\subsection{Differential principal factorization (DPF) types of complex dihedral fields}
\label{ss:DihedralTypes}
\noindent
Let \(p\) be an odd prime.
We recall the classification theorem
for \textit{pure cubic} fields \(L=\mathbb{Q}(\sqrt[3]{D})\)
and their Galois closure \(N=\mathbb{Q}(\zeta_3,\sqrt[3]{D})\),
that is the metacyclic case \(p=3\).
The \textit{coarse} classification of \(N\)
according to the cohomological invariants \(U\) and \(A\) alone
is closely related to the
classification of \textit{simply real dihedral} fields of degree \(2p\) with any odd prime \(p\)
by Nicole Moser
\cite[Dfn. III.1 and Prop. III.3, p. 61]{Mo},
as illustrated in Figure
\ref{fig:MoserExtendedCubic}.
The coarse types \(\alpha\) and \(\beta\)
are completely analogous in both cases.
The additional type \(\gamma\) is required for pure cubic fields,
because there arises the possibility that the primitive cube root of unity \(\zeta_3\)
occurs as relative norm \(N_{N/K}(Z)\) of a unit \(Z\in U_N\).
Due to the existence of radicals in the pure cubic case,
the \(\mathbb{F}_p\)-dimension \(A\) of the vector space of absolute DPF
exceeds the corresponding dimension for simply real dihedral fields by one.


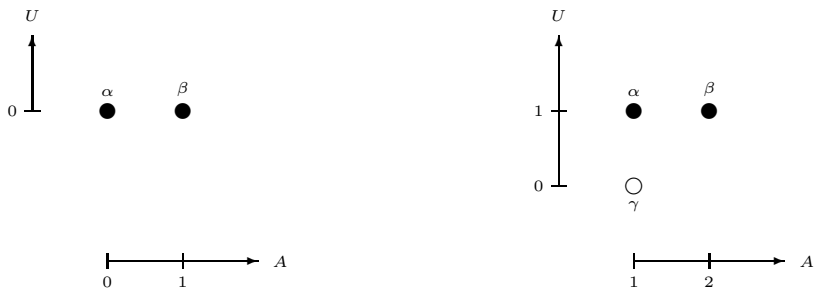
\begin{figure}[ht]
\caption{Classification of Simply Real Dihedral and Pure Cubic Fields}
\label{fig:MoserExtendedCubic}

{\tiny

\setlength{\unitlength}{1.0cm}
\begin{picture}(15,4)(-11,-9)




\put(-9,-5.8){\makebox(0,0)[cb]{\(U\)}}
\put(-9,-7){\vector(0,1){1}}
\put(-9,-7){\line(0,1){0}}
\multiput(-9.1,-7)(0,1){1}{\line(1,0){0.2}}

\put(-9.2,-7){\makebox(0,0)[rc]{\(0\)}}

\put(-5.8,-9){\makebox(0,0)[lc]{\(A\)}}
\put(-7,-9){\vector(1,0){1}}
\put(-8,-9){\line(1,0){1}}
\multiput(-8,-9.1)(1,0){2}{\line(0,1){0.2}}

\put(-8,-9.2){\makebox(0,0)[ct]{\(0\)}}
\put(-7,-9.2){\makebox(0,0)[ct]{\(1\)}}


\put(-8,-7){\circle*{0.2}}
\put(-8,-6.8){\makebox(0,0)[cb]{\(\alpha\)}}
\put(-7,-7){\circle*{0.2}}
\put(-7,-6.8){\makebox(0,0)[cb]{\(\beta\)}}




\put(-2,-5.8){\makebox(0,0)[cb]{\(U\)}}
\put(-2,-7){\vector(0,1){1}}
\put(-2,-8){\line(0,1){1}}
\multiput(-2.1,-8)(0,1){2}{\line(1,0){0.2}}

\put(-2.2,-7){\makebox(0,0)[rc]{\(1\)}}
\put(-2.2,-8){\makebox(0,0)[rc]{\(0\)}}

\put(1.2,-9){\makebox(0,0)[lc]{\(A\)}}
\put(0,-9){\vector(1,0){1}}
\put(-1,-9){\line(1,0){1}}
\multiput(-1,-9.1)(1,0){2}{\line(0,1){0.2}}

\put(-1,-9.2){\makebox(0,0)[ct]{\(1\)}}
\put(0,-9.2){\makebox(0,0)[ct]{\(2\)}}


\put(-1,-7){\circle*{0.2}}
\put(-1,-6.8){\makebox(0,0)[cb]{\(\alpha\)}}
\put(0,-7){\circle*{0.2}}
\put(0,-6.8){\makebox(0,0)[cb]{\(\beta\)}}

\put(-1,-8){\circle{0.2}}
\put(-1,-8.2){\makebox(0,0)[ct]{\(\gamma\)}}


\end{picture}
}
\end{figure}


\noindent
The \textit{fine} classification of \(N\)
according to the invariants \(U\), \(A\), \(R\) and \(C\)
in the simply real dihedral situation with \(U+1=A+R+C\)
splits type \(\alpha\) with \(A=0\) further in
type \(\alpha_1\) with \(C=1\) (capitulation) and
type \(\alpha_2\) with \(R=1\) (relative DPF).
In the pure cubic situation, however, no further splitting occurs,
since \(C=0\), and \(R=U+1-A\) is determined uniquely by \(U\) and \(A\) already.
We oppose the two classifications in the following theorems.

\begin{theorem}
\label{thm:MainComplex}
Each simply real dihedral field \(N/\mathbb{Q}\)
of absolute degree \(\lbrack N:\mathbb{Q}\rbrack=2p\) with an odd prime \(p\)
belongs to precisely one of the following \(3\) differential principal factorization types,
in dependence on the triplet \((A,R,C)\):


\renewcommand{\arraystretch}{1.1}

\begin{table}[ht]
\label{tbl:ComplexDPFTypes}
\begin{center}
\begin{tabular}{|c||c||c||ccc|}
\hline
 Type        & \(U\) & \(U+1=A+R+C\) & \(A\) & \(R\) & \(C\) \\
\hline
\(\alpha_1\) & \(0\) & \(1\) & \(0\) & \(0\) & \(1\) \\
\(\alpha_2\) & \(0\) & \(1\) & \(0\) & \(1\) & \(0\) \\
\(\beta\)    & \(0\) & \(1\) & \(1\) & \(0\) & \(0\) \\
\hline
\end{tabular}
\end{center}
\end{table}


\end{theorem}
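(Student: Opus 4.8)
The plan is to reduce the whole classification to the fundamental equation \(U+1=A+R+C\) of Corollary~\ref{cor:Trichotomy}, combined with the cohomological upper bound on \(U\) furnished by the Herbrand--Iwasawa--Hasse relations \eqref{eqn:Herbrand}, and then to read off the admissible triplets \((A,R,C)\) by a purely combinatorial counting argument. The trichotomic direct product of Theorem~\ref{thm:Trichotomy} already supplies the three spaces whose dimensions are \(A\), \(R\) and \(C\); what remains is to force \(U=0\) and to enumerate.

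First I would fix the signature data. A simply real dihedral field \(N\) of degree \(2p\) is the compositum \(N=L\cdot K\), where \(L\) is a non-Galois field of degree \(p\) possessing a single real place and \(K\) is its quadratic resolvent, namely the fixed field of the rotation subgroup \(C_p\triangleleft D_p\). Complex conjugation is then a reflection in \(D_p\), which acts nontrivially on \(K\); hence \(K=\mathbb{Q}(\sqrt{d})\) is imaginary quadratic with signature \((r_1,r_2)=(0,1)\), and \(N/K\) is cyclic of degree \(p\), so that Corollary~\ref{cor:Trichotomy} applies verbatim. Because \(p\ge 3\), such an imaginary quadratic \(K\) can contain the \(p\)th roots of unity only in the single combination \(K=\mathbb{Q}(\sqrt{-3})\) with \(p=3\); this is precisely the pure cubic (metacyclic) situation classified separately in Theorem~\ref{thm:MainCubic}, and I would explicitly exclude it from the simply real dihedral régime. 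Consequently \(\theta=1\) throughout, and Corollary~\ref{cor:Estimates} gives the cohomological maximum \(m=1+r_1+r_2-\theta=1+0+1-1=1\).

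Next I would collapse the invariant \(U\). The bound \(0\le U\le r_1+r_2-\theta\) of \eqref{eqn:Herbrand} reads \(0\le U\le 0\), forcing \(U=0\); equivalently, \(U_K/U_K^p\) is trivial for an imaginary quadratic field of unit rank \(0\) with no \(p\)-torsion (odd \(p\)), so \(N_{N/K}\colon U_N\to U_K\) is surjective and \(\mathrm{H}^0(G,U_N)=U_K/N_{N/K}(U_N)\) vanishes. Substituting \(U=0\) into the fundamental equation \eqref{eqn:Dimensions} yields \(A+R+C=1\). Since \(A,R,C\) are \(\mathbb{F}_p\)-dimensions of the three subspaces in \eqref{eqn:Trichotomy}, hence non-negative integers summing to \(1\), exactly one of them equals \(1\) and the other two vanish, producing the three pairwise distinct triplets \((A,R,C)\in\{(1,0,0),(0,1,0),(0,0,1)\}\). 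Each is compatible with the estimates \eqref{eqn:Estimates}: \(A=1\) needs a totally ramified prime of \(L/\mathbb{Q}\) (\(n+s\ge 1\)), \(R=1\) needs such a prime that splits in \(N\) (\(s\ge 1\)), and \(C=1\) needs \(\varrho_p\ge 1\). Labelling these \(\beta\), \(\alpha_2\), \(\alpha_1\) reproduces exactly the three rows of the table, and because the triplet is an arithmetic invariant of \(N\), every simply real dihedral field lies in precisely one type.

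The only non-formal step, and hence the main obstacle, is the signature bookkeeping of the first two paragraphs: one must verify cleanly that ``simply real'' forces \(K\) to be imaginary quadratic with \(m=1\), and correctly isolate the lone exception \((K,p)=(\mathbb{Q}(\sqrt{-3}),3)\) as belonging to the metacyclic classification of Theorem~\ref{thm:MainCubic} rather than the dihedral one treated here. Once \(U=0\) is secured, Theorem~\ref{thm:Trichotomy} and Corollary~\ref{cor:Trichotomy} make the enumeration automatic.
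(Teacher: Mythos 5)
Your proposal is correct and follows essentially the same route as the paper, whose proof is simply the citation ``Consequence of Cor.~\ref{cor:Trichotomy} and \ref{cor:Estimates}'': you derive \(U=0\) from the cohomological bound \(m=1\) for imaginary quadratic \(K\) (with the \(\mathbb{Q}(\sqrt{-3})\), \(p=3\) exception set aside, exactly as in Corollary~\ref{cor:Estimates}) and then enumerate the solutions of \(A+R+C=1\). Your write-up merely makes explicit the signature bookkeeping that the paper delegates to Moser.
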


\begin{proof}
Consequence of Cor.
\ref{cor:Trichotomy}
and 
\ref{cor:Estimates}.
See \cite[Dfn. III.1 and Prop. III.3, p. 61]{Mo} and \cite{Ma1991}.
\end{proof}

\begin{theorem}
\label{thm:MainCubic}
Each pure metacyclic field \(N=\mathbb{Q}(\zeta_3,\sqrt[3]{D})\)
of absolute degree \(\lbrack N:\mathbb{Q}\rbrack=6\)
with cube free radicand \(D\in\mathbb{Z}\), \(D\ge 2\),
belongs to precisely one of the following \(3\) differential principal factorization types,
in dependence on the invariant \(U\) and the pair \((A,R)\):


\renewcommand{\arraystretch}{1.1}

\begin{table}[ht]
\label{tbl:CubicDPFTypes}
\begin{center}
\begin{tabular}{|c||c||c||cc|}
\hline
 Type      & \(U\) & \(U+1=A+R\) & \(A\) & \(R\) \\
\hline
\(\alpha\) & \(1\) & \(2\) & \(1\) & \(1\) \\
\(\beta\)  & \(1\) & \(2\) & \(2\) & \(0\) \\
\hline
\(\gamma\) & \(0\) & \(1\) & \(1\) & \(0\) \\
\hline
\end{tabular}
\end{center}
\end{table}


\end{theorem}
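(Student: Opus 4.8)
The plan is to derive the table as a purely arithmetic consequence of the fundamental equation \(U+1=A+R+C\) of Corollary~\ref{cor:Trichotomy}, together with the estimates collected in Corollary~\ref{cor:Estimates} and Remark~\ref{rmk:Estimates}, specialized to the pure cubic situation \(p=3\), \(K=\mathbb{Q}(\zeta_3)=\mathbb{Q}(\sqrt{-3})\), \(L=\mathbb{Q}(\sqrt[3]{D})\), \(N=L\cdot K\), with \(G=\mathrm{Gal}(N/K)\simeq C_3\). Here \(K\) is imaginary quadratic of signature \((r_1,r_2)=(0,1)\) and, since \(\zeta_3\in K\), we have \(\theta=0\), whence the cohomological maximum is \(m=1+r_1+r_2-\theta=2\).

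First I would eliminate the capitulation contribution. Because \(p=3\) is a regular prime, the \(3\)-class group of \(\mathbb{Q}(\zeta_3)\) is trivial, so \(\varrho_3=0\) and the bound \(C\le\min(\varrho_p,m-1)\) of Remark~\ref{rmk:Estimates} forces \(C=0\). The fundamental equation therefore collapses to \(U+1=A+R\), which is exactly the middle column of the asserted table.

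Next I would bound the two remaining free parameters. From Dirichlet's unit theorem and the Herbrand/Hasse identity \eqref{eqn:Herbrand} one reads off \(0\le U\le r_1+r_2-\theta=1\), so \(U\in\{0,1\}\) and \(U+1\in\{1,2\}\). From the metacyclic estimates of Remark~\ref{rmk:Estimates}, the radical \(\sqrt[3]{D}\) contributes a nonzero absolute differential principal factor, giving \(A\ge 1\), while \(0\le R\le m-1=1\). A short case analysis then exhausts the possibilities: for \(U=1\) the equation \(A+R=2\) with \(A\ge 1\) and \(R\le 1\) yields either \((A,R)=(1,1)\) (type \(\alpha\)) or \((A,R)=(2,0)\) (type \(\beta\)); for \(U=0\) the equation \(A+R=1\) with \(A\ge 1\) forces \((A,R)=(1,0)\) (type \(\gamma\)). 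Since the three triples \((U;A,R)=(1;1,1),(1;2,0),(0;1,0)\) are pairwise distinct, each field \(N\) lies in precisely one type, as claimed.

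The main obstacle I anticipate is not the enumeration itself, which is mechanical, but the rigorous justification of the two structural inputs that make it finite: namely the vanishing \(C=0\), which hinges on the regularity of \(3\) (equivalently on \(\mathrm{Cl}_3(\mathbb{Q}(\zeta_3))=1\)), and the lower bound \(A\ge 1\), which requires identifying \(\langle\sqrt[3]{D}\rangle\) as a genuinely nontrivial class in \(\mathcal{P}_{L/\mathbb{Q}}/\mathcal{P}_{\mathbb{Q}}\) rather than one already absorbed into \(\mathcal{P}_{\mathbb{Q}}\). Both are supplied by the results cited above, so the theorem reduces to the bookkeeping just described; the only point left open, if realizability of all three rows is also wanted, would be to exhibit explicit radicands \(D\) attaining each type.
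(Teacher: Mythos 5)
Your derivation is correct, but it takes a genuinely different route from the one the paper records. The paper's proof of Theorem~\ref{thm:MainCubic} is essentially a citation: it attributes the classification to Barrucand and Cohn \cite{BaCo}, who originally listed four types, notes that Halter-Koch \cite{HK} eliminated their type II by a unit-theoretic argument, and refers to \cite[Thm.~6.2]{AMITA} for a new proof by different methods. You instead derive the table internally from the paper's own cohomological machinery: \(C=0\) because \(\mathrm{Cl}_3(\mathbb{Q}(\zeta_3))\) is trivial, \(A\ge 1\) because the radical \(\sqrt[3]{D}\) with \(D\ge 2\) cube free cannot lie in \(\mathcal{P}_{\mathbb{Q}}\) (Remark~\ref{rmk:Estimates}), \(0\le U\le 1\) from \eqref{eqn:Herbrand}, and then the fundamental equation \(U+1=A+R\) of Corollary~\ref{cor:Trichotomy} leaves exactly the three rows. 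This is the same style of argument the paper itself uses for the companion Theorems~\ref{thm:MainComplex} and \ref{thm:MainReal} (``Consequence of Cor.~\ref{cor:Trichotomy} and \ref{cor:Estimates}''), so it is consistent with the paper's framework; what it buys is a self-contained proof, at the cost of pushing the historically hard step --- the exclusion of Barrucand--Cohn's type II, i.e.\ the combination \(U=0\), \(A=2\) --- into the inequality \(A\le U+1\), which ultimately rests on Iwasawa's theorem and Hasse's Herbrand-quotient computation underlying Corollary~\ref{cor:Trichotomy}. The one point you leave open, as you acknowledge, is the realizability of all three rows by explicit radicands \(D\); the paper's cited sources supply that, and if the theorem is read as asserting only that every field falls into exactly one row, your argument already covers it.
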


\begin{proof}
A part of the proof is due to Barrucand and Cohn
\cite{BaCo}
who distinguished \(4\) different types,
\(\mathrm{I}\hat{=}\beta\), \(\mathrm{II}\), \(\mathrm{III}\hat{=}\alpha\), and \(\mathrm{IV}\hat{=}\gamma\).
However, Halter-Koch
\cite{HK}
showed the impossibility of one of these types, namely type \(\mathrm{II}\).
Our new proof with different methods is given in
\cite[Thm. 6.2]{AMITA}.
\end{proof}


\subsection{Differential principal factorization (DPF) types of real dihedral fields}
\label{ss:QuinticTypes}
\noindent
Now we state the classification theorem
for \textit{pure quintic} fields \(L=\mathbb{Q}(\sqrt[5]{D})\)
and their Galois closure \(N=\mathbb{Q}(\zeta_5,\sqrt[5]{D})\),
that is the metacyclic case \(p=5\).
The \textit{coarse} classification of \(N\)
according to the invariants \(U\) and \(A\) alone
is closely related to the
classification of \textit{totally real dihedral} fields of degree \(2p\) with any odd prime \(p\)
by Nicole Moser
\cite[Thm. III.5, p. 62]{Mo},
as illustrated in Figure
\ref{fig:MoserExtendedQuintic}.
The coarse types \(\alpha\), \(\beta\), \(\gamma\), \(\delta\), \(\varepsilon\)
are completely analogous in both cases.
Additional types \(\zeta\), \(\eta\), \(\vartheta\) are required for pure quintic fields,
because there arises the possibility that the primitive fifth root of unity \(\zeta_5\)
occurs as relative norm \(N_{N/K}(Z)\) of a unit \(Z\in U_N\).
Due to the existence of radicals in the pure quintic case,
the \(\mathbb{F}_p\)-dimension \(A\) of the vector space of absolute DPF
exceeds the corresponding dimension for totally real dihedral fields by one
(see Remark
\ref{rmk:Estimates}).


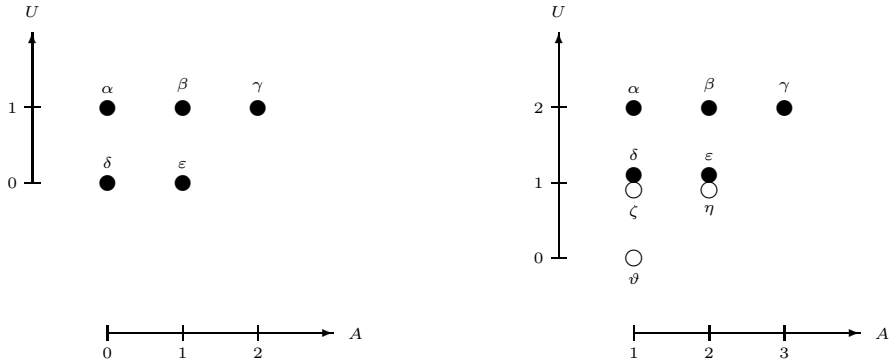
\begin{figure}[ht]
\caption{Classification of Totally Real Dihedral and Pure Quintic Fields}
\label{fig:MoserExtendedQuintic}

{\tiny

\setlength{\unitlength}{1.0cm}
\begin{picture}(15,5)(-11,-10)




\put(-9,-5.8){\makebox(0,0)[cb]{\(U\)}}
\put(-9,-7){\vector(0,1){1}}
\put(-9,-8){\line(0,1){1}}
\multiput(-9.1,-8)(0,1){2}{\line(1,0){0.2}}

\put(-9.2,-7){\makebox(0,0)[rc]{\(1\)}}
\put(-9.2,-8){\makebox(0,0)[rc]{\(0\)}}

\put(-4.8,-10){\makebox(0,0)[lc]{\(A\)}}
\put(-6,-10){\vector(1,0){1}}
\put(-8,-10){\line(1,0){2}}
\multiput(-8,-10.1)(1,0){3}{\line(0,1){0.2}}

\put(-8,-10.2){\makebox(0,0)[ct]{\(0\)}}
\put(-7,-10.2){\makebox(0,0)[ct]{\(1\)}}
\put(-6,-10.2){\makebox(0,0)[ct]{\(2\)}}


\put(-8,-7){\circle*{0.2}}
\put(-8,-6.8){\makebox(0,0)[cb]{\(\alpha\)}}
\put(-7,-7){\circle*{0.2}}
\put(-7,-6.8){\makebox(0,0)[cb]{\(\beta\)}}
\put(-6,-7){\circle*{0.2}}
\put(-6,-6.8){\makebox(0,0)[cb]{\(\gamma\)}}

\put(-8,-8){\circle*{0.2}}
\put(-8,-7.8){\makebox(0,0)[cb]{\(\delta\)}}
\put(-7,-8){\circle*{0.2}}
\put(-7,-7.8){\makebox(0,0)[cb]{\(\varepsilon\)}}




\put(-2,-5.8){\makebox(0,0)[cb]{\(U\)}}
\put(-2,-7){\vector(0,1){1}}
\put(-2,-9){\line(0,1){2}}
\multiput(-2.1,-9)(0,1){3}{\line(1,0){0.2}}

\put(-2.2,-7){\makebox(0,0)[rc]{\(2\)}}
\put(-2.2,-8){\makebox(0,0)[rc]{\(1\)}}
\put(-2.2,-9){\makebox(0,0)[rc]{\(0\)}}

\put(2.2,-10){\makebox(0,0)[lc]{\(A\)}}
\put(1,-10){\vector(1,0){1}}
\put(-1,-10){\line(1,0){2}}
\multiput(-1,-10.1)(1,0){3}{\line(0,1){0.2}}

\put(-1,-10.2){\makebox(0,0)[ct]{\(1\)}}
\put(0,-10.2){\makebox(0,0)[ct]{\(2\)}}
\put(1,-10.2){\makebox(0,0)[ct]{\(3\)}}


\put(-1,-7){\circle*{0.2}}
\put(-1,-6.8){\makebox(0,0)[cb]{\(\alpha\)}}
\put(0,-7){\circle*{0.2}}
\put(0,-6.8){\makebox(0,0)[cb]{\(\beta\)}}
\put(1,-7){\circle*{0.2}}
\put(1,-6.8){\makebox(0,0)[cb]{\(\gamma\)}}

\put(-1,-7.9){\circle*{0.2}}
\put(-1,-7.7){\makebox(0,0)[cb]{\(\delta\)}}
\put(0,-7.9){\circle*{0.2}}
\put(0,-7.7){\makebox(0,0)[cb]{\(\varepsilon\)}}
\put(-1,-8.1){\circle{0.2}}
\put(-1,-8.3){\makebox(0,0)[ct]{\(\zeta\)}}
\put(0,-8.1){\circle{0.2}}
\put(0,-8.3){\makebox(0,0)[ct]{\(\eta\)}}

\put(-1,-9){\circle{0.2}}
\put(-1,-9.2){\makebox(0,0)[ct]{\(\vartheta\)}}


\end{picture}
}
\end{figure}


\noindent
The \textit{fine} classification of \(N\) according to the invariants \(U\), \(A\), \(R\) and \(C\)
in the totally real dihedral situation with \(U+1=A+R+C\)
splits type \(\alpha\) with \(U=1\), \(A=0\) further in
type \(\alpha_1\) with \(C=2\) (double capitulation),
type \(\alpha_2\) with \(C=R=1\) (mixed capitulation and relative DPF),
type \(\alpha_3\) with \(R=2\) (double relative DPF),
type \(\beta\) with \(U=A=1\) in
type \(\beta_1\) with \(C=1\) (capitulation),
type \(\beta_2\) with \(R=1\) (relative DPF), and
type \(\delta\) with \(U=A=0\) in
type \(\delta_1\) with \(C=1\) (capitulation),
type \(\delta_2\) with \(R=1\) (relative DPF).
In the pure quintic situation with \(U+1=A+I+R\)
\cite{Ma2018},
however, we arrive at the second of the following theorems
where we oppose the two classifications.

\begin{theorem}
\label{thm:MainReal}
Each totally real dihedral field \(N/\mathbb{Q}\)
of absolute degree \(\lbrack N:\mathbb{Q}\rbrack=2p\) with an odd prime \(p\)
belongs to precisely one of the following \(9\) differential principal factorization types,
in dependence on the invariant \(U\) and the triplet \((A,R,C)\).


\renewcommand{\arraystretch}{1.1}

\begin{table}[ht]
\label{tbl:RealDPFTypes}
\begin{center}
\begin{tabular}{|c||c||c||ccc|}
\hline
 Type           & \(U\) & \(U+1=A+R+C\) & \(A\) & \(R\) & \(C\) \\
\hline
\(\alpha_1\)    & \(1\) & \(2\) & \(0\) & \(0\) & \(2\) \\
\(\alpha_2\)    & \(1\) & \(2\) & \(0\) & \(1\) & \(1\) \\
\(\alpha_3\)    & \(1\) & \(2\) & \(0\) & \(2\) & \(0\) \\
\(\beta_1\)     & \(1\) & \(2\) & \(1\) & \(0\) & \(1\) \\
\(\beta_2\)     & \(1\) & \(2\) & \(1\) & \(1\) & \(0\) \\
\(\gamma\)      & \(1\) & \(2\) & \(2\) & \(0\) & \(0\) \\
\hline
\(\delta_1\)    & \(0\) & \(1\) & \(0\) & \(0\) & \(1\) \\
\(\delta_2\)    & \(0\) & \(1\) & \(0\) & \(1\) & \(0\) \\
\(\varepsilon\) & \(0\) & \(1\) & \(1\) & \(0\) & \(0\) \\
\hline
\end{tabular}
\end{center}
\end{table}


\end{theorem}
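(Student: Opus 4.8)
The plan is to mirror the proof of the analogous Theorem \ref{thm:MainComplex}, reducing the entire classification to the fundamental equation of Corollary \ref{cor:Trichotomy} together with the cohomological bounds of Corollary \ref{cor:Estimates}. The one arithmetic input that separates the present situation from that of Theorem \ref{thm:MainComplex} is the value of the cohomological maximum $m$, so I would begin by pinning it down. Since $N/\mathbb{Q}$ is \emph{totally real} dihedral of degree $2p$, its quadratic subfield $K$ is real quadratic with signature $(r_1,r_2)=(2,0)$. Because $p\ge 3$ is odd, $\mathbb{Q}(\zeta_p)$ is totally complex, so the real field $K$ cannot contain the $p$th roots of unity; hence $\theta=1$ and $m=1+r_1+r_2-\theta=2$. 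The exceptional case $K=\mathbb{Q}(\sqrt{-3})$, $p=3$ of Corollary \ref{cor:Estimates} is imaginary and therefore does not intervene here.

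Next, Corollary \ref{cor:Trichotomy} supplies the fundamental equation $U+1=A+R+C$ for the cyclic degree-$p$ extension $N/K$, while Corollary \ref{cor:Estimates} gives $A,R,C\ge 0$ together with $U+1\le m=2$, whence $U\in\{0,1\}$. It then remains only to enumerate the non-negative integer triples $(A,R,C)$ with $A+R+C=U+1$, of which a stars-and-bars count produces $\binom{U+3}{2}$. For $U=1$ the six triples $(0,0,2),(0,1,1),(0,2,0),(1,0,1),(1,1,0),(2,0,0)$ give the rows $\alpha_1,\alpha_2,\alpha_3,\beta_1,\beta_2,\gamma$; for $U=0$ the three triples $(0,0,1),(0,1,0),(1,0,0)$ give the rows $\delta_1,\delta_2,\varepsilon$. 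Since the sum is at most $2$, every entry is automatically $\le 2$, so the individual upper bounds of Corollary \ref{cor:Estimates} impose no further restriction and the table is exhausted by exactly these $9$ rows.

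To justify the word \emph{precisely}, I would observe that $U,A,R,C$ are intrinsically attached to $N/K$: the integers $A,R,C$ are the $\mathbb{F}_p$-dimensions of the three canonical summands of the trichotomic decomposition of Theorem \ref{thm:Trichotomy}, and $U$ is the $p$-adic valuation of the unit-norm index $(U_K:N_{N/K}(U_N))$. Thus every field determines a single quadruple $(U,A,R,C)$ and lies in at most one type, while completeness is the enumeration above. Finally the coarse pairs $(U,A)$ reproduce Moser's five types $\alpha,\beta,\gamma,\delta,\varepsilon$ of \cite[Thm. III.5, p. 62]{Mo} (cf. \cite{Ma1991}), which fixes the labelling, the refinement by $(R,C)$ being then immediate. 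Given the two corollaries, there is essentially no computational obstacle; I expect the only genuinely delicate point to be this identification of the coarse classification with Moser's, and—upstream of it—the verification that $\theta=1$ forces $m=2$, everything else being combinatorics forced by that single value.
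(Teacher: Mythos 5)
Your proposal is correct and follows essentially the same route as the paper, whose proof consists precisely of invoking Corollaries \ref{cor:Trichotomy} and \ref{cor:Estimates} together with the comparison to Moser's coarse classification in \cite[Thm.\ III.5, p.\ 62]{Mo} and \cite{Ma1991}. Your write-up merely makes explicit the computation \(m=1+r_1+r_2-\theta=2\) for real quadratic \(K\) and the stars-and-bars enumeration of the triples \((A,R,C)\), which the paper leaves implicit.
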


\begin{proof}
Consequence of the Corollaries
\ref{cor:Trichotomy}
and 
\ref{cor:Estimates}.
See also \cite[Thm. III.5, p. 62]{Mo} and \cite{Ma1991}.
\end{proof}


\begin{theorem}
\label{thm:MainQuintic}
Each pure metacyclic field \(N=\mathbb{Q}(\zeta_5,\sqrt[5]{D})\)
of absolute degree \(\lbrack N:\mathbb{Q}\rbrack=20\)
with \(5\)-th power free radicand \(D\in\mathbb{Z}\), \(D\ge 2\),
belongs to precisely one of the following \(13\) differential principal factorization types,
in dependence on the invariant \(U\) and the triplet \((A,I,R)\).


\renewcommand{\arraystretch}{1.1}

\begin{table}[ht]
\label{tbl:QuinticDPFTypes}
\begin{center}
\begin{tabular}{|c||c||c||ccc|}
\hline
 Type           & \(U\) & \(U+1=A+I+R\) & \(A\) & \(I\) & \(R\) \\
\hline
\(\alpha_1\)    & \(2\) & \(3\) & \(1\) & \(0\) & \(2\) \\
\(\alpha_2\)    & \(2\) & \(3\) & \(1\) & \(1\) & \(1\) \\
\(\alpha_3\)    & \(2\) & \(3\) & \(1\) & \(2\) & \(0\) \\
\(\beta_1\)     & \(2\) & \(3\) & \(2\) & \(0\) & \(1\) \\
\(\beta_2\)     & \(2\) & \(3\) & \(2\) & \(1\) & \(0\) \\
\(\gamma\)      & \(2\) & \(3\) & \(3\) & \(0\) & \(0\) \\
\hline
\(\delta_1\)    & \(1\) & \(2\) & \(1\) & \(0\) & \(1\) \\
\(\delta_2\)    & \(1\) & \(2\) & \(1\) & \(1\) & \(0\) \\
\(\varepsilon\) & \(1\) & \(2\) & \(2\) & \(0\) & \(0\) \\
 \hline
\(\zeta_1\)     & \(1\) & \(2\) & \(1\) & \(0\) & \(1\) \\
\(\zeta_2\)     & \(1\) & \(2\) & \(1\) & \(1\) & \(0\) \\
\(\eta\)        & \(1\) & \(2\) & \(2\) & \(0\) & \(0\) \\
 \hline
\(\vartheta\)   & \(0\) & \(1\) & \(1\) & \(0\) & \(0\) \\
\hline
\end{tabular}
\end{center}
\end{table}


The types \(\delta_1\), \(\delta_2\), \(\varepsilon\)
are characterized additionally by \(\zeta_5\not\in N_{N/K}(U_N)\),
and the types \(\zeta_1\), \(\zeta_2\), \(\eta\)
by \(\zeta_5\in N_{N/K}(U_N)\).
\end{theorem}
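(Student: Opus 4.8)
The plan is to deduce the classification as a specialization of the trichotomy machinery of \S\ref{s:Trichotomy} to the cyclotomic base field \(K=\mathbb{Q}(\zeta_5)\), refined by the extra structure available in the metacyclic situation. First I would record the cohomological data of \(K\): its signature is \((r_1,r_2)=(0,2)\) and, since \(K\) contains the fifth roots of unity, \(\theta=0\), whence by Corollary \ref{cor:Estimates} (in the metacyclic form of Remark \ref{rmk:Estimates}) the cohomological maximum is \(m=\tfrac{p+1}{2}=3\) and the norm index exponent is confined to \(0\le U\le 2\). Because \(5\) is a regular prime, the class number of \(\mathbb{Q}(\zeta_5)\) equals \(1\), so \(\varrho_5=0\) and the capitulation dimension vanishes, \(C=0\); on the other hand, the radical subspace \(\Delta=\langle\sqrt[5]{D}\rangle\) of Remark \ref{rmk:Estimates} forces \(A\ge 1\).

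Next I would invoke the refined decomposition of \(\mathcal{P}_{N/K}/\mathcal{P}_K\) valid for pure metacyclic fields, in which the vanishing capitulation contribution is replaced by an intermediate contribution of dimension \(I\); this yields the fundamental equation \(U+1=A+I+R\) of \cite{Ma2018} in place of the equation \(U+1=A+R+C\) of Corollary \ref{cor:Trichotomy}. With the equation and the bounds \(A\ge 1\), \(I\ge 0\), \(R\ge 0\) in hand, the enumeration becomes purely combinatorial. For \(U=2\) one solves \(A+I+R=3\) with \(A\ge 1\), giving the six triples \((A,I,R)\in\{(1,0,2),(1,1,1),(1,2,0),(2,0,1),(2,1,0),(3,0,0)\}\), i.e. the types \(\alpha_1,\alpha_2,\alpha_3,\beta_1,\beta_2,\gamma\); for \(U=0\) the single solution \((1,0,0)\) gives the type \(\vartheta\).

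The decisive step concerns \(U=1\), where \(A+I+R=2\) with \(A\ge1\) admits only the three triples \((1,0,1),(1,1,0),(2,0,0)\), yet the theorem asserts six types. Here I would introduce the supplementary invariant recording whether the primitive fifth root of unity is a relative norm of a unit, that is whether \(\zeta_5\in N_{N/K}(U_N)\). Since \(\mathrm{H}^0(G,U_N)\simeq U_K/N_{N/K}(U_N)\) has order \(p^U=5\) when \(U=1\), the image of \(\zeta_5\) in this cyclic group of order \(5\) is either trivial or a generator; the two alternatives split each of the three triples into the pairs \((\delta_i,\zeta_i)\), respectively \((\varepsilon,\eta)\), exactly as tabulated. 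For \(U=0\) the quotient is trivial, so \(\zeta_5\) is automatically a norm, and for \(U=2\) the triples \((A,I,R)\) already separate all six types, so no further bifurcation is needed; I would then match each case to its tabulated values and confirm completeness.

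The hard part will be the \(U=1\) bifurcation: one must show that the norm condition \(\zeta_5\in N_{N/K}(U_N)\) is genuinely independent of the dimension triple \((A,I,R)\), so that both members of each pair actually occur, and that it is faithfully captured by the position of \(\zeta_5\) in \(\mathrm{H}^0(G,U_N)\). This rests on the Hasse--Iwasawa cohomological relations of \eqref{eqn:Herbrand} together with the analysis of the norm map on the units of \(\mathbb{Q}(\zeta_5)\) carried out in \cite{Ma2018}; realizability of all thirteen types would finally be secured by exhibiting explicit radicands \(D\) for each, as in the computational verification underlying the subsequent sections.
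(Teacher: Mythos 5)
The paper's own ``proof'' of this theorem is a single sentence citing \cite[Thm.\ 6.1]{Ma2018}; it contains no argument. Your proposal therefore takes a genuinely different route in form, since you actually derive the classification from the machinery of \S\ \ref{s:Trichotomy}: the bounds \(0\le U\le 2\) with \(m=\frac{p+1}{2}=3\), the vanishing of \(C\) by regularity of \(5\), the forced inequality \(A\ge 1\) from the radical subspace \(\Delta\), the combinatorial enumeration of \((A,I,R)\) under \(U+1=A+I+R\), and the bifurcation of the \(U=1\) stratum by the image of \(\zeta_5\) in \(\mathrm{H}^0(G,U_N)\simeq C_5\) are all correct and are exactly the inputs recorded in Remark \ref{rmk:Estimates} and \S\ \ref{ss:QuinticTypes}. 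What your route buys is a transparent reduction of the theorem to two facts --- the modified fundamental equation \(U+1=A+I+R\) for pure metacyclic fields, and the analysis of \(\zeta_5\) under \(N_{N/K}\) --- but both of these are themselves only available from \cite{Ma2018}, so in substance you are reconstructing the skeleton of the cited proof rather than replacing it. Two steps are thinner than the rest: you never say what the invariant \(I\) is or why it legitimately replaces the capitulation component \(C\) in the trichotomy of Theorem \ref{thm:Trichotomy} (the paper itself only gestures at ``intermediate fields'' in Figure \ref{fig:DihedralMetacyclic} before deferring to \cite{Ma2018}); and your assertion that the \(U=2\) stratum needs no \(\zeta_5\)-bifurcation is justified only by the labelling (``the triples already separate all six types'') rather than by showing the norm condition is determined, or irrelevant, when \(\#\mathrm{H}^0(G,U_N)=25\). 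Neither defect breaks the statement as phrased, since ``belongs to precisely one of'' demands only exhaustiveness and disjointness, which your enumeration delivers; realizability of all thirteen types, which you rightly flag as the hard part, is not actually asserted by the theorem and is in any case settled computationally in \cite{Ma2018}.
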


\begin{proof}
The proof is given in
\cite[Thm. 6.1]{Ma2018}.
\end{proof}

\newpage

\section{Complete verification of the conjecture of Scholz}
\label{s:ScholzConjecture}
\noindent
Let \(L\) be a non-cyclic \textit{totally real cubic field}.
Then \(L\) is non-Galois over the rational number field \(\mathbb{Q}\)
with two conjugate fields \(L^\prime\) and \(L^{\prime\prime}\).
The Galois closure \(N\) of \(L\) is a totally real dihedral field of degree \(6\),
which contains a unique real quadratic field \(K\),
as illustrated in Figure
\ref{fig:AbsoluteSubfields}.


\begin{figure}[h]
\caption{Hasse Subfield Diagram of \(N/\mathbb{Q}\)}
\label{fig:AbsoluteSubfields}

{\small

\setlength{\unitlength}{1.0cm}
\begin{picture}(5,5)(-7,-9.4)



\put(-6,-9){\circle*{0.2}}
\put(-6,-9.2){\makebox(0,0)[ct]{\(\mathbb{Q}=L\cap K\)}}
\put(-7,-9){\makebox(0,0)[rc]{rational number field}}

\put(-6,-9){\line(2,1){2}}
\put(-5,-8.7){\makebox(0,0)[lt]{\(\lbrack K:\mathbb{Q}\rbrack=2\)}}

\put(-4,-8){\circle*{0.2}}
\put(-4,-8.2){\makebox(0,0)[ct]{\(K\)}}
\put(-3,-8){\makebox(0,0)[lc]{quadratic field}}


\put(-6.2,-7.5){\makebox(0,0)[rc]{\(\lbrack L:\mathbb{Q}\rbrack=3\)}}
\put(-6,-9){\line(0,1){3}}
\put(-4,-8){\line(0,1){3}}



\put(-6,-6){\circle{0.2}}
\put(-6,-5.8){\makebox(0,0)[cb]{\(L\)}}
\put(-5.8,-6){\makebox(0,0)[lt]{\(L^\prime,L^{\prime\prime}\)}}
\put(-7,-6){\makebox(0,0)[rc]{three conjugate cubic fields}}

\put(-6,-6){\line(2,1){2}}

\put(-4,-5){\circle*{0.2}}
\put(-4,-4.8){\makebox(0,0)[cb]{\(N=L\cdot K\)}}
\put(-3,-5){\makebox(0,0)[lc]{dihedral field of degree \(6\)}}


\end{picture}

}

\end{figure}
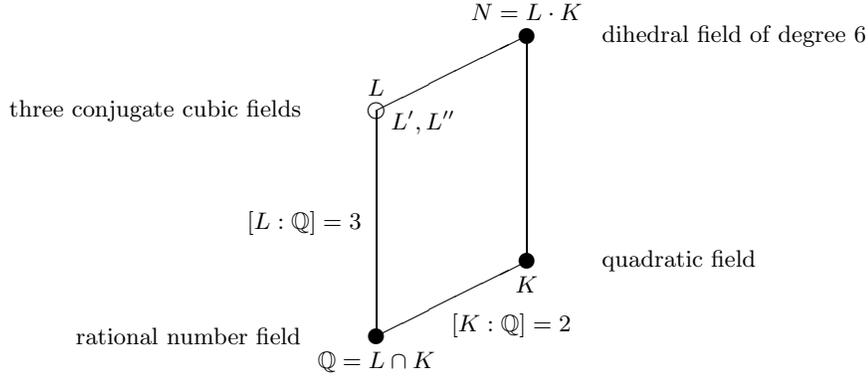


In \(1930\), Helmut Hasse
\cite{Ha1930}
determined the \textit{discriminants} of \(L\)
\cite[pp. 567 and 575]{Ha1930}
and \(N\)
\cite[p. 566]{Ha1930},
in dependence on the discriminant of \(K\)
and on the class field theoretic \textit{conductor} \(f=f(N/K)\)
of the cyclic cubic, and thus abelian, relative extension \(N/K\):
\begin{equation}
\label{eqn:Hasse}
d_L=f^2\cdot d_K, \quad \text{ and } \quad d_N=f^4\cdot d_K^3.
\end{equation}

Three years later, in \(1933\), Arnold Scholz
\cite[p. 216]{So}
determined the \textit{relation}
\begin{equation}
\label{eqn:Scholz}
h_N=\frac{I}{9}\cdot h_L^2\cdot h_K
\end{equation}
\textit{between the class numbers} of the fields \(N\), \(L\) and \(K\),
in dependence on the \textit{index of subfield units}, \(I=(U_N:U_0)=3^E\),
where \(U_0=\langle U_K,U_L,U_{L^\prime},U_{L^{\prime\prime}}\rangle\)
and \(E\in\lbrace 0,1,2\rbrace\).

Note that \(E=0\), resp. \(I=1\), is the \textit{distinguished case}
where the unit group \(U_N\) of the normal field \(N\)
is entirely generated by all proper subfield units, that is \(U_N=U_0\).


Scholz was able to give explicit numerical examples
\cite[p. 216]{So},
for \(E=1\) (\(d_L=229\)),
resp. \(E=2\) (\(d_L=148\)),
but not for \(E=0\),
and he formulated the following hypothesis.

\begin{conjecture}
\label{cnj:Scholz}
(The Conjecture of Scholz, \(1933\), illustrated in Figure
\ref{fig:RingClassFields}) \\
There should exist non-Galois totally real cubic fields \(L\)
whose Galois closure \(N\) is either 
\begin{enumerate}
\item
\textit{unramified}, with conductor \(f=1\),
over some real quadratic field \(K\) with \(3\)-class rank \(\varrho_3(K)=2\)
whose complete \(3\)-elementary class group capitulates in \(N\),
and such that \(U_N=U_0\)
\cite[p. 219]{So},
or
\item
\textit{ramified}, with conductor \(f>1\),
over some real quadratic field \(K\)
such that also \(U_N=U_0\)
\cite[p. 221]{So}
(here, Scholz calls \(N\) a \textit{ring class field} over \(K\), by abuse of language).
\end{enumerate}
\end{conjecture}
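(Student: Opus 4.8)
The plan is to begin by \emph{refining} the conjecture, turning the unit-theoretic equality \(U_N=U_0\), that is \(E=0\) in \(I=(U_N:U_0)=3^E\), into a statement about the trichotomy of \S\ref{s:Trichotomy}. Using the Brauer factorization \(\zeta_N=\zeta_K\zeta_L^2\zeta_{\mathbb{Q}}^{-2}\) together with the analytic class number formula and Hasse's discriminant relations \eqref{eqn:Hasse} (which make the archimedean factors cancel exactly), I would first derive the regulator identity \(I=9R_KR_L^2/R_N\); comparison with \eqref{eqn:Scholz} then exhibits \(I\) as the covolume index of the subfield lattice \(U_0\subseteq U_N\). Next I would compute this index cohomologically: by Iwasawa's isomorphism in \eqref{eqn:Herbrand} the space \(\mathrm{H}^1(G,U_N)\simeq\mathcal{P}_{N/K}/\mathcal{P}_K\) has dimension \(U+1\), and feeding the capitulation kernel of Theorem \ref{thm:Trichotomy} into Chevalley's ambiguous class number formula for the cyclic cubic extension \(N/K\) should yield the clean identity
\[
E=m-C,\qquad m=1+r_1+r_2-\theta=2,
\]
where \(C=\dim_{\mathbb{F}_3}\ker(T_{N/K})\). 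In particular \(U_N=U_0\) becomes equivalent to \(C=2\), the maximal value permitted by Corollary \ref{cor:Estimates}.

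Granting this dictionary, the conjecture collapses to a single DPF-type statement. Since \(C=2\) together with the fundamental equation \(U+1=A+R+C\) of Corollary \ref{cor:Trichotomy} and the bound \(U+1\le m=2\) forces \(A=R=0\) and \(U=1\), the distinguished case \(U_N=U_0\) is realized \emph{exactly} by the fields of type \(\alpha_1\) in Theorem \ref{thm:MainReal}, and this is the \emph{unique} type with \(C=2\). Moreover \(C\le\min(\varrho_3,m)\) with \(m=2\) shows that \(E=0\) forces \(\varrho_3(K)\ge 2\). Both alternatives of the conjecture are thereby reduced to producing type-\(\alpha_1\) fields over a real quadratic \(K\) with \(3\)-class group of type \((3,3)\) whose entire \(3\)-class group capitulates; the two cases differ only through the conductor, \(f=1\) in alternative (1) and \(f>1\) in alternative (2). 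The ramified case additionally requires the differential factors supported on the primes dividing \(f\) to be non-principal, which is precisely the content of \(A=R=0\).

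For the existence part I would proceed by explicit construction and verification. In alternative (1) I would draw on the known register of real quadratic fields with \(3\)-class group \((3,3)\) and total \(3\)-capitulation in their unramified cyclic cubic extension, confirming for selected examples that the capitulation kernel is all of \(\mathrm{Cl}_3(K)\) and hence, by the dictionary, that \(U_N=U_0\). In alternative (2) I would realize the candidate fields \(N\) as the ramified \(3\)-ring class fields modulo a \(p\)-admissible prime conductor \(q\) attached to the heterogeneous multiplets of \S\ref{s:Multiplets}, running a systematic search ordered by the discriminant \(d_L=f^2d_K\): for each member one computes \(\mathrm{Cl}_3(K)\), the transfer kernel \(\ker(T_{N/K})\), and the DPF data \((A,R,C)\), retaining those of type \(\alpha_1\); for the survivors one certifies \(U_N=U_0\) directly from a system of fundamental units. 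Exhibiting even one ramified example with \(f>1\) settles the affirmative.

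The hard part will be the refinement itself: certifying that the transcendentally defined quotient \(9R_KR_L^2/R_N\) equals the algebraically defined power \(3^{\,m-C}\) demands a careful analysis of \(U_N\) as a \(\mathbb{Z}[S_3]\)-module, in order to reconcile the norm-residue index \((U_K:N_{N/K}(U_N))\) controlling \(\mathrm{H}^0(G,U_N)\) with the capitulation contribution to \(\mathrm{H}^1(G,U_N)\); the delicate point is that the deficiency \(m-(U+1)\) of the cohomological maximum must be shown to migrate precisely into the unit index. On the computational side the genuine obstacle is that \(U_N=U_0\) is an equality of index \emph{one}, not a mere divisibility, so one must prove that the four subfield unit groups generate the full \(U_N\); since this case eluded Scholz by hand, I expect the first ramified witness with \(E=0\) to surface only after a nontrivial search range.
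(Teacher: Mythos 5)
Your central ``dictionary'' is wrong, and the error is not cosmetic. You claim \(E=m-C\), hence \(U_N=U_0\iff C=2\iff\) type \(\alpha_1\), and deduce that \(E=0\) forces \(\varrho_3(K)\ge 2\). The correct relation, which follows from Moser's classification as quoted in the paper (\((U_N:U_0)=1\iff\) type \(\alpha\), i.e.\ \(U=1\) and \(A=0\)) combined with the fundamental equation \(U+1=A+R+C\), is
\[
E \;=\; A+1-U \;=\; m-(R+C),
\]
so \(U_N=U_0\) is equivalent to \(R+C=2\), not to \(C=2\). Type \(\alpha\) splits into the three subtypes \(\alpha_1,\alpha_2,\alpha_3\) with \((C,R)\in\{(2,0),(1,1),(0,2)\}\), \emph{all} of which have \(U_N=U_0\); in particular the capitulation dimension may vanish. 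Your conclusion that \(E=0\) forces \(\varrho_3(K)\ge 2\) is directly refuted by Theorem \ref{thm:Alpha3Ramified}: for \(d_L=146\,853=63^2\cdot 37\) the field \(K=\mathbb{Q}(\sqrt{37})\) has trivial \(3\)-class group, there is no capitulation at all (\(C=0\)), yet \(U_N=U_0\) because the two split prime divisors of \(f=63\) supply a two-dimensional space of relative principal factors (\(R=2\)). The whole point of the paper's DPF refinement is that no constraint on \(\varrho_3(K)\) is needed in the ramified case. The missing \(R\)-term is exactly where your cohomological argument breaks down: \(\mathrm{H}^1(G,U_N)\simeq\mathcal{P}_{N/K}/\mathcal{P}_K\) has dimension \(U+1=A+R+C\), and the unit index records the deficiency of \(R+C\), not of \(C\) alone.

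The bare existence claims of the Conjecture would survive your narrower program, since the implication you actually need for part (1) --- full capitulation, i.e.\ \(C=2\), forces \(A=R=0\), \(U=1\), type \(\alpha_1\), hence \(U_N=U_0\) --- is valid, and ramified type-\(\alpha_1\) fields do exist. But you exhibit no witness, and restricting the ramified search to type \(\alpha_1\) makes it needlessly hard: the paper settles part (2) three times over, via type \(\alpha_3\) (\(d_L=146\,853\), already present in the 1991 tables), type \(\alpha_2\) (\(d_L=966\,397\)) and type \(\alpha_1\) (\(d_L=18\,251\,060\)), and the smallest, historically first witness is precisely of the type your dictionary excludes from the locus \(U_N=U_0\). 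Part (1) is settled in the paper by citing the Heider--Schmithals and Mayer computations (\(d_K=32\,009\), capitulation number \(\nu(K)=3\)); your plan for that half coincides with the paper's once the dictionary is repaired.
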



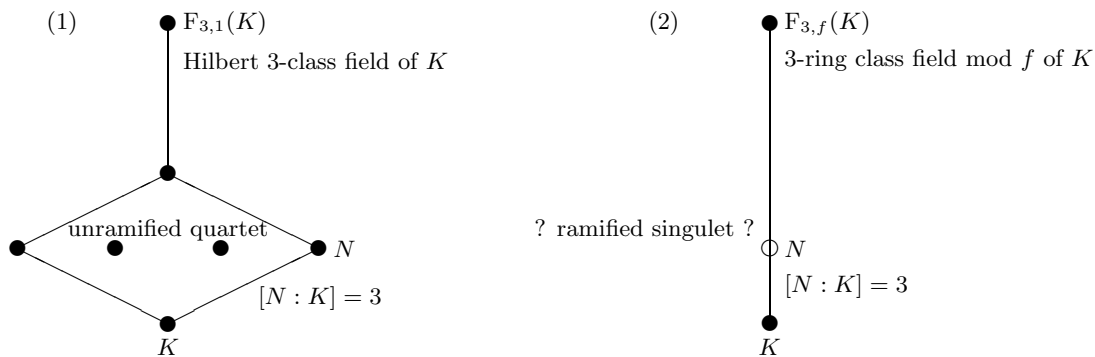
\begin{figure}[h]
\caption{Hilbert and Ring Class Fields over \(K\)}
\label{fig:RingClassFields}

{\small

\setlength{\unitlength}{1.0cm}
\begin{picture}(10,6)(-8,-10)



\put(-8,-9){\circle*{0.2}}
\put(-8,-9.2){\makebox(0,0)[ct]{\(K\)}}

\put(-8,-9){\line(-2,1){2}}
\put(-8,-9){\line(2,1){2}}
\put(-6.8,-8.5){\makebox(0,0)[lt]{\(\lbrack N:K\rbrack=3\)}}

\put(-10,-8){\line(2,1){2}}
\put(-6,-8){\line(-2,1){2}}

\put(-8,-7.9){\makebox(0,0)[cb]{unramified quartet}}
\put(-10,-8){\circle*{0.2}}
\put(-8.7,-8){\circle*{0.2}}
\put(-7.3,-8){\circle*{0.2}}
\put(-6,-8){\circle*{0.2}}
\put(-5.8,-8){\makebox(0,0)[lc]{\(N\)}}

\put(-8,-7){\circle*{0.2}}

\put(-8,-7){\line(0,1){2}}

\put(-9.2,-5){\makebox(0,0)[rc]{(1)}}
\put(-8,-5){\circle*{0.2}}
\put(-7.8,-5){\makebox(0,0)[lc]{\(\mathrm{F}_{3,1}(K)\)}}
\put(-7.8,-5.5){\makebox(0,0)[lc]{Hilbert \(3\)-class field of \(K\)}}



\put(0,-9){\circle*{0.2}}
\put(0,-9.2){\makebox(0,0)[ct]{\(K\)}}

\put(0,-9){\line(0,1){1}}
\put(0.2,-8.5){\makebox(0,0)[lc]{\(\lbrack N:K\rbrack=3\)}}
\put(0,-8){\line(0,1){3}}

\put(-0.2,-7.9){\makebox(0,0)[rb]{? ramified singulet ?}}
\put(0,-8){\circle{0.2}}
\put(0.2,-8){\makebox(0,0)[lc]{\(N\)}}

\put(-1.2,-5){\makebox(0,0)[rc]{(2)}}
\put(0,-5){\circle*{0.2}}
\put(0.2,-5){\makebox(0,0)[lc]{\(\mathrm{F}_{3,f}(K)\)}}
\put(0.2,-5.5){\makebox(0,0)[lc]{\(3\)-ring class field mod \(f\) of \(K\)}}


\end{picture}

}

\end{figure}


\noindent
We point out that, in the \textit{unramified} situation \(f=1\),
\(d_L=d_K\) is a quadratic fundamental discriminant, and
\(d_N=d_K^3\) is a perfect cube,
according to formula
\eqref{eqn:Hasse}.
In this unramified case,
the verification of Conjecture
\ref{cnj:Scholz}
can be obtained from a more general theorem, since
any real quadratic field \(K\) with \(3\)-class rank \(\varrho_3(K)=2\)
possesses a multiplet of four unramified cyclic cubic extensions
\(N_1,\ldots,N_4\),
that is a \textit{quartet} of absolutely dihedral fields of degree \(6\)
\cite{Ma2012}
with non-Galois totally real subfields \(L_1,\ldots,L_4\),
each of them selected among three conjugate fields.


For such a quartet,
Chang and Foote
\cite{ChFt}
introduced the concept of the \textit{capitulation number} \(0\le\nu(K)\le 4\),
defined as the number of those members of the quartet
in which the complete \(3\)-elementary class group of \(K\) capitulates.
For this number \(\nu(K)\), the following theorem holds.

\begin{theorem}
\label{thm:Alpha1Unramified}
For each value \(0\le\nu\le 4\),
there exists a real quadratic field \(K\) with \(3\)-class rank \(\varrho_3(K)=2\)
and capitulation number \(\nu(K)=\nu\).
It is even possible to restrict the claim to fields with
elementary \(3\)-class group of type \(\mathrm{Cl}_3(K)\simeq C_3\times C_3\).
\end{theorem}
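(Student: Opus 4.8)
The plan is to reduce the capitulation number to a group-theoretic invariant and then to realize each of its five possible values by explicit quadratic fields. First I would invoke the Artin reciprocity law, already recalled in the introduction, to identify the quartet \(N_1,\ldots,N_4\) of unramified cyclic cubic extensions of \(K\) with the four maximal subgroups \(H_1,\ldots,H_4\) of index \(3\) in the metabelian group \(G=\mathrm{G}_3^2(K)\), whose abelianization satisfies \(G/G^\prime\simeq C_3\times C_3\). Complete capitulation of the \(3\)-elementary class group of \(K\) in \(N_i\) is then equivalent to the vanishing of the transfer \(T_{G,H_i}:\,G/G^\prime\to H_i/H_i^\prime\), that is, to \(\ker(T_{G,H_i})=G/G^\prime\). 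Consequently the capitulation number \(\nu(K)\) equals the number of \emph{total} transfer kernels occurring in the transfer kernel type \(\varkappa(G)=(k_1,k_2,k_3,k_4)\), where the entry \(k_i=0\) records a total kernel.

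This splits the theorem into two tasks. The first is purely group-theoretic: for each target \(\nu\in\{0,1,2,3,4\}\) I would exhibit a finite metabelian \(3\)-group \(G\) with \(G/G^\prime\simeq C_3\times C_3\) whose transfer kernel type contains exactly \(\nu\) vanishing entries. Here I would consult the classification of the pertinent \(3\)-groups produced by the \(p\)-group generation algorithm, selecting for each \(\nu\) a type whose Artin pattern is already tabulated, ranging from a type with four total kernels down to one with none. The second task is arithmetic realization: I would show that each chosen type actually occurs as \(\mathrm{G}_3^2(K)\) for a real quadratic field \(K\) with elementary class group \(\mathrm{Cl}_3(K)\simeq C_3\times C_3\). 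For this I would present, for every \(\nu\), a fundamental discriminant \(d_K>0\) together with its computed Artin pattern, the verification proceeding by determining \(\mathrm{Cl}_3(K)\), the four unramified cyclic cubic extensions \(N_i\), and the kernels \(\ker(T_{K,N_i})\).

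The step I expect to be the main obstacle is the arithmetic realization at the extreme value \(\nu=4\), where the entire \(3\)-elementary class group must capitulate in every member of the quartet; such fields are exceedingly sparse, so a suitable discriminant must be located by a systematic search over a large range, and one must confirm simultaneously that the capitulation is total in all four extensions and that \(\mathrm{Cl}_3(K)\) is genuinely elementary rather than of higher exponent. The opposite extreme \(\nu=0\) raises the same sparsity difficulty in milder form, whereas for the intermediate values \(\nu\in\{1,2,3\}\) the relevant types are comparatively frequent, so smallest-discriminant examples suffice and their verification is routine. Collecting the five discriminants into a single table, each annotated with its transfer kernel type and capitulation number, completes the proof, and since every exhibited example can be taken with \(\mathrm{Cl}_3(K)\simeq C_3\times C_3\), the supplementary claim about the elementary type is covered as well.
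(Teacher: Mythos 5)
Your proposal follows essentially the same route as the paper: translate the capitulation number $\nu(K)$ into the number of total transfer kernels in the transfer kernel type of $G=\mathrm{G}_3^2(K)$ via Artin reciprocity, and then settle each value $\nu\in\lbrace 0,\ldots,4\rbrace$ by exhibiting an explicit real quadratic discriminant with computed Artin pattern (the paper cites the minimal occurrences $d_K=62\,501,\ 32\,009,\ 710\,652,\ 534\,824,\ 214\,712$ for $\nu=4,3,2,1,0$ from its earlier computations). The only minor miscalibration is your expectation that $\nu=4$ is the sparse, hard case: historically the extremes $\nu\in\lbrace 3,4\rbrace$ were found first, already by Heider and Schmithals in 1982 with the comparatively small $d_K=32\,009$ and $62\,501$, whereas it is the values $\nu\in\lbrace 0,1,2\rbrace$ that required the substantially larger discriminants.
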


\newpage

\begin{proof}
From the viewpoint of finite \(p\)-group theory,
this theorem is a proven statement about the possible \textit{transfer kernel types}
of finite metabelian \(3\)-groups \(G\) with
abelianization \(G/G^\prime\simeq (3,3)\)
applied to the second \(3\)-class group \(G:=\mathrm{Gal}(F_3^2(K)/K)\) of \(K\)
\cite{Ma2012}.
However, it is easier to give explicit numerical paradigms for each value of \(\nu(K)\).
We have the following minimal occurrences: \\
\(\nu(K)=4\) for \(d_K=62\,501\), \\
\(\nu(K)=3\) for \(d_K=32\,009\), \\
\(\nu(K)=2\) for \(d_K=710\,652\), \\
\(\nu(K)=1\) for \(d_K=534\,824\), \\
\(\nu(K)=0\) for \(d_K=214\,712\), \\
which have been computed by ourselves in
\cite{Ma2012}.
This completes the proof of Theorem
\ref{thm:Alpha1Unramified}.
\end{proof}


\begin{remark}
\label{rmk:Alpha1Unramified}
We have the priority of discovering the first examples of
real quadratic fields \(K\) with \(\nu(K)\in\lbrace 0,1,2\rbrace\) in
\cite{Ma2012}.
However, the first examples of
real quadratic fields \(K\) with \(\nu(K)\in\lbrace 3,4\rbrace\)
are due to Heider and Schmithals
\cite{HeSm},
who performed a mainframe computation on the CDC Cyber of the University of Cologne,
and thus the following corollary is proven since \(1982\) already.
\end{remark}


\begin{corollary}
\label{cor:Alpha1Unramified}
(Verification of Conjecture \ref{cnj:Scholz}, (1) for \textbf{unramified} extensions; see Figure
\ref{fig:HilbertClassFieldQuartet}) \\
There exist non-Galois totally real cubic fields \(L\)
whose Galois closure \(N\) is unramified, with conductor \(f=1\),
over a real quadratic field \(K\) with \(3\)-class rank \(\varrho_3(K)=2\)
whose complete \(3\)-elementary class group capitulates in \(N\),
and which therefore has \(U_N=U_0\).
The minimal discriminant of such a field \(L\) is \(d_L=32\,009\)
(three of four fields, \cite{HeSm}, \(49\) years after \cite{So}).
\end{corollary}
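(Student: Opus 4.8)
The plan is to reduce Corollary \ref{cor:Alpha1Unramified} to the already-proven Theorem \ref{thm:Alpha1Unramified} together with the unit-index analysis furnished by the trichotomy machinery of \S\ref{s:Trichotomy}. First I would invoke Theorem \ref{thm:Alpha1Unramified} with the specific value $\nu(K)=4$, which guarantees a real quadratic field $K$ with $3$-class rank $\varrho_3(K)=2$ in which the entire $3$-elementary class group capitulates in \emph{all four} members of the unramified quartet $N_1,\ldots,N_4$. Each such $N_i$ is, by the discussion preceding Conjecture \ref{cnj:Scholz}, the Galois closure of a non-Galois totally real cubic field $L_i$ and is unramified over $K$ with conductor $f=1$; this already produces cubic fields $L$ of the kind required in the statement.

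The heart of the argument is then to show that capitulation of the full class group forces $U_N=U_0$, equivalently $I=(U_N:U_0)=1$ and $E=0$ in Scholz's relation \eqref{eqn:Scholz}. Here I would apply Remark \ref{rmk:Inclusion}: for a dihedral field $N$ unramified over $K$ with $f=1$ we have $n=s=0$, hence $A=R=0$, and the trichotomic decomposition of Theorem \ref{thm:Trichotomy} collapses to $\mathcal{P}_{N/K}/\mathcal{P}_K\simeq\ker(T_{N/K})$. The fundamental equation \eqref{eqn:Dimensions} of Corollary \ref{cor:Trichotomy} then reads $U+1=C$, where $C=\dim_{\mathbb{F}_p}(\ker(T_{N/K}))$ is the $\mathbb{F}_3$-dimension of the capitulation kernel. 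When the complete $3$-elementary class group $\mathrm{Cl}_3(K)\simeq C_3\times C_3$ capitulates, the capitulation kernel is all of $\mathrm{Cl}_3(K)$, so $C=\varrho_3(K)=2$, whence $U=1$. Finally, by Iwasawa's isomorphism in \eqref{eqn:Herbrand}, $\mathrm{H}^0(G,U_N)\simeq U_K/N_{N/K}(U_N)$ has order $3^U$; but I should instead track $U$ through $\mathrm{H}^0$ directly, using that the exponent $E$ of Scholz coincides with the cohomological invariant governing whether the subfield units $U_0=\langle U_K,U_L,U_{L'},U_{L''}\rangle$ generate all of $U_N$, so that $E=0$ exactly in DPF type $\alpha_1$ of Theorem \ref{thm:MainReal} (the row with $A=R=0$, $C=2$). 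Thus total capitulation pins down type $\alpha_1$, which is precisely the distinguished case $U_N=U_0$.

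To make the corollary effective and to match the minimal discriminant claim, I would then cite the explicit computation: the value $\nu(K)=4$ first occurs at $d_K=62\,501$, but the \emph{smallest cubic discriminant} $d_L$ realizing the unramified situation with full capitulation in some member arises already at $\nu(K)=3$, namely $d_K=32\,009$, where three of the four Galois closures are of type $\alpha_1$. Since $f=1$ gives $d_L=d_K$ by formula \eqref{eqn:Hasse}, the minimal cubic discriminant is $d_L=32\,009$, and these three fields are exactly those first tabulated by Heider and Schmithals \cite{HeSm}.

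The main obstacle is the precise identification of Scholz's analytically-defined index $E$ (via the class-number relation \eqref{eqn:Scholz}) with the cohomologically-defined invariant $U$ appearing in \eqref{eqn:Herbrand} and in the DPF type tables. One must verify that $N_{N/K}(U_N)$ and the subfield-unit group $U_0$ interact so that $(U_N:U_0)=3^E$ vanishes precisely when the capitulation kernel is two-dimensional; this rests on the interplay between the Herbrand quotient of $U_N$, the torsion-free rank $r_1+r_2-\theta=1$ of the real quadratic $K$ (giving cohomological maximum $m=2$), and the genus-theoretic relation between capitulation and defect in the unit index. Once this bridge between Scholz's unit index and the Galois-cohomological trichotomy is secured, the remaining steps are immediate consequences of Theorem \ref{thm:Alpha1Unramified} and the numerical data of \cite{Ma2012,HeSm}.
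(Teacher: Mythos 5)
Your proposal is correct and follows essentially the same route as the paper: the paper's proof is simply a reduction to Theorem \ref{thm:Alpha1Unramified}, taking any $K$ with $1\le\nu(K)\le 4$ and selecting $\nu(K)=3$ at $d_K=32\,009$ to realize the minimal discriminant $d_L=d_K$. The additional bridge you build from full capitulation ($C=2$, $A=R=0$, hence $U=1$ and DPF type $\alpha_1$) to $U_N=U_0$ is exactly the content the paper delegates to \S\ref{s:Trichotomy} and to Moser's identification of type $\alpha$ with $(U_N:U_0)=1$, so it is a welcome expansion of an implicit step rather than a different argument.
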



\begin{figure}[h]
\caption{Hilbert Class Field over \(K\)}
\label{fig:HilbertClassFieldQuartet}

{\small

\setlength{\unitlength}{1.0cm}
\begin{picture}(10,5)(-12,-9.6)



\put(-8,-9){\circle*{0.2}}
\put(-8,-9.2){\makebox(0,0)[ct]{\(K\)}}

\put(-8,-9){\line(-2,1){2}}
\put(-8,-9){\line(2,1){2}}
\put(-6.8,-8.5){\makebox(0,0)[lt]{\(\lbrack N:K\rbrack=3\)}}

\put(-10,-8){\line(2,1){2}}
\put(-6,-8){\line(-2,1){2}}

\put(-8,-7.9){\makebox(0,0)[cb]{unramified quartet}}
\put(-10,-8){\circle*{0.2}}
\put(-8.7,-8){\circle*{0.2}}
\put(-7.3,-8){\circle*{0.2}}
\put(-6,-8){\circle*{0.2}}
\put(-5.8,-8){\makebox(0,0)[lc]{\(N\)}}

\put(-8,-7){\circle*{0.2}}

\put(-8,-7){\line(0,1){2}}

\put(-8,-5){\circle*{0.2}}
\put(-7.8,-5){\makebox(0,0)[lc]{\(\mathrm{F}_{3,1}(K)\)}}
\put(-7.8,-5.5){\makebox(0,0)[lc]{Hilbert \(3\)-class field of \(K\)}}


\end{picture}

}

\end{figure}
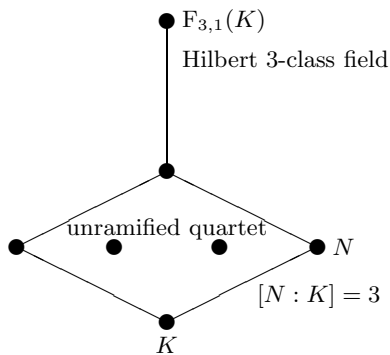


\begin{proof}
It suffices to take a real quadratic field \(K\) with \(1\le \nu(K)\le 4\)
in Theorem
\ref{thm:Alpha1Unramified}.
In view of the minimal discriminant, we select \(\nu(K)=3\)
and obtain \(U_N=U_0\) for \(d_L=d_K=32\,009\).
\end{proof}


\noindent
Concerning the \textit{ramified} situation  \(f>1\) in Conjecture
\ref{cnj:Scholz} (2),
Scholz does not explicitly impose any conditions
on the underlying real quadratic field \(K\).
We suppose that he also tacitly assumed
a real quadratic field \(K\) with \(3\)-class rank \(\varrho_3(K)=2\).
However, more recent extensions of the theory of dihedral fields
by means of \textit{differential principal factorizations} and \textit{Galois cohomology},
two concepts which we have expanded thoroughly in the preparatory sections \S\S\
\ref{ss:NormKernel},
\ref{ss:CapitulationKernel}, and
\ref{ss:GaloisCohomology},
revealed that for \(U_N=U_0\)
no constraints on the \(p\)-class rank \(\varrho_p(K)\) are required.
In \(1975\), Nicole Moser
\cite{Mo}
used the \textit{Galois cohomology}
\(\mathrm{H}^0(G,U_N)\simeq U_K/\mathrm{N}_{N/K}(U_N)\)
of the unit group \(U_N\) of the normal closure \(N\) as a module over \(G=\mathrm{Gal}(N/K)\)
to establish a \textit{fine structure} with five possible types \(\alpha,\beta,\gamma,\delta,\varepsilon\)
on the \textit{coarse} classification by three possible values of the index of subfield units: \\
\((U_N:U_0)=1\) \(\Longleftrightarrow\) type \(\alpha\) with \((U_K:\mathrm{N}_{N/K}(U_N))=3\), \\
\((U_N:U_0)=3\) \(\Longleftrightarrow\) type \(\beta\) with \((U_K:\mathrm{N}_{N/K}(U_N))=3\) or type \(\delta\) with \((U_K:\mathrm{N}_{N/K}(U_N))=1\), \\
\((U_N:U_0)=9\) \(\Longleftrightarrow\) type \(\gamma\) with \((U_K:\mathrm{N}_{N/K}(U_N))=3\) or type \(\varepsilon\) with \((U_K:\mathrm{N}_{N/K}(U_N))=1\). \\
Thus, Moser's refinement does not illuminate the situation \(U_N=U_0\) (\(\Longleftrightarrow\) type \(\alpha\)) of Scholz's conjecture more closely.
Meanwhile, Barrucand and Cohn
\cite{BaCo}
had coined the concept of \textit{differential principal factorization} (DPF) for pure cubic fields.
In \(1991\), we generalized the theory of DPFs for dihedral fields of both signatures
\cite{Ma1991},
and we obtained a \textit{hyperfine structure} by splitting Moser's types further
according to the \(\mathbb{F}_p\)-dimensions
\(C\) of the capitulation kernel \(\ker(T_{K,N})\) and
\(R\) of the space of relative DPFs of \(N/K\),
which we recalled in the preparatory section \S\
\ref{ss:QuinticTypes}.
In particular, type \(\alpha\) with \(U_N=U_0\) splits into three subtypes: \\
type \(\alpha_1\) \(\Longleftrightarrow\) \(C=2\), \(R=0\), which implies \(\varrho_p(K)\ge 2\), \\
type \(\alpha_2\) \(\Longleftrightarrow\) \(C=1\), \(R=1\), which implies \(\varrho_p(K)\ge 1\) and a split prime divisor of \(f\) \((s\ge 1)\), \\
type \(\alpha_3\) \(\Longleftrightarrow\) \(C=0\), \(R=2\), which is compatible with any \(\varrho_p(K)\ge 0\), but requires \(s\ge 2\).


Consequently, we were led to the following refinement of Conjecture
\ref{cnj:Scholz} (2).

\begin{conjecture}
\label{cnj:Mayer}
(Conjecture of D. C. Mayer, \(1991\)) \\
Non-Galois totally real cubic fields \(L\)
whose Galois closure \(N\) is \textit{ramified}, with conductor \(f>1\),
over some real quadratic field \(K\),
and is of type \(\alpha\), with \(U_N=U_0\),
should exist for each of the following three situations:
\begin{enumerate}
\item[(2.1)]
type \(\alpha_1\) with
\(\dim_{\mathbb{F}_3}(\ker(T_{K,N}))=2\) and \(\varrho_3(K)=2\), \(s=0\),
\item[(2.2)]
type \(\alpha_2\) with
\(\dim_{\mathbb{F}_3}(\ker(T_{K,N}))=1\) and \(\varrho_3(K)=1\), \(s=1\),
\item[(2.3)]
type \(\alpha_3\) with
\(\dim_{\mathbb{F}_3}(\ker(T_{K,N}))=0\) and \(\varrho_3(K)=0\), \(s=2\),
\end{enumerate}
where \(T_{K,N}:\,\mathrm{Cl}_3(K)\to\mathrm{Cl}_3(N)\), \(\mathfrak{a}\cdot\mathcal{P}_K\mapsto(\mathfrak{a}\mathcal{O}_N)\cdot\mathcal{P}_N\),
denotes the \textit{transfer homomorphism} of \(3\)-classes from \(K\) to \(N\),
and \(s\) counts the prime divisors of the conductor \(f\) which \textit{split} in \(K\).
\end{conjecture}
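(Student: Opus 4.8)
The plan is to prove the three existence statements (2.1)--(2.3) by the same two-step strategy that settled the unramified case in Theorem~\ref{thm:Alpha1Unramified} and Corollary~\ref{cor:Alpha1Unramified}: first reduce ``type~$\alpha$ with $U_N=U_0$'' to explicit arithmetic constraints on $\varrho_3(K)$ and on the splitting of the conductor, and then exhibit, for each subtype, a minimal concrete discriminant realizing it.

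For the reduction, recall that for a totally real dihedral field~$N$ of degree~$6$ the cohomological maximum is $m=2$, and Moser's equivalence $U_N=U_0\Longleftrightarrow(U_K:\mathrm{N}_{N/K}(U_N))=3$ \cite{Mo} means precisely $U=1$ and $A=0$ in the notation of \S\ref{ss:GaloisCohomology}. The fundamental equation $U+1=A+R+C$ of Corollary~\ref{cor:Trichotomy} then degenerates to $R+C=2$, so that type~$\alpha$ splits into exactly the three subtypes $(R,C)\in\{(2,0),(1,1),(0,2)\}$ listed as $\alpha_3,\alpha_2,\alpha_1$ in Theorem~\ref{thm:MainReal}. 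Feeding these values into the estimates $C\le\min(\varrho_3,2)$ and $R\le\min(s,2)$ of Corollary~\ref{cor:Estimates} yields the necessary conditions asserted in Conjecture~\ref{cnj:Mayer}: subtype $\alpha_1$ forces $\varrho_3(K)\ge 2$; subtype $\alpha_2$ forces $\varrho_3(K)\ge 1$ together with a split conductor prime, $s\ge 1$; and subtype $\alpha_3$ is compatible with $\varrho_3(K)=0$, which then forces $C=0$, but requires two split conductor primes, $s\ge 2$.

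The substantive part is to show that these conditions are not merely necessary but attainable. For each of the three subtypes I would scan real quadratic fields~$K$ of the prescribed $3$-class rank and, over each admissible conductor $f>1$ whose prime divisors display the required splitting behaviour, compute the DPF triplet $(A,R,C)$ of the associated ramified cyclic cubic extensions $N/K$---equivalently, verify $U_N=U_0$ directly through the index of subfield units $(U_N:U_0)$---until a field of the target type is located. By Hasse's formula~\eqref{eqn:Hasse} the corresponding non-Galois totally real cubic field~$L$ then has discriminant $d_L=f^2 d_K$, and recording one minimal such $d_L$ for each subtype completes the respective existence proof exactly as in Corollary~\ref{cor:Alpha1Unramified}.

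The main obstacle will be subtype $\alpha_3$ with $\varrho_3(K)=0$ and $s=2$. Since the $3$-class group of~$K$ is then trivial, there are no unramified cyclic cubic extensions to draw upon and the capitulation kernel necessarily vanishes; consequently the entire two-dimensional space of differential principal factors demanded by $U_N=U_0$ must materialize as relative principal factors contributed jointly by the two split primes of the conductor. Arranging that both split primes lie in the norm kernel $\ker(\mathrm{N}_{N/L})$ while the absolute dimension~$A$ stays zero is a delicate coincidence that occurs only sporadically, so this case will demand by far the most extensive computational search. The subtypes $\alpha_1$ and $\alpha_2$, which may recruit capitulation from a nontrivial $3$-class group, are comparatively accessible and should already be visible at moderate discriminants.
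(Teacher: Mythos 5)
Your proposal follows essentially the same route as the paper: the reduction of type $\alpha$ (i.e.\ $U=1$, $A=0$, hence $R+C=2$) to the three subtypes $(R,C)\in\{(2,0),(1,1),(0,2)\}$ via the fundamental equation $U+1=A+R+C$ of Corollary \ref{cor:Trichotomy} and the estimates $C\le\min(\varrho_3,2)$, $R\le\min(s,2)$ of Corollary \ref{cor:Estimates}, followed by a computational search for minimal discriminants $d_L=f^2\cdot d_K$ realizing each subtype, is exactly how Theorems \ref{thm:Alpha3Ramified}, \ref{thm:Alpha2Ramified} and \ref{thm:Alpha1Ramified} together with the tables of \S\S\ \ref{ss:Rank1}--\ref{ss:Rank2} verify the conjecture. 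The only divergence is your difficulty heuristic, which is inverted relative to the paper's findings: subtype $\alpha_3$ was in fact the easiest to locate ($d_L=146\,853$, already present in the 1991 list \cite{Ma}), whereas $\alpha_2$ and especially $\alpha_1$ required far larger discriminants ($966\,397$ and $18\,251\,060$) and were found only in 2017 by a targeted Magma search over ray class fields.
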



\begin{figure}[ht]
\caption{Ring Class Field modulo \(f=63=3^2\cdot 7\) over \(K\)}
\label{fig:RingClassFieldSingulet}

{\small

\setlength{\unitlength}{0.9cm}
\begin{picture}(10,4.5)(-4,-9.2)



\put(0,-9){\circle*{0.2}}
\put(0,-9.2){\makebox(0,0)[ct]{\(K\)}}

\put(0,-9){\line(0,1){1}}
\put(0.2,-8.5){\makebox(0,0)[lc]{\(\lbrack N:K\rbrack=3\)}}
\put(0,-8){\line(0,1){3}}

\put(-0.2,-7.9){\makebox(0,0)[rb]{ramified singulet}}
\put(0,-8){\circle{0.2}}
\put(0.2,-8){\makebox(0,0)[lc]{\(N\)}}

\put(0,-5){\circle*{0.2}}
\put(0.2,-5){\makebox(0,0)[lc]{\(\mathrm{F}_{3,f}(K)\)}}
\put(0.2,-5.5){\makebox(0,0)[lc]{\(3\)-ring class field mod \(f\) of \(K\)}}


\end{picture}

}

\end{figure}
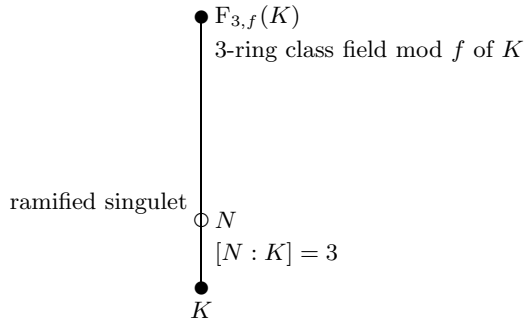


\begin{theorem}
\label{thm:Alpha3Ramified}
(Verification of Conjecture \ref{cnj:Mayer}, (2.3), and Conjecture \ref{cnj:Scholz}, (2); see Figure
\ref{fig:RingClassFieldSingulet}) \\
There exist non-Galois totally real cubic fields \(L\)
whose Galois closure \(N\) is ramified,
with conductor \(f>1\) divisible by two prime divisors which split in \(K\),
over a real quadratic field \(K\) with \(3\)-class rank \(\varrho_3(K)=0\),
without capitulation in \(N\),
but which nevertheless has \(U_N=U_0\).
The minimal discriminant of such a field \(L\) is \(d_L=146\,853=(7\cdot 9)^2\cdot 37\)
(singulet, \cite{Ma}, \(58\) years after \cite{So}).
\end{theorem}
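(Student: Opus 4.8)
The plan is to recast the three arithmetic conditions imposed on $N/K$ in the language of the differential principal factorization (DPF) types of Theorem \ref{thm:MainReal}, to read off the necessary structural constraints on the base field and the conductor from the cohomological estimates, and finally to exhibit one explicit totally real cubic field realizing the required type, confirming its invariants and its minimality by computation.

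First I would translate the three hypotheses $U_N=U_0$, $\varrho_3(K)=0$, and ``no capitulation'' into a single DPF type. Since $K$ is real quadratic, Corollary \ref{cor:Estimates} gives the cohomological maximum $m=2$, whence $U\le 1$ and, by the fundamental equation \eqref{eqn:Dimensions}, $A+R+C=U+1\le 2$. By Moser's coarse classification recalled in \S\ref{s:ScholzConjecture}, the condition $U_N=U_0$ is equivalent to type $\alpha$, that is to $(U_K:\mathrm{N}_{N/K}(U_N))=3$, hence to $U=1$ together with $A=0$. The vanishing $\varrho_3(K)=0$ forces $C\le\min(\varrho_3,m)=0$, so that capitulation is automatically excluded; substituting into \eqref{eqn:Dimensions} then gives $R=U+1-A-C=2$. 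Thus the hypotheses pin down exactly the fine type $\alpha_3$ with $(A,R,C)=(0,2,0)$ of Theorem \ref{thm:MainReal}, which is case (2.3) of Conjecture \ref{cnj:Mayer}.

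Next I would extract the constraint on the conductor. The estimate $R\le\min(s,m)=\min(s,2)$ of Corollary \ref{cor:Estimates} shows that the value $R=2$ is attainable only when $s\ge 2$, i.e.\ when at least two prime divisors of $f$ split in $K$. Hence any witness must be a real quadratic field $K$ with $\varrho_3(K)=0$ (so $h_K$ prime to $3$) together with a conductor $f>1$ possessing two split prime factors; the remaining content of the theorem is the assertion that such a configuration actually occurs with \emph{both} relative differential principal factors genuinely principal, so that $R$ attains its maximum $2$.

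The explicit witness is $K=\mathbb{Q}(\sqrt{37})$, which has class number one and hence $\varrho_3(K)=0$, together with conductor $f=63=3^2\cdot 7$; both rational primes $3$ and $7$ split in $K$, so $s=2$, and Hasse's formula \eqref{eqn:Hasse} yields the cubic discriminant $d_L=f^2 d_K=63^2\cdot 37=146\,853$. It then remains to compute the unit group of the sextic Galois closure $N$ and compare it with $U_0=\langle U_K,U_L,U_{L^\prime},U_{L^{\prime\prime}}\rangle$, confirming $(U_N:U_0)=1$ and thereby type $\alpha_3$; this realizes the type and establishes existence. I expect the main obstacle to be not this single verification but the \emph{minimality} assertion: one must traverse all totally real cubic discriminants below $146\,853$ and check that none of them yields a ramified Galois closure of type $\alpha$ over a base field of trivial $3$-class rank. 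This is a finite but substantial search, carried out with the tables and algorithms of \cite{Ma}, and it is where the computational weight of the proof lies.
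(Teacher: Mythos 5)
Your proposal is correct and follows essentially the same route as the paper: identify the hypotheses with DPF type \(\alpha_3\) (so that \(R=2\) forces \(s\ge 2\) by Corollary \ref{cor:Estimates}), exhibit the witness \(d_K=37\), \(f=63=3^2\cdot 7\), \(d_L=146\,853\), and appeal to the gapless computational list \cite{Ma} of all totally real cubic fields with \(d_L<200\,000\) for both the verification of \(U_N=U_0\) and the minimality. The paper's proof is terser (it simply cites the tabulation in \cite{Ma} and notes that \(s=2\) is necessary for \(R=2\)), but the logical content is the same.
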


\begin{proof}
This was proved in the numerical supplement
\cite{Ma}
of our paper
\cite{Ma1991}
by computing a gapless list of all \(10\,015\) totally real cubic fields \(L\)
with discriminants \(d_L<200\,000\)
on the AMDAHL mainframe of the University of Manitoba.
There occurred the minimal discriminant \(d_L=146\,853=f^2\cdot d_K\)
with \(d_K=37\) and conductor \(f=63=3^2\cdot 7\)
divisible by two primes which both split in \(K\), i.e. \(s=2\).
This is a necessary requirement for a two-dimensional relative principal factorization with \(R=2\)
and is unique up to \(d_L<200\,000\).
(The next is \(d_L=240\,149\) with \(f=7\cdot 13\).)
There is only a single field \(L\) with this discriminant \(d_L=146\,853\) (forming a singulet).
\end{proof}


Our discovery of the truth of Theorem
\ref{thm:Alpha3Ramified}
with the aid of the list
\cite{Ma}
was a random hit without explicit intention to find a verification of Scholz's conjecture.
Unfortunately,
\cite{Ma}
does not contain examples of the unique missing DPF type \(\alpha_2\).
It required more than \(25\) years until we focused on an attack against this lack of information.
In contrast to the techniques of
\cite{Ma},
we did not use the Voronoi algorithm
\cite{Vo}
after cumbersome preparation of generating polynomials for totally real cubic fields,
but rather the class field theory routines of Magma
\cite{BCP,BCFS,MAGMA}
for a direct generation of the fields as subfields of \(3\)-ray class fields modulo conductors \(f>1\).


\begin{theorem}
\label{thm:Alpha2Ramified}
(Verification of Conjecture \ref{cnj:Mayer}, (2.2), and Conjecture \ref{cnj:Scholz}, (2); see Figure
\ref{fig:RingClassFieldQuartet}) \\
There exist non-Galois totally real cubic fields \(L\)
whose Galois closure \(N\) is ramified,
with conductor \(f>1\) divisible by a single prime divisor that splits in \(K\), i.e. \(s=1\),
over a real quadratic field \(K\) with \(3\)-class rank \(\varrho_3(K)=1\),
with complete capitulation of the elementary \(3\)-class group in \(N\),
but nevertheless with \(U_N=U_0\).
The minimal discriminant of such a field \(L\) is \(d_L=966\,397=19^2\cdot 2\,677\)
(two of three fields, 19 November \(2017\), \(84\) years after \cite{So}, \(1933\)).
\end{theorem}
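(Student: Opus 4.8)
The plan is to reduce the three qualitative requirements---type $\alpha$ (that is $U_N=U_0$), complete capitulation, and a single split prime divisor of the conductor---to a finite checklist of computable arithmetic invariants, and then to exhibit the minimal field by an exhaustive class field theoretic search. First I would translate the hypotheses into the language of Corollary \ref{cor:Trichotomy}. Being of type $\alpha$, equivalently $U_N=U_0$, forces the cohomological index $U=1$ together with $A=0$, so the fundamental equation \eqref{eqn:Dimensions} collapses to $R+C=2$. Complete capitulation of the elementary $3$-class group of a field with $\varrho_3(K)=1$ means $C=1$, whence $R=1$; and by the estimates of Corollary \ref{cor:Estimates} a relative differential principal factor ($R\ge 1$) forces a split prime divisor of $f$, i.e. $s\ge 1$. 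The resulting triplet $(A,R,C)=(0,1,1)$ is exactly type $\alpha_2$ of Theorem \ref{thm:MainReal}, realized with the minimal data $\varrho_3(K)=1$ and $s=1$.

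Next I would organize the search. Enumerate the real quadratic fields $K=\mathbb{Q}(\sqrt{d})$ with $3$-class rank $\varrho_3(K)=1$, ordered by fundamental discriminant $d_K$, and for each of them run through the admissible prime conductors $q$ that split in $K$ (so that $s=1$), keeping $d_L=q^2\cdot d_K$ below the search bound according to \eqref{eqn:Hasse}. Using the ray class field routines of Magma, construct the cyclic cubic extension $N/K$ inside the $3$-ray class field modulo $q$ and extract its non-Galois totally real cubic subfield $L$; this is the direct generation announced in the discussion preceding the statement, which dispenses with the Voronoi algorithm.

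For every candidate I would then compute the trichotomy dimensions together with the unit index. The capitulation dimension $C$ is read off from the transfer $T_{K,N}\colon\mathrm{Cl}_3(K)\to\mathrm{Cl}_3(N)$; the dimensions $A$ and $R$ follow from the norm-kernel decomposition of Theorem \ref{thm:QuantitativeDichotomy} applied to the single split prime; and the index $E$, with $U_N=U_0\iff E=0$, is obtained through the Iwasawa--Hasse cohomology \eqref{eqn:Herbrand}, that is, by deciding whether the fundamental unit of $K$ is a relative norm from $N$. Filtering for the profile $(A,R,C)=(0,1,1)$ and sorting by $d_L$ then produces the asserted minimal field, with $d_L=966\,397=19^2\cdot 2\,677$, $d_K=2\,677$ and conductor $f=19$, the prime $19$ indeed splitting in $K$.

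The hardest step is the certified verification of the distinguished case $U_N=U_0$---the very case Scholz was unable to produce numerically. A direct determination of a fundamental system of units of the sextic field $N$, followed by a comparison with the subgroup $U_0=\langle U_K,U_L,U_{L'},U_{L''}\rangle$, is both delicate and computationally expensive, and I expect it to be the principal obstacle. The cohomological reformulation \eqref{eqn:Herbrand} is what renders it tractable, for it replaces the computation of $U_N$ by a single norm-membership test, $N_{N/K}(U_N)=U_K$, inside the quadratic field $K$; thus $E=0$ is certified as soon as the fundamental unit of $K$ is displayed as a relative norm.
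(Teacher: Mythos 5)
Your overall strategy coincides with the paper's: once one candidate $d_L=966\,397$ is in hand, the bound $f\le\sqrt{966\,397/229}\approx 64.9$ (using the minimal discriminant $d_K=229$ with $\varrho_3(K)=1$) reduces the problem to a finite Magma-based search, and membership in type $\alpha_2$ is decided by computing the profile $(A,R,C)=(0,1,1)$. Two points, however, need correction.

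The serious one is that your certification of $U_N=U_0$ is inverted. By Moser's dictionary as quoted in \S\ref{s:ScholzConjecture}, $(U_N:U_0)=1$ corresponds to type $\alpha$ \emph{with} $(U_K:\mathrm{N}_{N/K}(U_N))=3$, i.e.\ $U=1$: the fundamental unit of $K$ must \emph{fail} to be a relative norm of a unit of $N$. (Concretely, $\mathrm{N}_{N/K}(U_0)=\pm U_K^3$ already has index $3$ in $U_K$, so $U_N=U_0$ forces index $3$.) If instead $\varepsilon_K\in\mathrm{N}_{N/K}(U_N)$, then $U=0$, the Hasse--Iwasawa relation \eqref{eqn:Herbrand} gives $A+R+C=1$, and the field is of type $\delta$ or $\varepsilon$ with $(U_N:U_0)\in\{3,9\}$ --- exactly the fields you must discard. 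So the criterion ``$E=0$ is certified as soon as the fundamental unit is displayed as a relative norm'' would select precisely the wrong candidates. Moreover, even the correct inequality $(U_K:\mathrm{N}_{N/K}(U_N))=3$ only places $N$ in $\alpha\cup\beta\cup\gamma$; you still need $A=0$ to conclude $U_N=U_0$, so the reduction to ``a single norm-membership test'' overstates the simplification. Your middle paragraph's filter $(A,R,C)=(0,1,1)$ is the correct and sufficient test (it forces $U=1$ via $U+1=A+R+C$ and $A=0$), and it is what the paper's tables actually record.

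A secondary gap: you restrict the search to \emph{prime} conductors $q$ splitting in $K$. The theorem only requires that exactly one prime divisor of $f$ splits, and the minimality claim must also exclude composite conductors such as $f=3^2$, $2\cdot 7$, $3\cdot 7$, $\dots$, $7\cdot 3^2$; the paper's Table~\ref{tbl:Rank1Coarse} runs through all twenty admissible conductors $f\le 64$ containing a split prime, not just the primes $7,13,19,31,37,43,61$.
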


\begin{proof}
The proof is conducted in the following section \S\
\ref{ss:Rank1}.
\end{proof}

\begin{figure}[ht]
\caption{Heterogeneous Quartet modulo \(f=19\) over \(K\)}
\label{fig:RingClassFieldQuartet}

{\small

\setlength{\unitlength}{1.0cm}
\begin{picture}(10,6)(-8,-9)



\put(-5,-9){\circle*{0.2}}
\put(-5,-9.2){\makebox(0,0)[ct]{\(K\)}}

\put(-5,-9){\line(-1,1){1}}
\put(-5,-9){\line(1,1){1}}
\put(-5,-9){\line(5,1){5}}

\put(-4.1,-7.3){\makebox(0,0)[cb]{heterogeneous quartet}}
\put(-6.2,-7.8){\makebox(0,0)[rb]{unramified singulet}}
\put(-6,-8){\circle*{0.2}}

\put(-6,-8){\line(0,1){2}}

\put(-9.2,-4){\makebox(0,0)[rc]{(2.2)}}
\put(-6,-6){\circle*{0.2}}
\put(-6,-5.7){\makebox(0,0)[cb]{\(\mathrm{F}_{3,1}(K)\)}}
\put(-6.2,-6.2){\makebox(0,0)[rc]{Hilbert \(3\)-class field of \(K\)}}



\put(-4.2,-8.4){\makebox(0,0)[lc]{\(\lbrack N:K\rbrack=3\)}}
\put(-2,-4){\line(-2,-1){4}}
\put(-2,-4){\line(-1,-2){2}}
\put(-2,-4){\line(1,-2){2}}

\put(-2,-7.8){\makebox(0,0)[cb]{ramified triplet}}
\put(-4,-8){\circle{0.2}}
\put(-2,-8){\circle{0.2}}
\put(0,-8){\circle{0.2}}
\put(-4.2,-8){\makebox(0,0)[rc]{\(N\)}}

\put(-2,-4){\circle*{0.2}}
\put(-2,-3.8){\makebox(0,0)[cb]{\(\mathrm{F}_{3,f}(K)\)}}
\put(-1.8,-4.1){\makebox(0,0)[lc]{\(3\)-ring class field mod \(f\) of \(K\)}}


\end{picture}

}

\end{figure}
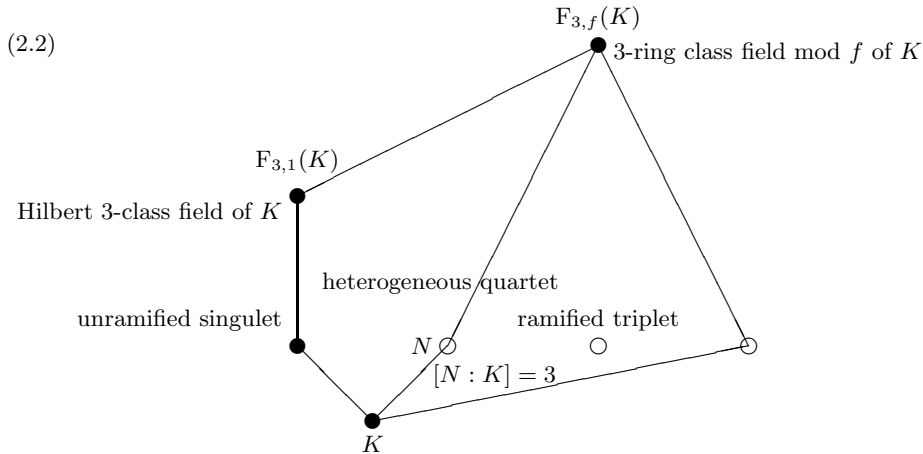


\begin{theorem}
\label{thm:Alpha1Ramified}
(Verification of Conjecture \ref{cnj:Mayer}, (2.1), and Conjecture \ref{cnj:Scholz}, (2); see Figure
\ref{fig:RingClassFieldTridecuplet}) \\
There exist non-Galois totally real cubic fields \(L\)
whose Galois closure \(N\) is ramified,
with conductor \(f>1\) divisible only by prime divisors which do not split in \(K\),
over a real quadratic field \(K\) with \(3\)-class rank \(\varrho_3(K)=2\),
with complete capitulation of the elementary \(3\)-class group in \(N\),
and thus with \(U_N=U_0\).
The minimal discriminant of such a field \(L\) is \(d_L=18\,251\,060=2^2\cdot 4\,562\,765\) \\
(five of nine fields, 23 November \(2017\), \(84\) years after \cite{So}, \(1933\)).
\end{theorem}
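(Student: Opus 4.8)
The plan is to prove this existence statement computationally, exactly parallel to the verification of type $\alpha_2$ in Theorem~\ref{thm:Alpha2Ramified}, by constructing the relevant ramified dihedral sextic fields $N$ directly as subfields of $3$-ray class fields over suitably chosen real quadratic base fields $K$. The type-theoretic target is pinned down by Corollary~\ref{cor:Trichotomy} and Corollary~\ref{cor:Estimates}: type $\alpha_1$ is the unique type with invariant triple $(A,R,C)=(0,0,2)$ and $U=1$, satisfying the fundamental equation $U+1=A+R+C$, namely $2=0+0+2$. Since $U_N=U_0$ is equivalent to type $\alpha$, that is, to $A=0$ together with $U=1$, it suffices to exhibit one pair $(K,N)$ realizing simultaneously $C=2$ and $R=0$; the vanishing of $A$ and the value $U=1$ then follow automatically from the trichotomy~\eqref{eqn:Trichotomy}.

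First I would prune the search space by means of the estimates in Corollary~\ref{cor:Estimates}. The demand $C=2$ forces $\varrho_3(K)\ge 2$ through the bound $C\le\min(\varrho_3,m)=\min(\varrho_3,2)$, so I take $K$ real quadratic of $3$-class rank exactly $\varrho_3(K)=2$, and for simplicity with elementary class group $\mathrm{Cl}_3(K)\simeq C_3\times C_3$. The demand $R=0$ I would enforce \emph{a priori} by requiring that every prime divisor of the conductor $f>1$ be non-split in $K$, so that $s=0$ and hence $R\le\min(s,m)=0$. Under these two structural constraints the only remaining freedom lies in the capitulation dimension $C$, and the desired type $\alpha_1$ is singled out precisely by \emph{complete} capitulation $C=2$ of the full two-dimensional $3$-class group.

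Next I would enumerate, by increasing dihedral discriminant $d_L=f^2\,d_K$ via Hasse's formula~\eqref{eqn:Hasse}, the real quadratic fields $K$ with $\varrho_3(K)=2$ together with the $3$-admissible conductors $f>1$ whose prime divisors do not split in $K$. For each such pair I would invoke the class field theory routines of Magma~\cite{BCP,BCFS,MAGMA} to build the $3$-ray class field modulo $f$, extract the ramified dihedral sextic fields $N$ with their non-Galois totally real cubic subfields $L$, and then compute the $3$-class extension homomorphism $T_{K,N}\colon\mathrm{Cl}_3(K)\to\mathrm{Cl}_3(N)$ in order to read off $\dim_{\mathbb{F}_3}\ker(T_{K,N})$. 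Whenever this dimension equals $2$, the already enforced $R=0$ together with the fundamental equation yields $A=0$, $U=1$, i.e.\ type $\alpha_1$ with $U_N=U_0$. Scanning upward, the smallest discriminant at which complete capitulation is realized under a purely non-split conductor is the common value $d_L=18\,251\,060$ shared by the nine cubic subfields of the ramified nonet, five of which turn out to carry type $\alpha_1$.

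The hard part will be the capitulation condition $C=2$. Whereas $R$ is forced to vanish structurally by the non-splitting hypothesis on $f$, complete capitulation of the entire two-dimensional $3$-class group is a delicate arithmetic phenomenon governed by the transfer kernel of the second $3$-class group $\mathrm{Gal}(\mathrm{F}_3^2(K)/K)$, and it occurs comparatively rarely. The genuine tension is that $C=2$ wants a rich, fully capitulating class group, while the prohibition of any split prime in $f$ simultaneously annihilates all relative differential principal factors; it is exactly this opposition that drives the minimal example to such a large discriminant. The principal obstacle is therefore practical rather than conceptual: the extent of the search region that must be swept before the first coincidence of complete capitulation with an inert or ramified conductor appears.
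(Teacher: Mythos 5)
Your proposal is correct and follows essentially the same route as the paper's proof in \S\ref{ss:Rank2}: a Magma-based search over real quadratic fields $K$ with $\varrho_3(K)=2$ and admissible conductors $f$ with non-split prime divisors, ordered by $d_L=f^2 d_K$, reading off $C=\dim_{\mathbb{F}_3}\ker(T_{K,N})=2$ and using the fundamental equation together with the bound $R\le s=0$ to force type $\alpha_1$ and $U_N=U_0$, then certifying minimality of $d_L=18\,251\,060$ by sweeping all conductors $f\le\sqrt{18\,251\,060/32\,009}\approx 23.9$. The paper adds only the heuristic remark that small $f$ is favoured because it enters $d_L$ quadratically (motivating the initial trial $f=2$), and records the full extended Artin patterns of the heterogeneous tridecuplets in its tables; otherwise your argument matches it step for step.
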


\begin{figure}[ht]
\caption{Heterogeneous Tridecuplet modulo \(f=2\) over \(K\)}
\label{fig:RingClassFieldTridecuplet}

{\small

\setlength{\unitlength}{1.0cm}
\begin{picture}(10,7)(-8,-9)



\put(-5,-9){\circle*{0.2}}
\put(-5,-9.2){\makebox(0,0)[ct]{\(K\)}}

\put(-5,-9){\line(-5,1){5}}
\put(-5,-9){\line(-1,1){1}}
\put(-5,-9){\line(1,1){1}}
\put(-5,-9){\line(5,1){5}}

\put(-10,-8){\line(2,1){2}}
\put(-6,-8){\line(-2,1){2}}

\put(-5.5,-7.3){\makebox(0,0)[cb]{heterogeneous tridecuplet}}
\put(-8,-7.9){\makebox(0,0)[cb]{unramified quartet}}
\put(-10,-8){\circle*{0.2}}
\put(-8.7,-8){\circle*{0.2}}
\put(-7.3,-8){\circle*{0.2}}
\put(-6,-8){\circle*{0.2}}

\put(-8,-7){\circle*{0.2}}

\put(-8,-7){\line(0,1){2}}

\put(-9.2,-3){\makebox(0,0)[rc]{(2.1)}}
\put(-8,-5){\circle*{0.2}}
\put(-8,-4.8){\makebox(0,0)[cb]{\(\mathrm{F}_{3,1}(K)\)}}
\put(-7.8,-5.2){\makebox(0,0)[lc]{Hilbert \(3\)-class field of \(K\)}}



\put(-4.2,-8.4){\makebox(0,0)[lc]{\(\lbrack N:K\rbrack=3\)}}
\put(0,-3){\line(-4,-1){8}}
\put(0,-3){\line(-4,-5){4}}
\put(0,-3){\line(0,-1){5}}

\put(-2,-7.8){\makebox(0,0)[cb]{ramified nonet}}
\put(-4,-8){\circle{0.2}}
\put(-3.5,-8){\circle{0.2}}
\put(-3,-8){\circle{0.2}}
\put(-2.5,-8){\circle{0.2}}
\put(-2,-8){\circle{0.2}}
\put(-1.5,-8){\circle{0.2}}
\put(-1,-8){\circle{0.2}}
\put(-0.5,-8){\circle{0.2}}
\put(0,-8){\circle{0.2}}
\put(-4.2,-8){\makebox(0,0)[rc]{\(N\)}}

\put(0,-3){\circle*{0.2}}
\put(0,-2.8){\makebox(0,0)[cb]{\(\mathrm{F}_{3,f}(K)\)}}
\put(0.2,-3.1){\makebox(0,0)[lc]{\(3\)-ring class field mod \(f\) of \(K\)}}


\end{picture}

}

\end{figure}
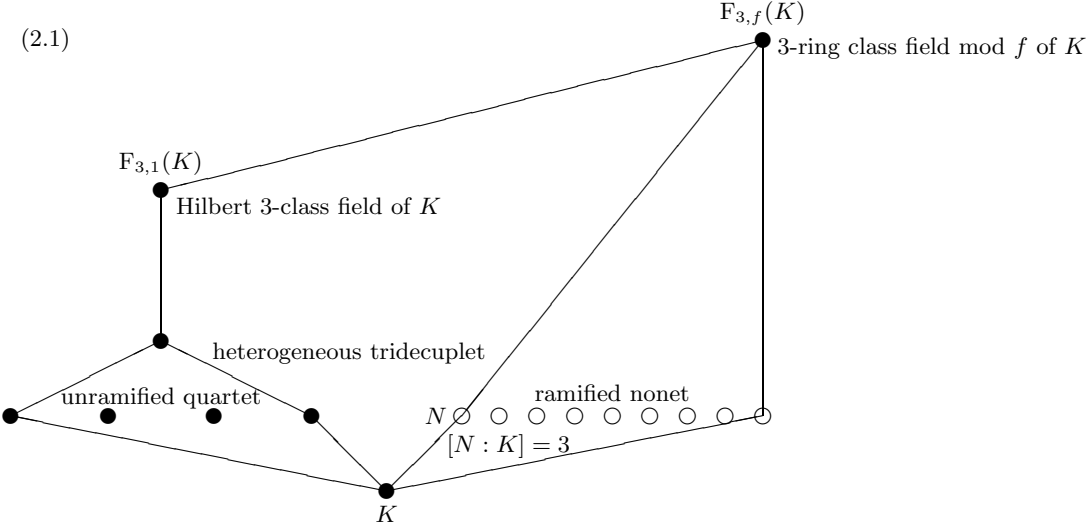

\begin{proof}
The proof is conducted in the following section \S\
\ref{ss:Rank2}.
\end{proof}

The proof of Theorem
\ref{thm:Alpha1Ramified}
and Theorem
\ref{thm:Alpha2Ramified}
is conducted in the following sections
on real quadratic base fields with \(3\)-class rank \(1\) and \(2\).

\subsection{Real quadratic base fields with \(3\)-class rank \(1\)}
\label{ss:Rank1}

\noindent
In Table
\ref{tbl:Rank1},
we present the results of our search
for the \textit{minimal discriminant} \(d_L\), resp. \(d_N\),
of a non-Galois totally real cubic field \(L\), resp. its normal closure \(N\),
with \textit{differential principal factorization type} \(\alpha_2\).
The unramified component is a \textit{singulet} which must be of DPF type \(\delta_1\).
For each member of the ramified \textit{triplet}
the DPF types \(\alpha_2,\beta_1,\beta_2,\delta_1,\delta_2,\varepsilon\) would be possible,
but only the types \(\alpha_2,\delta_1,\delta_2\) occur usually.

The desired minimum is clearly given by \(d_L=19^2\cdot 2\,677=966\,397\)
with two occurrences of ramified extensions with DPF type \(\alpha_2\).
For \(f=3^2\), the condition \(d_K\equiv 1\,(\mathrm{mod}\,3)\) is required.

\renewcommand{\arraystretch}{1.1}

\begin{table}[ht]
\caption{Heterogeneous Quartets of Dihedral Fields with a Splitting Prime \(f\)}
\label{tbl:Rank1}
\begin{center}
\begin{tabular}{|r|r|r||c||ccc|}
\hline
        &             &                      & unramified component & \multicolumn{3}{c|}{ramified components}   \\
 \(f\)  & \(d_K\)     & \(d_L=f^2\cdot d_K\) & \(\delta_1\)         & \(\alpha_2\) & \(\delta_1\) & \(\delta_2\) \\
\hline
\(3^2\) & \(14\,197\) &  \(1\,149\,957\)     & \(1\)                & \(3\)        & \(0\)        & \(0\)        \\
 \(7\)  & \(21\,781\) &  \(1\,067\,269\)     & \(1\)                & \(2\)        & \(1\)        & \(0\)        \\
 \(13\) &  \(9\,749\) &  \(1\,647\,581\)     & \(1\)                & \(2\)        & \(0\)        & \(1\)        \\
 \(19\) &  \(2\,677\) &     \(966\,397\)     & \(1\)                & \(2\)        & \(1\)        & \(0\)        \\
 \(31\) &  \(3\,877\) &  \(3\,725\,797\)     & \(1\)                & \(2\)        & \(0\)        & \(1\)        \\
 \(37\) &  \(5\,477\) &  \(7\,498\,013\)     & \(1\)                & \(1\)        & \(0\)        & \(2\)        \\
 \(43\) &  \(4\,933\) &  \(9\,121\,117\)     & \(1\)                & \(3\)        & \(0\)        & \(0\)        \\
 \(61\) &  \(3\,981\) & \(14\,813\,301\)     & \(1\)                & \(3\)        & \(0\)        & \(0\)        \\
 \(67\) &  \(4\,493\) & \(20\,169\,077\)     & \(1\)                & \(2\)        & \(0\)        & \(1\)        \\
 \(73\) & \(10\,733\) & \(57\,196\,157\)     & \(1\)                & \(3\)        & \(0\)        & \(0\)        \\
\hline
\end{tabular}
\end{center}
\end{table}


\noindent
Since we know a small candidate \(d_L=966\,397\) for the minimal discriminant,
and since the smallest quadratic discriminant with \(\varrho_3(K)=1\) is \(d_K=229\),
we only have to investigate prime and composite conductors \(f=\sqrt{\frac{d_L}{d_K}}\) with \(s\ge 1\) and
\[f\le\sqrt{\frac{966\,397}{229}}\approx\sqrt{4220}\approx 64.9,\]
which are divisible by a split prime, that is,
\[f\in\lbrace 7,9=3^2,13,14=2\cdot 7,18=2\cdot 3^2,19,21=3\cdot 7,26=2\cdot 13,31,35=5\cdot 7,37,\]
\[38=2\cdot 19,39=3\cdot 13,42=2\cdot 3\cdot 7,43,45=5\cdot 3^2,57=3\cdot 19,61,62=2\cdot 31,63=7\cdot 3^2\rbrace.\]

{\normalsize

\renewcommand{\arraystretch}{1.1}

\begin{table}[ht]
\caption{Heterogeneous Quartets of Dihedral Fields with Conductor \(f\)}
\label{tbl:Rank1Coarse}
\begin{center}
\begin{tabular}{|r|c||r|r||c||ccccc|}
\hline
                    &                      &             &                      & unramified component & \multicolumn{5}{c|}{ramified components}                               \\
 \(f\)              & condition            & \(d_K\)     & \(d_L=f^2\cdot d_K\) & \(\delta_1\)         & \(\alpha_2\) & \(\beta_1\) & \(\beta_2\) & \(\delta_1\) & \(\delta_2\) \\
\hline
              \(7\) &                      & \(21\,781\) &      \(1\,067\,269\) & \(1\)                & \(2\)        & \(0\)       & \(0\)       & \(1\)        & \(0\)        \\
            \(3^2\) & \(d_K\equiv 1\,(3)\) & \(14\,197\) &      \(1\,149\,957\) & \(1\)                & \(3\)        & \(0\)       & \(0\)       & \(0\)        & \(0\)        \\
             \(13\) &                      &  \(9\,749\) &      \(1\,647\,581\) & \(1\)                & \(2\)        & \(0\)       & \(0\)       & \(0\)        & \(1\)        \\
             \(19\) &                      &  \(2\,677\) &         \(966\,397\) & \(1\)                & \(2\)        & \(0\)       & \(0\)       & \(1\)        & \(0\)        \\
             \(31\) &                      &  \(3\,877\) &      \(3\,725\,797\) & \(1\)                & \(2\)        & \(0\)       & \(0\)       & \(0\)        & \(1\)        \\
             \(37\) &                      &  \(5\,477\) &      \(7\,498\,013\) & \(1\)                & \(1\)        & \(0\)       & \(0\)       & \(0\)        & \(2\)        \\
             \(43\) &                      &  \(4\,933\) &      \(9\,121\,117\) & \(1\)                & \(3\)        & \(0\)       & \(0\)       & \(0\)        & \(0\)        \\
             \(61\) &                      &  \(3\,981\) &     \(14\,813\,301\) & \(1\)                & \(3\)        & \(0\)       & \(0\)       & \(0\)        & \(0\)        \\
\hline
       \(2\cdot 7\) &                      &  \(6\,997\) &      \(1\,371\,412\) & \(1\)                & \(3\)        & \(0\)       & \(0\)       & \(0\)        & \(0\)        \\
     \(2\cdot 3^2\) & \(d_K\equiv 1\,(3)\) & \(16\,141\) &      \(5\,229\,684\) & \(1\)                & \(3\)        & \(0\)       & \(0\)       & \(0\)        & \(0\)        \\
       \(3\cdot 7\) & \(d_K\equiv 3\,(9)\) & \(28\,137\) &     \(12\,408\,417\) & \(1\)                & \(3\)        & \(0\)       & \(0\)       & \(0\)        & \(0\)        \\
       \(3\cdot 7\) & \(d_K\equiv 6\,(9)\) & \(57\,516\) &     \(25\,364\,556\) & \(1\)                & \(3\)        & \(0\)       & \(0\)       & \(0\)        & \(0\)        \\
      \(2\cdot 13\) &                      & \(21\,557\) &     \(14\,572\,532\) & \(1\)                & \(3\)        & \(0\)       & \(0\)       & \(0\)        & \(0\)        \\
       \(5\cdot 7\) &                      & \(14\,457\) &     \(17\,709\,825\) & \(1\)                & \(3\)        & \(0\)       & \(0\)       & \(0\)        & \(0\)        \\
      \(2\cdot 19\) &                      & \(13\,765\) &     \(19\,876\,660\) & \(1\)                & \(3\)        & \(0\)       & \(0\)       & \(0\)        & \(0\)        \\
      \(3\cdot 13\) & \(d_K\equiv 3\,(9)\) & \(51\,528\) &     \(78\,374\,088\) & \(1\)                & \(3\)        & \(0\)       & \(0\)       & \(0\)        & \(0\)        \\
      \(3\cdot 13\) & \(d_K\equiv 6\,(9)\) & \(37\,176\) &     \(56\,544\,696\) & \(1\)                & \(3\)        & \(0\)       & \(0\)       & \(0\)        & \(0\)        \\
\(2\cdot 3\cdot 7\) & \(d_K\equiv 3\,(9)\) &\(891\,237\) & \(1\,572\,142\,068\) & \(1\)                & \(4\)        & \(1\)       & \(1\)       & \(0\)        & \(0\)        \\
\(2\cdot 3\cdot 7\) & \(d_K\equiv 6\,(9)\) &\(474\,261\) &    \(836\,596\,404\) & \(1\)                & \(2\)        & \(0\)       & \(1\)       & \(0\)        & \(0\)        \\
     \(5\cdot 3^2\) & \(d_K\equiv 1\,(3)\) & \(24\,952\) &     \(50\,527\,800\) & \(1\)                & \(1\)        & \(0\)       & \(0\)       & \(1\)        & \(1\)        \\
      \(3\cdot 19\) & \(d_K\equiv 3\,(9)\) & \(24\,393\) &     \(79\,252\,857\) & \(1\)                & \(3\)        & \(0\)       & \(0\)       & \(0\)        & \(0\)        \\
      \(3\cdot 19\) & \(d_K\equiv 6\,(9)\) & \(39\,417\) &    \(128\,065\,833\) & \(1\)                & \(3\)        & \(0\)       & \(0\)       & \(0\)        & \(0\)        \\
      \(2\cdot 31\) &                      &  \(7\,573\) &     \(29\,110\,612\) & \(1\)                & \(3\)        & \(0\)       & \(0\)       & \(0\)        & \(0\)        \\
     \(7\cdot 3^2\) & \(d_K\equiv 1\,(3)\) &  \(2\,941\) &     \(11\,672\,829\) & \(1\)                & \(3\)        & \(0\)       & \(0\)       & \(0\)        & \(0\)        \\
     \(7\cdot 3^2\) & \(d_K\equiv 2\,(3)\) & \(23\,993\) &     \(95\,228\,217\) & \(1\)                & \(3\)        & \(0\)       & \(0\)       & \(0\)        & \(0\)        \\
\hline
\end{tabular}
\end{center}
\end{table}

}

\noindent
The result of the investigations is summarized in Table
\ref{tbl:Rank1Coarse},
which clearly shows that \(d_L=\mathbf{966\,397}\), for \(d_K=2\,677\) and splitting prime conductor \(f=19\)
bigger than the conductor \(f=1\) of unramified extensions \(N/K\),
is the desired \textbf{minimal discriminant} of a totally real cubic field with
ramified extension \(N/K\), DPF type \(\alpha_2\) and \(U_N=U_0\).
The information has been computed with class field theoretic routines of Magma
\cite{MAGMA}.


\subsection{Real quadratic base fields with \(3\)-class rank \(2\)}
\label{ss:Rank2}
\noindent
In this situation, the unramified \textit{quartet} is non-trivial,
since the two DPF types \(\alpha_1\) and \(\delta_1\) are possible.
These quartets have been studied thoroughly in
\cite{Ma2012},
and in the Tables
\ref{tbl:Rank2Cond2}
and
\ref{tbl:Rank2Cond5},
we use the corresponding notation for \textit{capitulation types}.

In Table
\ref{tbl:Rank2Cond2},
we present the results of the crucial search
for the \textit{minimal discriminant} \(d_L\), resp. \(d_N\),
of a non-Galois totally real cubic field \(L\), resp. its normal closure \(N\),
with \textit{differential principal factorization type} \(\alpha_1\)
such that \(N/K\) is a \textit{ramified} extension of a
real quadratic field \(K\) with \(3\)-class rank \(\varrho_3=2\).
We tried to fix the minimal possible conductor \(f>1\), namely \(f=2\).
This experiment was motivated by the fact that
the conductor \(f\) enters the expression \(d_L=f^2\cdot d_K\) in its second power,
whereas the quadratic discriminant \(d_K\) enters linearly.
Consequently, the probability to find the minimum of \(d_L\)
is higher for small \(f\) than for small \(d_K\).

The table is ordered by increasing quadratic fundamental discriminants \(d_K\)
and gives \(d_L=2^2\cdot d_K\) and the \textit{extended Artin pattern}
of the \textit{heterogeneous tridecuplet} of cyclic cubic relative extensions \(N/K\)
consisting of an \textit{unramified quartet} \((N_{1,1},\ldots,N_{1,4})\) with conductor \(f^\prime=1\)
and a \textit{ramified nonet} \((N_{2,1},\ldots,N_{2,9})\) with conductor \(f=2\),
grouped by the possible two, resp. four, DPF types.
Capitulation kernels \(\varkappa\) are abbreviated by digits,
\(0\) for two-dimensional and
\(1,\ldots,4\) for one-dimensional principalization,
and an asterisk \(\ast\) for a trivial kernel.
Transfer targets \(\tau\) are abbreviated
by logarithmic abelian type invariants of \(3\)-class groups.
Symbolic exponents always denote repetition.

The desired minimum is given by \(d_L=4\cdot 4\,562\,765=18\,251\,060\)
with five occurrences of ramified extensions with DPF type \(\alpha_1\).
Generally,
there is an abundance of ramified extensions with two-dimensional capitulation kernel:
at least three and at most all nine of a nonet.


\renewcommand{\arraystretch}{1.1}

\begin{table}[ht]
\caption{Heterogeneous multiplets of Artin patterns for \(f=2\)}
\label{tbl:Rank2Cond2}
\begin{center}
\begin{tabular}{|r||l|cc|cc||cc|cc|cc|cc|}
\hline
                 & \multicolumn{5}{c||}{unramified components} & \multicolumn{8}{c|}{ramified components} \\
                 &      & \multicolumn{2}{c|}{\(\alpha_1\)} & \multicolumn{2}{c||}{\(\delta_1\)} & \multicolumn{2}{c|}{\(\alpha_1\)} & \multicolumn{2}{c|}{\(\beta_1\)} & \multicolumn{2}{c|}{\(\delta_1\)} & \multicolumn{2}{c|}{\(\varepsilon\)} \\
         \(d_K\) & Type & \(\varkappa\) & \(\tau\)          & \(\varkappa\) & \(\tau\)           & \(\varkappa\) & \(\tau\)          & \(\varkappa\) & \(\tau\)         & \(\varkappa\) & \(\tau\)          & \(\varkappa\) & \(\tau\)             \\
\hline
 \(4\,562\,765\) & a.\(3^\ast\) & \(0^3\) & \((1^2)^3\) & \(1\)    & \(1^3\)        & \(0^5\) & \(2^21^2,(1^4)^4\) & \(1\) & \(1^5\)  & \(14^2\)    & \((21^3)^3\)     &       &          \\
 \(7\,339\,397\) & a.\(3^\ast\) & \(0^3\) & \((1^2)^3\) & \(1\)    & \(1^3\)        & \(0^7\) & \((1^4)^7\)        & \(2\) & \(21^3\) & \(1\)       & \(21^3\)         &       &          \\
 \(7\,601\,461\) & a.\(3\)      & \(0^3\) & \((1^2)^3\) & \(1\)    & \(21\)         & \(0^6\) & \(2^21^2,(1^4)^5\) &       &          & \(234\)     & \((21^3)^3\)     &       &          \\
 \(7\,657\,037\) & a.\(3\)      & \(0^3\) & \((1^2)^3\) & \(1\)    & \(21\)         & \(0^6\) & \((1^4)^6\)        & \(1\) & \(21^3\) & \(12\)      & \(1^5,21^3\)     &       &          \\
 \(7\,736\,749\) & a.\(3^\ast\) & \(0^3\) & \((1^2)^3\) & \(1\)    & \(1^3\)        & \(0^7\) & \((1^4)^7\)        &       &          & \(4^2\)     & \((21^3)^2\)     &       &          \\
 \(8\,102\,053\) & a.\(3^\ast\) & \(0^3\) & \((1^2)^3\) & \(1\)    & \(1^3\)        & \(0^7\) & \((1^4)^7\)        &       &          & \(23\)      & \(1^5,21^3\)     &       &          \\
 \(9\,182\,229\) & a.\(2\)      & \(0^3\) & \((1^2)^3\) & \(4\)    & \(21\)         & \(0^8\) & \(2^21^2,(1^4)^7\) &       &          & \(2\)       & \(21^3\)         &       &          \\
 \(9\,500\,453\) & a.\(3\)      & \(0^3\) & \((1^2)^3\) & \(1\)    & \(21\)         & \(0^8\) & \(2^21^2,(1^4)^7\) &       &          & \(3\)       & \(21^3\)         &       &          \\
 \(9\,533\,357\) & a.\(3\)      & \(0^3\) & \((1^2)^3\) & \(1\)    & \(21\)         & \(0^6\) & \((1^4)^6\)        & \(1\) & \(21^3\) & \(23\)      & \((21^3)^2\)     &       &          \\
\(11\,003\,845\) & a.\(3\)      & \(0^3\) & \((1^2)^3\) & \(1\)    & \(21\)         & \(0^4\) & \((1^4)^4\)        &       &          & \(12^24^2\) & \(1^5,(21^3)^4\) &       &          \\
\(12\,071\,253\) & a.\(3\)      & \(0^3\) & \((1^2)^3\) & \(1\)    & \(21\)         & \(0^7\) & \((1^4)^7\)        & \(3\) & \(21^3\) & \(2\)       & \(21^3\)         &       &          \\
\(14\,266\,853\) & a.\(3\)      & \(0^3\) & \((1^2)^3\) & \(1\)    & \(21\)         & \(0^8\) & \(2^21^2,(1^4)^7\) &       &          & \(4\)       & \(21^3\)         &       &          \\
\(14\,308\,421\) & a.\(3^\ast\) & \(0^3\) & \((1^2)^3\) & \(1\)    & \(1^3\)        & \(0^4\) & \((1^4)^4\)        &       &          & \(1^2234\)  & \(2^31,(21^3)^3,1^5\) &  &          \\
\(14\,315\,765\) & a.\(3\)      & \(0^3\) & \((1^2)^3\) & \(1\)    & \(21\)         & \(0^7\) & \((1^4)^7\)        &       &          & \(23\)      & \((21^3)^2\)     &       &          \\
\(14\,395\,013\) & a.\(3^\ast\) & \(0^3\) & \((1^2)^3\) & \(1\)    & \(1^3\)        & \(0^6\) & \((1^4)^6\)        & \(1\) & \(21^3\) & \(23\)      & \((21^3)^2\)     &       &          \\
\(15\,131\,149\) & D.\(10\)     &         &             & \(2414\) & \((21)^3,1^3\) & \(0^7\) & \((1^4)^7\)        & \(1\) & \(21^3\) & \(1\)       & \(21^3\)         &       &          \\
\(16\,385\,741\) & a.\(3^\ast\) & \(0^3\) & \((1^2)^3\) & \(1\)    & \(1^3\)        & \(0^4\) & \((1^4)^4\)        &       &          & \(23^24\)   & \((21^3)^4\)     & \(\ast\) & \(32^21\) \\
\hline
\end{tabular}
\end{center}
\end{table}


\noindent
Table
\ref{tbl:Rank2Cond5}
shows analogous results for the conductor \(f=5\),
that is, \(d_L=5^2\cdot d_K\).
The minimum \(d_L=25\cdot 1\,049\,512=26\,237\,800\)
is clearly beaten by the minimum \(4\cdot 4\,562\,765=18\,251\,060\) in Table
\ref{tbl:Rank2Cond2}.


\renewcommand{\arraystretch}{1.1}

\begin{table}[ht]
\caption{Heterogeneous multiplets of Artin patterns for \(f=5\)}
\label{tbl:Rank2Cond5}
\begin{center}
\begin{tabular}{|r||l|cc|cc||cc|cc|cc|cc|}
\hline
                 & \multicolumn{5}{c||}{unramified components} & \multicolumn{8}{c|}{ramified components} \\
                 &      & \multicolumn{2}{c|}{\(\alpha_1\)} & \multicolumn{2}{c||}{\(\delta_1\)} & \multicolumn{2}{c|}{\(\alpha_1\)} & \multicolumn{2}{c|}{\(\beta_1\)} & \multicolumn{2}{c|}{\(\delta_1\)} & \multicolumn{2}{c|}{\(\varepsilon\)} \\
         \(d_K\) & Type & \(\varkappa\) & \(\tau\)          & \(\varkappa\) & \(\tau\)           & \(\varkappa\) & \(\tau\)          & \(\varkappa\) & \(\tau\)         & \(\varkappa\) & \(\tau\)          & \(\varkappa\) & \(\tau\)             \\
\hline
 \(1\,049\,512\) & a.\(3\)      & \(0^3\) & \((1^2)^3\) & \(1\)    & \(21\)         & \(0^4\) & \((1^4)^4\)            &       &          & \(234^3\) & \((21^3)^5\) &       &          \\
 \(2\,461\,537\) & a.\(2\)      & \(0^3\) & \((1^2)^3\) & \(4\)    & \(21\)         & \(0^7\) & \((1^4)^7\)            &       &          & \(12\)    & \((21^3)^2\) &       &          \\
 \(2\,811\,613\) & a.\(3^\ast\) & \(0^3\) & \((1^2)^3\) & \(1\)    & \(1^3\)        & \(0^5\) & \(2^21^2,(1^4)^4\)     & \(2\) & \(21^3\) & \(123\)   & \((21^3)^3\) &       &          \\
 \(3\,091\,133\) & a.\(3\)      & \(0^3\) & \((1^2)^3\) & \(1\)    & \(21\)         & \(0^4\) & \((1^4)^4\)            & \(4\) & \(21^3\) & \(1^32\)  & \((21^3)^4\) &       &          \\
 \(5\,858\,753\) & G.\(19\)     &         &             & \(2143\) & \((21)^4\)     & \(0^7\) & \((2^21^2)^3,(1^4)^4\) &       &          & \(3\)     & \(21^3\)     & \(\ast\) & \(21^4\)  \\
 \(6\,036\,188\) & D.\(10\)     &         &             & \(3431\) & \(1^3,(21)^3\) & \(0^8\) & \((1^4)^8\)            &       &          &           &              & \(\ast\) & \(2^21^2\) \\
\hline
\end{tabular}
\end{center}
\end{table}


\noindent
Since we know a small candidate \(d_L=18\,251\,060\) for the minimal discriminant,
and since the smallest quadratic discriminant with \(\varrho_3(K)=2\) is \(d_K=32\,009\),
we only have to investigate prime and composite conductors \(f=\sqrt{\frac{d_L}{d_K}}\) with
\[f\le\sqrt{\frac{18\,251\,060}{32\,009}}\approx\sqrt{570.2}\approx 23.9,\]
that is,
\[f\in\lbrace 2,3,5,6=2\cdot 3,7,9=3^2,10=2\cdot 5,11,13,14=2\cdot 7,\]
\[15=3\cdot 5,17,18=2\cdot 3^2,19,21=3\cdot 7,22=2\cdot 11,23\rbrace.\]

{\normalsize

\renewcommand{\arraystretch}{1.1}

\begin{table}[ht]
\caption{Heterogeneous tridecuplets of dihedral fields with conductor \(f\)}
\label{tbl:Rank2Coarse}
\begin{center}
\begin{tabular}{|r|c||r|r||cc||cccc|}
\hline
               &                      &                 &                      & \multicolumn{2}{c||}{unramified components} & \multicolumn{4}{c|}{ramified components} \\
 \(f\)         & condition            & \(d_K\)         & \(d_L=f^2\cdot d_K\) & \(\alpha_1\) & \(\delta_1\) & \(\alpha_1\) & \(\beta_1\) & \(\delta_1\) & \(\varepsilon\) \\
\hline
         \(2\) &                      & \(4\,562\,765\) &     \(18\,251\,060\) & \(3\)        & \(1\)        & \(5\)        & \(1\)       & \(3\)        & \(0\)           \\
         \(3\) & \(d_K\equiv 3\,(9)\) & \(9\,964\,821\) &     \(89\,683\,389\) & \(3\)        & \(1\)        & \(4\)        & \(0\)       & \(4\)        & \(1\)           \\
         \(5\) &                      & \(1\,049\,512\) &     \(26\,237\,800\) & \(3\)        & \(1\)        & \(4\)        & \(0\)       & \(5\)        & \(0\)           \\
         \(7\) &                      &    \(966\,053\) &     \(47\,336\,597\) & \(3\)        & \(1\)        & \(4\)        & \(0\)       & \(4\)        & \(1\)           \\
       \(3^2\) & \(d_K\equiv 1\,(3)\) & \(1\,482\,568\) &    \(120\,088\,008\) & \(3\)        & \(1\)        & \(5\)        & \(1\)       & \(2\)        & \(1\)           \\
       \(3^2\) & \(d_K\equiv 2\,(3)\) & \(2\,515\,388\) &    \(203\,746\,428\) & \(3\)        & \(1\)        & \(6\)        & \(1\)       & \(2\)        & \(0\)           \\
       \(3^2\) & \(d_K\equiv 6\,(9)\) &    \(621\,429\) &     \(50\,335\,749\) & \(3\)        & \(1\)        & \(6\)        & \(0\)       & \(3\)        & \(0\)           \\
        \(11\) &                      &    \(476\,152\) &     \(57\,614\,392\) & \(3\)        & \(1\)        & \(7\)        & \(0\)       & \(2\)        & \(0\)           \\
        \(13\) &                      & \(1\,122\,573\) &    \(189\,714\,837\) & \(3\)        & \(1\)        & \(7\)        & \(0\)       & \(2\)        & \(0\)           \\
        \(17\) &                      &    \(665\,832\) &    \(192\,425\,848\) & \(3\)        & \(1\)        & \(7\)        & \(0\)       & \(2\)        & \(0\)           \\
        \(19\) &                      &    \(635\,909\) &    \(229\,563\,149\) & \(3\)        & \(1\)        & \(5\)        & \(3\)       & \(1\)        & \(0\)           \\
        \(23\) &                      &    \(390\,876\) &    \(206\,773\,404\) & \(3\)        & \(1\)        & \(7\)        & \(1\)       & \(1\)        & \(0\)           \\
\hline
  \(2\cdot 3\) & \(d_K\equiv 3\,(9)\) & \(5\,963\,493\) &    \(214\,685\,748\) & \(3\)        & \(1\)        & \(7\)        & \(2\)       & \(0\)        & \(0\)           \\
  \(2\cdot 3\) & \(d_K\equiv 6\,(9)\) & \(4\,305\,957\) &    \(155\,014\,452\) & \(0\)        & \(4\)        & \(6\)        & \(3\)       & \(0\)        & \(0\)           \\
  \(2\cdot 5\) &                      &    \(363\,397\) &     \(36\,339\,700\) & \(3\)        & \(1\)        & \(6\)        & \(3\)       & \(0\)        & \(0\)           \\
  \(2\cdot 7\) &                      &    \(358\,285\) &     \(70\,223\,860\) & \(4\)        & \(0\)        & \(7\)        & \(2\)       & \(0\)        & \(0\)           \\
  \(3\cdot 5\) & \(d_K\equiv 3\,(9)\) & \(4\,845\,432\) & \(1\,090\,222\,200\) & \(3\)        & \(1\)        & \(6\)        & \(3\)       & \(0\)        & \(0\)           \\
  \(3\cdot 5\) & \(d_K\equiv 6\,(9)\) & \(1\,646\,817\) &    \(370\,533\,825\) & \(3\)        & \(1\)        & \(6\)        & \(3\)       & \(0\)        & \(0\)           \\
\(2\cdot 3^2\) & \(d_K\equiv 1\,(3)\) & \(2\,142\,445\) &    \(694\,152\,180\) & \(3\)        & \(1\)        & \(6\)        & \(3\)       & \(0\)        & \(0\)           \\
\(2\cdot 3^2\) & \(d_K\equiv 2\,(3)\) &    \(635\,909\) &    \(206\,034\,516\) & \(3\)        & \(1\)        & \(6\)        & \(3\)       & \(0\)        & \(0\)           \\
\(2\cdot 3^2\) & \(d_K\equiv 6\,(9)\) & \(2\,538\,285\) &    \(822\,404\,340\) & \(3\)        & \(1\)        & \(6\)        & \(3\)       & \(0\)        & \(0\)           \\
  \(3\cdot 7\) & \(d_K\equiv 3\,(9)\) & \(3\,597\,960\) & \(1\,586\,700\,360\) & \(3\)        & \(1\)        & \(6\)        & \(3\)       & \(0\)        & \(0\)           \\
  \(3\cdot 7\) & \(d_K\equiv 6\,(9)\) & \(3\,122\,232\) & \(1\,376\,904\,312\) & \(0\)        & \(4\)        & \(6\)        & \(3\)       & \(0\)        & \(0\)           \\
 \(2\cdot 11\) &                      & \(2\,706\,373\) & \(1\,309\,884\,532\) & \(3\)        & \(1\)        & \(6\)        & \(3\)       & \(0\)        & \(0\)           \\
\hline
\end{tabular}
\end{center}
\end{table}

}

\noindent
The result of the investigations is summarized in Table
\ref{tbl:Rank2Coarse},
which clearly shows that \(d_L=\mathbf{18\,251\,060}\), for \(d_K=4\,562\,765\) and the smallest possible conductor \(f=2\)
bigger than the conductor \(f=1\) of unramified extensions \(N/K\),
is the desired \textbf{minimal discriminant} of a totally real cubic field with
ramified extension \(N/K\), DPF type \(\alpha_1\) and \(U_N=U_0\).
The information has been computed with class field theoretic routines of Magma
\cite{MAGMA}.


\section{Acknowledgements}
\label{s:Thanks}

\noindent
We gratefully acknowledge that our research was supported by the Austrian Science Fund (FWF):
projects J 0497-PHY and P 26008-N25.



\end{document}